\documentclass[11pt]{amsart}

\usepackage[T1]{fontenc}
\usepackage[a4paper,margin=1in]{geometry}
\usepackage{amsmath,amssymb,amsthm}
\usepackage{enumitem}
\usepackage[colorlinks=true,linkcolor=blue,citecolor=blue,urlcolor=blue]{hyperref}
\usepackage{orcidlink}
\usepackage{tikz-cd}
\usepackage{todonotes}

\usepackage{mathtools}
\mathtoolsset{showonlyrefs}

\newtheorem{theorem}{Theorem}[section]
\newtheorem{proposition}[theorem]{Proposition}
\newtheorem{lemma}[theorem]{Lemma}
\newtheorem{corollary}[theorem]{Corollary}
\theoremstyle{definition}
\newtheorem{definition}[theorem]{Definition}

\theoremstyle{remark}
\newtheorem{remark}[theorem]{Remark}

\newcommand{\Dist}{\operatorname{Dist}}

\newcommand{\Homeo}{\operatorname{Homeo}}
\newcommand{\MEF}{\operatorname{MEF}}

\newcommand{\T}{\mathbb T}
\newcommand{\Z}{\mathbb Z}
\newcommand{\R}{\mathbb R}
\newcommand{\cZ}{\mathcal Z}

\newcommand{\e}{\mathrm e}

\title
[Joint point-distality and structure theory]
{Joint Point-Distality and structure theory for commuting actions}
\author[E. Glasner]{Eli Glasner}
\address[E. Glasner]{Department of Mathematics, Tel Aviv University, Tel Aviv, Israel}

\email{glasner@math.tau.ac.il}
\author[C. Liu]{Chunlin Liu}
\address[C. Liu]{School of Mathematical Sciences, Dalian University of Technology, Dalian, 116024, P.R. China and Institute of Mathematics, Polish Academy of Sciences, ul. Śniadeckich 8, 00-656 Warszawa, Poland}

\email{chunlinliu@mail.ustc.edu.cn, chunlin.liu.math@gmail.com}
\subjclass[2020]{37B05, 37B20, 54H20}
\keywords{point-distal flow, HPI flow, commuting actions, canonical PI tower, joint transitivity, isometric extension}
\thanks{This paper was supported by   the
	Postdoctoral Fellowship Program and China Postdoctoral
	Science Foundation under Grant Number BX20250067, and the China Postdoctoral Science
	Foundation under Grant Number 2025M773074.}

\begin{document}

\begin{abstract}
Let $G$ and $H$ act commutatively and minimally on a compact metrizable
space $X$.  We prove that if the two subactions are point-distal, equivalently
HPI, then the action generated by them is again point-distal.  We also
construct a common highly proximal extension on which both subactions are
strictly HPI.  As consequences, transitivity of mixed products and linear
iterates upgrades to minimality in the point-distal category.  We then
compare the canonical structure of the two subactions with that of the joint
action.  The maximal highly proximal operations coincide, whereas the
maximal joint isometric factor over a common factor is the meet of the two
subaction-isometric factors.  This gives a recursive description of the
joint Furstenberg tower.  Finally, we construct commuting minimal distal
homeomorphisms of $\mathbb T^3$ whose canonical Furstenberg towers are
different, showing that the meet formula can be strict.
\end{abstract}

\maketitle

\section{Introduction and main results}
\label{sec:introduction}

Let $G$ and $H$ act by commuting homeomorphisms on a compact metrizable
space $X$, and put
$
   \Gamma:=\langle G,H\rangle .
$
The purpose of this paper is to study how the classical structure theory of
minimal flows behaves when two minimal actions coexist and commute on the
same phase space. There are two related, but fundamentally different,
questions. The first is an \emph{existence problem}: if the two subactions
belong to a rigid structural class, does the action generated by them remain
in that class? The second is a \emph{canonical compatibility problem}: if
each subaction carries its own canonical structure tower, to what extent are
these towers forced to agree?

These questions arise naturally from the classical structure theory of
minimal topological dynamical systems. A fundamental starting point is
Furstenberg's structure theorem for distal flows
\cite{Furstenberg1963}. In the compact metrizable setting, a minimal distal
flow can be constructed from the trivial flow through a transfinite
succession of isometric extensions, together with inverse limits at limit
ordinals. By taking at each successor stage the maximal available isometric
extension, one obtains the canonical Furstenberg tower. In this way, the
global structure of a distal system is resolved into a hierarchy of relative
equicontinuous, or equivalently relative isometric, extensions.

The point-distal theory developed subsequently enlarged this picture
considerably. Veech's work on point-distal flows and the equicontinuous
structure relation \cite{VeechPointDistal} showed that the distal structure
theorem admits a substantially broader counterpart in which isometric
extensions are combined with almost one-to-one extensions. Ellis further
developed the Veech structure theorem and its consequences for point-distal
minimal flows \cite{EllisVeech}. These ideas were placed in a more general
proximal--isometric framework by Ellis, Glasner and Shapiro \cite{EGS}.
Highly proximal extensions were developed by Auslander and Glasner
\cite{AuslanderGlasner}, while the PI and HPI structure theories were
further developed in 
\cite{McMahonNachman,VanDerWoude}.

A point-distal system is a highly proximal factor of a strictly HPI
system, where a strictly HPI system is obtained from the one-point flow by
a transfinite tower whose successor maps are either highly proximal or
isometric and whose limit stages are inverse limits. As in the distal case,
one may choose canonical maximal operations at the successive stages. This
canonical structure separates the two basic mechanisms occurring in
point-distal dynamics: highly proximal extensions, which are closely related
to almost one-to-one maps, and isometric extensions, which encode the
relative equicontinuous structure.

A common feature of the classical structure theory, however, is that the
acting group is fixed throughout the construction. Proximality,
equicontinuity, isometricity, and the associated maximal intermediate
factors are all defined relative to that action. Once the same compact space
carries two commuting minimal actions, a new compatibility problem appears.
Suppose that both $(X,G)$ and $(X,H)$ are distal, point-distal, or HPI.
Does the action generated by them have the same structural property? Even
if it does, do the two canonical structure towers agree?

There are also reasons to expect substantial canonical rigidity.  For
example, Shao and Xu \cite{ShaoXu} showed that commuting minimal
homeomorphisms share their higher-order regionally proximal relations and
the associated maximal pro-nilfactors.
On the other hand, the Furstenberg--HPI construction is built from two
primitive operations which behave quite differently when the acting group
is enlarged.
High proximality is
equivalent to irreducibility of the underlying factor map and is therefore
essentially independent of the acting group. Relative isometricity is
different: it requires invariance of a relative metric under the given
action and is consequently sensitive to which group is acting. This
asymmetry is one of the main themes of the present paper.

A second motivation comes from questions of joint transitivity and joint
minimality.  The study of simultaneous orbit density for several iterates
has a substantial history in topological dynamics.  In the weakly mixing
minimal setting, Glasner \cite{Glasner1994} proved joint transitivity for
distinct nonzero powers of a single transformation.  This result was later
extended by Huang, Shao and Ye \cite{HuangShaoYe}, who treated polynomial
expressions arising from several transformations under suitable weak
mixing assumptions, including nilpotent group actions.

A general criterion for linear iterates was later obtained by
Donoso--Koutsogiannis--Sun \cite{DKS}.  For a minimal
$\mathbb Z^k$-system and commuting transformations
$T_1,\ldots,T_d$, they proved that the families
$(T_1^n)_n,\ldots,(T_d^n)_n$ are jointly transitive if and only if
$T_1\times\cdots\times T_d$ is transitive and every pairwise difference
$T_i^{-1}T_j$, $i\neq j$, is transitive.  Their theorem gives a
topological counterpart of the classical Berend--Bergelson criterion for
joint ergodicity.

More recently, and independently of the work of
Donoso--Koutsogiannis--Sun \cite{DKS}, Glasner \cite{GlasnerJoint}
studied joint topological transitivity and the stronger notion of joint
minimality for commuting minimal homeomorphisms.  In particular, he
investigated to what extent the Berend--Bergelson type criterion continues
to hold with transitivity replaced by minimality, and this led naturally to
structural questions in the distal setting.    The present paper develops this structural direction.  We show that the
joint-distality conclusion not only survives but extends to the much broader
point-distal, or equivalently HPI, category.  By contrast, the stronger
expectation that the two subactions should share the same canonical
Furstenberg tower is false; the correct compatibility statement is instead
given by a local comparison of their canonical highly proximal and
isometric successors.

We begin with the existence problem. Our first main result shows that
point-distality is stable under commuting generation.

\begin{theorem}
\label{thm:intro-joint}
Let $G$ and $H$ be countable discrete groups acting commutatively on a
compact metrizable space $X$. If both $(X,G)$ and $(X,H)$ are
minimal point-distal, then the generated system $(X,\Gamma)$ is minimal
point-distal. Equivalently, if both subactions are HPI, then the joint
action is HPI.
\end{theorem}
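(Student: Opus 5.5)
Minimality of $(X,\Gamma)$ is immediate: every $\Gamma$-orbit contains a $G$-orbit, which is already dense. The real content is point-distality, meaning we must find a point of $X$ that is distal for the whole action of $\Gamma$. I would attack this through the Veech--Ellis characterization in the metrizable setting. A minimal metric flow is point-distal if and only if it is a highly proximal (almost one-to-one) factor of a strictly HPI flow. Equivalently, the set of distal points is a dense $G_\delta$ consisting of points $x$ for which the proximal cell $P[x]$ is trivial. So I will aim to show that the distal points of $(X,G)$ and of $(X,H)$ are $\Gamma$-distal, on a large enough set.

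\textbf{Step 1: the distal sets are invariant under the other action.} Let $D_G$ be the set of $G$-distal points. Since each $h\in H$ commutes with $G$, $h$ is a homeomorphism conjugating $(X,G)$ to itself. Hence $h$ maps $G$-proximal pairs to $G$-proximal pairs, and $hD_G=D_G$. Likewise $gD_H=D_H$ for every $g\in G$. In the metrizable point-distal case $D_G$ and $D_H$ are dense $G_\delta$ sets, so by Baire the intersection $D:=D_G\cap D_H$ is a nonempty, $\Gamma$-invariant, dense $G_\delta$.

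\textbf{Step 2: show that some $x\in D$ is $\Gamma$-distal.} We need: if $\gamma_n=g_nh_n$ and $\gamma_nx,\gamma_ny$ converge to a common point, then $y=x$. Passing to the Ellis semigroups, $E(X,\Gamma)$ is the closure of $E(X,G)E(X,H)$, with elements of $E(X,G)$ commuting with the homeomorphisms in $H$. Take a minimal idempotent $u\in E(X,\Gamma)$. The point $x$ is $\Gamma$-distal iff $ux=x$ for every minimal idempotent $u$, which is equivalent to $P_\Gamma[x]=\{x\}$.

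The plan is to factor through the common structure. I would first build a common highly proximal extension $\tilde X\to X$ on which both actions are strictly HPI; the abstract announces this construction and Step 1 suggests it. One way is to take the join of the canonical HPI towers of $(X,G)$ and $(X,H)$, using that the maximal highly proximal extension of a minimal flow depends only on the irreducibility structure of the space and so is the same for $G$ and $H$. Then I would argue by transfinite induction along the $G$-tower. Highly proximal stages preserve $\Gamma$-point-distality downward, because the fibers over distal points are singletons. At an isometric stage $Y\to Z$ for $G$, I would show that a $\Gamma$-proximal pair in one fiber is in fact $G$-proximal. The argument is that the relative invariant metric for $G$ can be averaged or pushed by $H$, giving a family of $G$-invariant relative metrics $h\cdot d$. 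Then $\Gamma$-proximality in a fiber whose base point is $\Gamma$-distal forces $d(\gamma_nx,\gamma_ny)\to 0$, hence $d(x,y)=0$.

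\textbf{Main obstacle.} The hard part is the isometric successor step. The $G$-isometric extension need not be $H$-isometric, and the meet phenomenon announced in the abstract shows this genuinely happens. The base $Z$ must also be $\Gamma$-invariant. I would first try to choose the tower canonically (maximal operations), since canonical stages are automatically $H$-invariant by the commuting-conjugacy argument of Step 1. Within a fiber over a $\Gamma$-distal base point, I would then use the $H$-point-distality at the points of $D$ to rule out $H$-drift: the $H$-part of $\gamma_n$ moves the fiber by isometries of a $G$-invariant fiber metric. This reduces $\Gamma$-proximality to $G$-proximality on the fiber, which is trivial at $G$-distal points.
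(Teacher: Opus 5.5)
\paragraph{Verdict.} Your outline has the right global shape: canonical stages are $H$-invariant, and distal points are lifted through a tower. But two load-bearing steps fail as written.

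\paragraph{First gap: no common metrizable model.} You never obtain a metrizable space on which both actions live and the $G$-action is strictly HPI. Since $(X,G)$ is only HPI, its canonical tower lives on a strict cover $\theta:\widetilde X\to X$. That cover carries the $G$-action but in general no lift of $H$ (and symmetrically for $H$), so there is no $\Gamma$-equivariant ``join'' of the two towers to take.

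Your remark that the maximal highly proximal extension is group-independent is correct: it is the Gleason cover with the uniquely lifted actions. However, that space is non-metrizable, and for infinite $X$ none of its fibres is a singleton. Since highly proximal extensions are proximal, it therefore has no distal points at all. So neither the Baire-category selection of distal points nor the equivalence ``point-distal $\Leftrightarrow$ HPI'' is available there.

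The missing idea is the paper's orbit-lift space $\cZ=\{(z_h)_{h\in H}\in\widetilde X^H:\theta(z_h)=h\theta(z_e)\text{ for all }h\}$. Here $G$ acts diagonally and $H$ acts by $(a\star z)_h=z_{ha}$. The space is metrizable because $H$ is countable. For a minimal $\Gamma$-subset $Z$:
\begin{itemize}
\item the map $p(z)=\theta(z_e)$ has singleton fibres over $X_\infty:=\bigcap_{h\in H}h^{-1}\{x:|\theta^{-1}(x)|=1\}$;
\item the map $z\mapsto z_e$ has singleton fibres over $\theta^{-1}(X_\infty)$.
\end{itemize}
Hence $(Z,G)$ is strictly HPI, $(Z,H)$ is HPI, and both actions live on the same metrizable space.

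\paragraph{Second gap: the isometric step runs in the wrong direction.} Let $\rho$ be a $G$-invariant relative metric, and suppose $d(g_nh_nw,g_nh_nw')\to0$ with $\pi(w)=\pi(w')$. Invariance lets you cancel $g_n$: $\rho(h_nw,h_nw')=\rho(g_nh_nw,g_nh_nw')\to0$. So $(w,w')$ is $H$-proximal. It is not thereby $G$-proximal, and you do not get $d(w,w')=0$.

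The $H$-part does not act by isometries of a $G$-invariant fibre metric in general. In the paper's Section~5, $\pi$ is $T$-isometric while $S$ shears the fibres. Nor can you average over a noncompact $H$. Consequently, concluding $w'=w$ requires the lifted point $w$ itself to be $H$-distal at that stage of the tower (Lemma~\ref{lem:mixed-isometric}). It is not enough that the original point lies in $D_G\cap D_H\subseteq X$.

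\paragraph{Related issue at highly proximal stages.} The fibre over a distal point need not be a singleton; a Sturmian system over its rotation is an example. You must instead choose the point so that its image lies in the singleton-fibre set.

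\paragraph{How the paper meets both requirements.} It takes $x$ in the residual set $\bigcap_\alpha q_\alpha^{-1}(\Dist_H(X_\alpha))\cap\bigcap_\alpha q_\alpha^{-1}(C_\alpha)$ along the canonical $G$-tower of $Z$. This uses three facts:
\begin{itemize}
\item every stage is an $H$-factor (your Step~1 idea, made precise in Proposition~\ref{prop:characteristic-successors});
\item the factor maps are semi-open;
\item the tower has countable height.
\end{itemize}
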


For a general HPI action an additional difficulty appears. A chosen strict
HPI cover of the $G$-action need not admit a lift of the commuting
$H$-action, so one cannot simply pass to an arbitrary strictification and
apply the strict case. We overcome this obstruction by constructing an
equivariant orbit-lift space. This gives a common structural model for the
two subactions and leads to the following simultaneous strictification
theorem.

\begin{theorem}
\label{thm:intro-strictification}
Under the hypotheses of Theorem~\ref{thm:intro-joint}, there exist a compact
metrizable space $\widehat X$, commuting minimal lifts of the $G$- and
$H$-actions, and a highly proximal joint factor map
$
   p:(\widehat X,\Gamma)\to(X,\Gamma)
$
such that both $(\widehat X,G)$ and $(\widehat X,H)$ are strictly HPI.
Moreover, $(\widehat X,\Gamma)$ is minimal HPI.
\end{theorem}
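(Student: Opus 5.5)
The plan is to build $\widehat X$ as a canonical, hence action-independent, highly proximal extension of $X$: the space of maximal-highly-proximal points, in the spirit of the Auslander--Glasner construction \cite{AuslanderGlasner}. The point is that high proximality is equivalent to irreducibility of the factor map, so it does not depend on the acting group. Concretely, I would take $\widehat X$ to be the closure in $2^X$ of the set of fibres
\[
\bigl\{\,\bigcap_{\varepsilon>0}\overline{\pi^{-1}(\ldots)}\,\bigr\},
\]
or, more cleanly, the Stone-type space of the regular-open algebra restricted to a countable $\Gamma$-invariant base. This keeps $\widehat X$ metrizable because $\Gamma$ is countable. Every homeomorphism of $X$ acts on this space, so the commuting $G$- and $H$-actions lift automatically to commuting actions on $\widehat X$. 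The map $p\colon\widehat X\to X$ is irreducible and therefore highly proximal for $G$, for $H$ and for $\Gamma$ at once. Minimality of the lifts follows from minimality downstairs together with irreducibility: a closed invariant proper subset of $\widehat X$ would have to project onto $X$, contradicting irreducibility.

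The next step is to show that $(\widehat X,G)$ is strictly HPI, and here I would use the structure theorem quoted in the introduction. Since $(X,G)$ is point-distal, it is a highly proximal factor of some strictly HPI system $(Y,G)$. The key lemma is that a sufficiently large canonical highly proximal extension absorbs this. Every highly proximal extension of $X$ should be dominated by a maximal highly proximal extension $X^*$ of $X$, and $X^*$ then sits highly proximally over $Y^*$ as well. Strict HPI is preserved under maximal highly proximal extension: one lifts each stage of the tower, and the $\mathcal{I}$-stages lift because an isometric extension pulled back along a highly proximal map stays isometric after applying maximal highly proximal operations stagewise. So the choice I commit to is: let $\widehat X$ be the $G$-maximal highly proximal extension and check that it coincides with the $H$-maximal and $\Gamma$-maximal ones. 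This agreement is exactly the "maximal highly proximal operations coincide" phenomenon, and it holds because the construction uses only the topology of $X$.

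The main obstacle is metrizability combined with the fact that the maximal highly proximal extension is usually non-metrizable. To handle this I would use the "equivariant orbit-lift" device mentioned in the text. Fix a countable strictly HPI tower for $(X,G)$ with metrizable $Y$. Then take $\widehat X$ to be the orbit closure under $\Gamma$, inside $\prod_{h\in H} Y$, of a point lifting a distal point. Here $H$ acts by shift composed with the conjugated maps $h\colon Y\to hY$, since each $h$ carries the $G$-tower of $X$ to another $G$-tower of $X$, and $G$ acts diagonally. This space is metrizable because $H$ is countable, and $H$ acts by shifting coordinates. I then need four facts:
\begin{enumerate}[label=(\roman*)]
\item $(\widehat X,G)$ is a subsystem of a countable product of strictly HPI systems that are joined over $X$ highly proximally, hence is strictly HPI (the class is closed under such joinings and minimal subsystems);
\item symmetrically, $(\widehat X,H)$ is strictly HPI after intersecting with the analogous construction for $H$;
\item $p$ is highly proximal, since a countable fibred product of highly proximal extensions is highly proximal over a residual set of distal points;
\item minimality, obtained via distal points, whose lifts give a unique minimal component.
\end{enumerate}
Finally, $(\widehat X,\Gamma)$ is HPI by Theorem~\ref{thm:intro-joint} applied to the lifted actions, or directly because it is a highly proximal extension of the $\Gamma$-HPI system $X$.
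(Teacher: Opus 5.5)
Your final construction is the paper's orbit-lift device, and your last step ($\Gamma$-HPI via Theorem~\ref{thm:intro-joint}) is fine. But the two verifications that carry the theorem, your (i) and (ii), are asserted rather than proved, and (ii) is not well defined as written.

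The missing observation drives both and is elementary: a highly proximal extension of a minimal strictly HPI flow is again strictly HPI, simply by appending that one map on top of its tower. No stagewise lifting of isometric stages, as in your second paragraph, is needed.

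This also shows what your first idea lacks. The Stone space of a countable $\Gamma$-invariant algebra of regular open sets containing a base of $X$ is a metrizable joint highly proximal extension with commuting minimal lifts, but nothing makes it strictly HPI. It would become so for both subactions if the algebra also contained the regular open sets of $X$ corresponding, under the isomorphism of regular open algebras induced by an irreducible map, to countable bases of a strict $G$-cover $\theta\colon\widetilde X\to X$ (your $Y$) and of a strict $H$-cover $\widetilde X'\to X$. The Stone space would then map irreducibly onto $\widetilde X$ ($G$-equivariantly) and onto $\widetilde X'$ ($H$-equivariantly). That would be a genuinely different, symmetric route, but it is not what you wrote.

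In the orbit-lift part, three things need fixing.

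First, $H$ does not act on $\widetilde X$, so there are no maps $h\colon\widetilde X\to\widetilde X$ to compose with. The correct space is $\cZ=\{z\in\widetilde X^H:\theta(z_h)=h\theta(z_e)\ \text{for all } h\in H\}$, with $G$ acting diagonally and $H$ by the pure shift $(a\star z)_h=z_{ha}$. Any minimal $\Gamma$-subsystem $Z$ will do; no distal point is needed.

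Second, (i) does not rest on a general closure property of strictly HPI flows under joinings, which you have not justified. It rests on the following. Let $X_0$ be the set of points with singleton $\theta$-fibre, and put $X_\infty=\bigcap_{h\in H}h^{-1}X_0$, a dense $G_\delta$ because $H$ is countable. Then $p(z)=\theta(z_e)$ has singleton fibres over $X_\infty$. Hence $c_e(z)=z_e$ is almost one-to-one over $\theta^{-1}(X_\infty)$ and can be appended to a tower for $\widetilde X$. The same set gives your (iv): a closed $H$-invariant subset of $Z$ maps onto $X$, hence contains $p^{-1}(X_\infty)$, which is dense because $p$ is semi-open.

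Third, one orbit-lift only yields $(Z,H)$ HPI, and ``intersecting'' subsets of $\widetilde X^H$ and $(\widetilde X')^G$ is meaningless. There are two ways to proceed:
\begin{itemize}
\item Take a minimal $\Gamma$-subset of the fibred product over $X$ of the two orbit-lift spaces. Its projections to $\widetilde X$ and to $\widetilde X'$ are then both almost one-to-one.
\item Do as the paper does: apply the construction once to get $p_1\colon X_1\to X$ with $(X_1,G)$ strictly HPI, then again to $X_1$ with $G$ and $H$ interchanged to get $p_2\colon\widehat X\to X_1$ with $(\widehat X,H)$ strictly HPI. The point you omit is that $p_2$ is irreducible, hence also $G$-highly proximal. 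So appending it to the strict $G$-tower of $X_1$ keeps $(\widehat X,G)$ strictly HPI, and $p_1p_2$ is irreducible as a composition of irreducible maps.
\end{itemize}
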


Thus commuting point-distal actions always admit a common strict HPI model,
although such a common model is not supplied by the individual structure
theorems. It remains open whether the strictification is unnecessary,
namely whether the joint action itself must be strictly HPI whenever both
subactions are minimal strictly HPI.

Theorem~\ref{thm:intro-joint} also gives a common mechanism for the
orbit-density questions discussed above. If $(X,\Gamma)$ is minimal
point-distal and a subgroup $\Lambda\leq\Gamma$ acts transitively, then
the $\Lambda$-action is already minimal point-distal. Likewise, if
$R_1,\ldots,R_d\in\Gamma$ and
$
   R_1\times\cdots\times R_d
$
is transitive on $X^d$, then this product system is minimal point-distal.
When the $R_i$ commute, every difference $R_i^{-1}R_j$ is then minimal
point-distal. Combining this with the criterion of
Donoso--Koutsogiannis--Sun \cite{DKS}, we obtain that in a minimal
point-distal $\mathbb Z^k$-system the following conditions are equivalent:
product transitivity of $T_1\times\cdots\times T_d$, minimality of this
product, joint transitivity of the families
$(T_1^n)_n,\ldots,(T_d^n)_n$, and joint minimality of these families.

\medskip

We next turn from existence of a joint structure to the substantially finer
problem of canonical compatibility. The distinction is important. The
fact that $(X,\Gamma)$ is HPI guarantees the existence of an HPI
presentation for the joint action, but it does not imply that the canonical
presentations of the two subactions agree.

Let
$
   q:(X,G,H)\to(Y,G,H)
$
be a common factor map. For an acting group $L$, write
$\mathsf I_L(q)$ for the maximal $L$-isometric intermediate factor of
$q$, and $\mathsf H_L(q)$ for the canonical maximal $L$-highly
proximal intermediate factor. Instead of comparing whole transfinite
towers at once, we first compare these two primitive canonical operations
over an arbitrary common factor.

\begin{theorem}
\label{thm:intro-primitive-comparison}
Let $G$ and $H$ act commutatively and minimally on $X$, and put
$\Gamma=\langle G,H\rangle$. Then the $G$-, $H$-, and
$\Gamma$-actions have the same maximal equicontinuous factor. Moreover,
for every common factor $q:X\to Y$,
\[
   \mathsf H_G(q)
   =
   \mathsf H_H(q)
   =
   \mathsf H_\Gamma(q),
\]
whereas
\[
   \mathsf I_\Gamma(q)
   =
   \mathsf I_G(q)\wedge_Y\mathsf I_H(q).
\]
\end{theorem}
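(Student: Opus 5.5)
The proof splits into three parts, handled in order: the maximal equicontinuous factor, the highly proximal operation, and the isometric meet formula.

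\emph{Maximal equicontinuous factor.} I would use the regional proximality description. For a minimal action of $L$ on compact metrizable $X$, the maximal equicontinuous factor is $X/S_L$. Here $S_L$ is the smallest closed invariant equivalence relation containing the regionally proximal relation $Q_L$.

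First, commutativity gives $\Gamma$-invariance. For $h\in H$, the map $h$ is a homeomorphism commuting with $G$, so $(h\times h)Q_G=Q_G$. Hence $S_G$ is a closed $\Gamma$-invariant relation, and $X/S_G$ carries a $\Gamma$-action.

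Second, the $H$-action on $X/S_G$ is equicontinuous. The group $G$ acts minimally and equicontinuously on $X/S_G$, so $(X/S_G,G)$ is a minimal rotation on a compact group quotient. Each $h$ commutes with this rotation, and automorphisms of a minimal equicontinuous system are isometries for the invariant metric. So $\Gamma$ acts equicontinuously on $X/S_G$, and therefore $S_\Gamma\subseteq S_G$.

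Third, $Q_\Gamma\supseteq Q_G$ trivially, which gives $S_G\subseteq S_\Gamma$. By symmetry all three relations coincide.

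\emph{Highly proximal operation.} I would use the fact recalled in the introduction that high proximality of $\pi:Z\to Y$ is equivalent to irreducibility of the map. Irreducibility means that the only closed set $C\subseteq Z$ with $\pi(C)=Y$ is $C=Z$, and this does not mention the acting group. So for an intermediate common factor, being $L$-highly proximal over $Y$ is the same condition for $L\in\{G,H,\Gamma\}$.

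The canonical $\mathsf H_L(q)$ is the largest intermediate factor $X\to Z\to Y$ with $Z\to Y$ highly proximal. Concretely, it is the quotient of $X$ by the smallest closed invariant relation $R_L$ that identifies points $x,x'$ in a common fiber whenever every irreducible intermediate factor must identify them.

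To compare across groups, I would show $\mathsf H_G(q)$ is $H$-invariant. For each $h\in H$, the translate $h\cdot\mathsf H_G(q)$ is again a maximal $G$-highly proximal intermediate factor of $q$. This uses that $h$ commutes with $G$ and preserves $q$. Uniqueness of the maximal one then gives $h\cdot\mathsf H_G(q)=\mathsf H_G(q)$, so $\mathsf H_G(q)$ is a $\Gamma$-factor.

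It follows that $\mathsf H_G(q)\le\mathsf H_\Gamma(q)$ by maximality, and the reverse inequality holds because $\Gamma$-factors are $G$-factors and irreducibility is group-independent. The same argument with $H$ in place of $G$ finishes this part.

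\emph{Isometric meet formula.} I would argue by two inequalities.

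For $\le$: a $\Gamma$-isometric extension $Z\to Y$ admits a continuous relative metric on $R_{Z\to Y}$ that is invariant under $\Gamma$. That metric is in particular $G$-invariant and $H$-invariant. Hence $\mathsf I_\Gamma(q)$ lies below both $\mathsf I_G(q)$ and $\mathsf I_H(q)$, and so below their meet over $Y$.

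For $\ge$: first, the invariance argument from the previous part shows that $\mathsf I_G(q)$ and $\mathsf I_H(q)$ are common $\Gamma$-factors. Their meet $W=\mathsf I_G(q)\wedge_Y\mathsf I_H(q)$ is therefore a $\Gamma$-factor lying between $X$ and $Y$. By the stability of relative isometricity under factors, $W\to Y$ is both $G$-isometric and $H$-isometric. What remains is to show that $W\to Y$ is $\Gamma$-isometric, and this is the main obstacle.

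The difficulty is that the $G$-invariant and $H$-invariant relative metrics may differ, and one must produce a single $\Gamma$-invariant one. I would work with the equicontinuity characterization of relative isometricity for minimal distal-fibre extensions (the extension is isometric iff the family is relatively equicontinuous). Starting from a $G$-invariant relative metric $d$ on $W$, I would form
\[
d'(w,w')=\sup_{h\in H}d(hw,hw').
\]
I need to show that the family $\{h\}$ is equicontinuous on fibres with respect to $d$. I would get this from the $H$-isometricity of $W\to Y$: relative equicontinuity is a uniform-structure property, and relatively invariant metrics on the fibred product are uniformly equivalent. The equivalence comes from compactness together with the fact that both metrics induce the relative topology.

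Granting that, $d'$ is continuous, relatively compatible, and $\Gamma$-invariant. Invariance under $H$ is built in. Invariance under $G$ holds because $G$ commutes with $H$ and preserves $d$.

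I expect the delicate points to be the uniform equivalence of the fibrewise metrics and the continuity of $d'$ in the relative setting. If the direct supremum argument fails, my fallback is the Ellis-group description: isometric extensions correspond to $\mathfrak G(Y)\supseteq H(\mathfrak G(Y))\subseteq\mathfrak G(W)$ in the universal enveloping framework. I would then check that the $\tau$-topology condition for $\Gamma$ follows from the conditions for $G$ and $H$.
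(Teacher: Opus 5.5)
The gap is in the last step of (iii): the continuity of your sup-metric $d'$ is asserted, not proved, and it is also unnecessary. Everything before that follows the paper. This includes (i), (ii), the inequality $\mathsf I_\Gamma(q)\preceq_Y\mathsf I_G(q)\wedge_Y\mathsf I_H(q)$, and the $\Gamma$-invariance of $\mathsf I_G(q)$, $\mathsf I_H(q)$ and their meet. Your formulation of (i) via $S_G=S_\Gamma$ repackages Proposition~\ref{prop:characteristic-successors} together with Lemma~\ref{lem:compact-centralizer}.

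One small correction in (i): for non-abelian $G$, automorphisms of $(K/L,G)$ need not be isometries of an arbitrary $G$-invariant metric. For example, take $L$ trivial and a left- but not bi-invariant metric on $K$. You need either the metric induced by a bi-invariant metric on $K$, or, as in the paper, compactness of the centralizer $N_K(L)/L$.

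Now the gap. You claim that once $\{h\}_{h\in H}$ is fibrewise equicontinuous for the $G$-invariant relative metric $d$, the function $d'(w,w')=\sup_{h\in H}d(hw,hw')$ is continuous on $R_\pi$. This does not follow. The estimate $|d(hw_1,hw_1')-d(hw_2,hw_2')|\le d(hw_1,hw_2)+d(hw_1',hw_2')$ is available only when $w_1,w_2$ lie in the same fibre; otherwise $d(hw_1,hw_2)$ is not even defined. So you get lower semicontinuity of $d'$ and uniform continuity along fibres, but not upper semicontinuity across fibres.

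Proving that upper semicontinuity is exactly the hard direction of ``relatively equicontinuous $\Rightarrow$ isometric''. One would need, for instance, that $d'$ is constant on the $\Gamma$-orbit closures in $R_\pi$, that these are minimal because $\pi$ is $\Gamma$-distal, and that they project openly onto $Y$. Your Ellis-group fallback is not a concrete substitute.

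No new metric is needed, because you already invoke the equivalence of isometricity and relative equicontinuity for metrizable minimal extensions. Choose $\delta$ from the $H$-equicontinuity of $W\to Y$ with respect to $d$. Then for $(w,w')\in R_\pi$ with $d(w,w')<\delta$ and all $g\in G$, $h\in H$, you get $d(ghw,ghw')=d(hw,hw')<\varepsilon$. Every element of $\Gamma$ has the form $gh$, so $W\to Y$ is relatively $\Gamma$-equicontinuous. Since $(W,\Gamma)$ and $(Y,\Gamma)$ are minimal, it is therefore $\Gamma$-isometric. This is the paper's Lemma~\ref{lem:separate-joint-equicontinuity}, done there with the ambient metric in two $\varepsilon$--$\delta$ steps, followed by the citation to Auslander for the equivalence.
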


Consequently, if the canonical HPI constructions for $G$ and $H$ agree
up to a common stage, then their next highly proximal successors agree as
well. In particular, the first discrepancy between the two canonical
towers can occur only at an isometric stage. At such a stage the canonical
successor for the joint action is the meet of the two subaction successors.

In the distal case, Theorem~\ref{thm:intro-primitive-comparison} gives a
particularly transparent canonical description. If both $(X,G)$ and
$(X,H)$ are minimal distal, then $(X,\Gamma)$ is minimal distal.
Starting from their common maximal equicontinuous factor, the canonical
Furstenberg tower of the joint action is obtained recursively by replacing
each successor with
$
   \mathsf I_G(q_\alpha)
   \wedge_{Y_\alpha}
   \mathsf I_H(q_\alpha)
$
and taking inverse limits at limit ordinals.

It is natural at this point to ask for a stronger rigidity statement:
perhaps commuting minimal distal actions have the same canonical
Furstenberg tower. The preceding meet formula shows what such a statement
would require, but does not force it. Our final main result shows that the
stronger statement is false, already for commuting minimal distal
homeomorphisms of a finite-dimensional torus.

\begin{theorem}
\label{thm:intro-counterexample}
There exist commuting minimal distal homeomorphisms
$T,S\in\Homeo(\mathbb T^3)$, with
$\Gamma=\langle T,S\rangle$, and common factors
\[
   X=\mathbb T^3
   \xrightarrow{\rho}
   Z=\mathbb T^2
   \xrightarrow{\pi_0}
   Y=\mathbb T
\]
such that $Y$ is the maximal equicontinuous factor of the $T$-,
$S$-, and $\Gamma$-actions, but
\[
   \mathsf I_{\langle T\rangle}(Y)=X,
   \qquad
   \mathsf I_{\langle S\rangle}(Y)
   =
   \mathsf I_\Gamma(Y)
   =
   Z.
\]
Consequently, the canonical Furstenberg tower of $T$ is
\[
   \{*\}\longleftarrow Y\longleftarrow X,
\]
whereas the canonical towers of $S$ and $\Gamma$ are
\[
   \{*\}\longleftarrow Y\longleftarrow Z\longleftarrow X.
\]
\end{theorem}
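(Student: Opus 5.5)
We will realize the example by explicit skew products on $\mathbb T^3$ with coordinates $(x,y,z)$. We take $Y=\mathbb T$ with the $x$-coordinate and $Z=\mathbb T^2$ with $(x,y)$. Fix an irrational $\alpha$ and define
\[
   T(x,y,z)=(x+\alpha,\;y+x,\;z+y),\qquad
   S(x,y,z)=(x,\;y+\beta,\;z+\phi(x)),
\]
or a variant with $T$ and $S$ in the roles needed. The design principle is that the maximal $L$-isometric extension of $Y$ is governed by which fibre rotations are isometric relative to $Y$. For $T$ we want the whole fibre $\mathbb T^2$ over $Y$ to be acted on by an isometry depending on $x$. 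For $S$, the top coordinate must be sheared by $y$, so that $X\to Y$ is not $S$-isometric.

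The concrete choice I would try is the following pair of affine maps:
\[
   T(x,y,z)=(x+\alpha,\,y+\beta,\,z+\gamma),
\]
\[
   S(x,y,z)=(x+\alpha',\,y+ x,\,z+y+c(x)),
\]
with affine integer data arranged so that $TS=ST$. Since $T$ is a translation, commutation reduces to checking the constant terms, and one adjusts $\beta,\gamma$ so they cancel. But a pure translation $T$ would give $\mathsf I_{\langle T\rangle}(Y)=X$ only if $Y$ is also the maximal equicontinuous factor of $T$. A translation is equicontinuous, so its maximal equicontinuous factor would be all of $X$, which is wrong. Hence $T$ must itself be non-equicontinuous over $Y$ but isometric over $Y$: $T$ should act on the fibres $\mathbb T^2$ by rotations depending on $x$, forming a nontrivial cocycle $(x+\alpha,\,y+f(x),\,z+g(x))$. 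Meanwhile $S$ is a skew product of $y$-over-$x$ and $z$-over-$y$ type, taken in Heisenberg/affine form so that it commutes with $T$. Commutation then imposes cohomological equations such as $f(x+\alpha')-f(x)=\alpha$-type identities. To satisfy them I would use linear functions of $x$ in unipotent matrices: take $T,S$ in a common abelian group of affine maps $v\mapsto Av+b$ with $A$ unipotent upper triangular, in which the $T$-matrix has the single nonzero off-diagonal block in the first column. For example:
\[
   A_T=\begin{pmatrix}1&0&0\\1&1&0\\1&0&1\end{pmatrix},\qquad
   A_S=\begin{pmatrix}1&0&0\\ 1&1&0\\ 0&1&1\end{pmatrix}.
\]
These must commute, which forces adjustment, e.g. $A_S=A_T^{k}N$ with $N$ in the centralizer.

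The steps, in order, are as follows. (1) Exhibit commuting integer unipotent matrices $A_T,A_S$ such that $A_T-I$ has image contained in the kernel of the $x$-projection and only depends on $x$, while $A_S$ has a genuine Jordan block of size $3$. Also choose translation vectors giving minimality and distality; distality is automatic for unipotent affine maps, and minimality follows by Hahn/Parry criteria for nilsystems on tori, checking minimality on the maximal torus factor. (2) Compute the maximal equicontinuous factors. Using Theorem~\ref{thm:intro-primitive-comparison}, it suffices to do this for one action, and for affine nilsystems the MEF is the quotient by the image of $A-I$ plus rational closure; we verify it is $Y$. (3) Show that $X\to Y$ is $T$-isometric, since $T$ acts on the fibres by rotations. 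This gives $\mathsf I_{\langle T\rangle}(Y)=X$. (4) Show $\mathsf I_{\langle S\rangle}(Y)=Z$. That $Z\to Y$ is isometric is clear. For maximality, one shows that any intermediate factor strictly above $Z$ would carry an $S$-isometric extension of $Y$. Lifting to the circle extension $X\to Z$ with cocycle $y$ then yields, via the Furstenberg cohomology argument (Fourier in $z$), that $n\cdot y$ is cohomologous over $Y$ to a function of $x$ only, contradicting the nontriviality of the Jordan block. (5) Conclude $\mathsf I_\Gamma(Y)=\mathsf I_{\langle T\rangle}(Y)\wedge_Y\mathsf I_{\langle S\rangle}(Y)=Z$ by Theorem~\ref{thm:intro-primitive-comparison}, and read off the towers.

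I expect the main obstacles to be step (1), finding genuinely commuting matrices with these different Jordan structures, and step (4), the rigidity proof that no intermediate isometric factor exists. For (1), a rank computation suggests the centralizer of a size-$3$ Jordan block consists of polynomials in it. Hence $T$ cannot be affine with the same linear part structure, so I would allow $T$ to be a non-affine skew product $(x+\alpha,\,y+f(x),\,z+g(x)+h(x)y)$ and solve the commutation equations with trigonometric $f,g,h$.
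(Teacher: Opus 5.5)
There is a genuine gap in step (1), and step (1) carries the entire content of the theorem. You never produce a commuting pair, and the shape you insist on for $S$ cannot produce one.

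Your fallback $T(x,y,z)=(x+\alpha,\,y+f(x),\,z+g(x)+h(x)y)$ does not help. Continuity on $\T^3$ forces $h$ to be a constant integer. If $h\neq0$, then $T$ shears the $Y$-fibres, so $X\to Y$ is not $T$-isometric and $\mathsf I_{\langle T\rangle}(Y)=X$ is lost.

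So take $h=0$, which is the fibrewise rotation you correctly identified earlier. Let $S(x,y,z)=(x+\alpha',\,y+s(x),\,z+y+c(x))$ with $s\colon\T\to\T$ of degree $d$; your Jordan block is $s(x)=x$, $d=1$.
\begin{itemize}
\item Comparing third coordinates in $TS=ST$ gives $f(x)=c(x)-c(x+\alpha)+g(x+\alpha')-g(x)$. Hence $f$ has degree $0$ and a real lift $\tilde f$.
\item Write $s(x)=dx+\tilde s(x)$. Comparing second coordinates gives $\tilde f(x+\alpha')-\tilde f(x)+\tilde s(x)-\tilde s(x+\alpha)\equiv d\alpha\pmod 1$.
\item Integrating over $\T$ yields $d\alpha\in\Z$.
\end{itemize}
So for $d\neq0$ the base rotation of $T$ is rational and $T$ is not minimal. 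The ``genuine Jordan block'' you require in step (1) is exactly what must be avoided.

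The missing idea, which is what the paper does, is to keep your fibre-rotation $T$ but give $S$ a degree-zero middle cocycle, with its shear only at the top level. In your coordinates this is $T(x,y,z)=(x+\tau,\,y+g(x),\,z+f(x))$ and $S(x,y,z)=(x+\sigma,\,y+v(x),\,z+y)$, with $f,g,v$ real-valued. Commutation then becomes the pair of cohomological equations $f(x+\sigma)-f(x)=g(x)$ and $v(x+\tau)-v(x)=g(x+\sigma)-g(x)$.

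The real difficulty is to solve these with continuous functions while keeping the cocycles far from coboundaries. The paper uses lacunary series at frequencies $k_j=2^{2j(j-1)}$ and Liouville-type $\tau,\sigma$ with $\lambda_j=\e(k_j\tau)-1$, $\mu_j=\e(k_j\sigma)-1$ satisfying $|\lambda_j|\asymp 2^{-4j}$ and $|\mu_j|\asymp 2^{-2j}$. This does two things at once:
\begin{itemize}
\item The coefficients $2^{-j}\mu_j^2/\lambda_j$ of $v$ are summable, so $v$ is continuous.
\item By Lemma~\ref{lem:fourier-obstruction}, no $\e(mf+ng)$ with $(m,n)\neq(0,0)$ is cohomologous to a constant over $R_\tau$, and no $\e(nv)$ with $n\neq0$ is over $R_\sigma$, because the relevant ratios grow like $2^{3j}$ or $2^{j}$.
\end{itemize}
This one estimate gives minimality via Lemma~\ref{lem:parry}. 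Combined with the vertical translations commuting with $T$, it also shows that $Y$ is the maximal equicontinuous factor of $T$. Your step (2), which rests on the MEF formula for affine nilsystems, has no analogue for this non-affine $T$.

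Once the example is in this form, your steps (3) and (5) are exactly the paper's. Step (4) is simplest via Lemma~\ref{lem:shear}: the shear $z\mapsto z+y$ puts every pair in a $\rho$-fibre directly into $Q_{\langle S\rangle}(\pi)$. No cohomological argument over the finite-cyclic intermediate factors is needed.
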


This also highlights a useful contrast with the higher-order regionally
proximal theory. By the result of Shao and Xu \cite{ShaoXu}, the
higher-order regionally proximal relations and the corresponding maximal
pro-nilfactors are invariant under passage between commuting minimal
homeomorphisms. The canonical Furstenberg tower is more sensitive. Its
highly proximal stages enjoy an analogous rigidity, but its isometric
stages need not coincide. 

The paper is organized according to this progression.
Section~\ref{sec:preliminaries} collects the HPI and relative-isometric
background needed later.
Section~\ref{sec:joint-point-distality} proves Theorems \ref{thm:intro-joint} and \ref{thm:intro-strictification}.
Section~\ref{sec:applications} records the resulting transitivity and
linear-iterate consequences.
Section~\ref{sec:canonical} proves Theorem \ref{thm:intro-primitive-comparison}.
Finally, Section~\ref{sec:counterexample} constructs the distal torus
example showing Theorem \ref{thm:intro-counterexample}.

\section{Preliminaries}
\label{sec:preliminaries}

Throughout this paper, all acting
groups are countable and discrete, all phase spaces are compact metrizable,
and all actions are by homeomorphisms.

Let a group $L$ act on a compact metric space $(X,d)$, and call 
$(X,L)$ a \emph{flow}.  A pair $(x,x')\in X^2$ is
\emph{$L$-proximal} if there is a sequence $\ell_n\in L$ such that
\[
 d(\ell_nx,\ell_nx')\longrightarrow0.
\]
A point $x$ is \emph{$L$-distal} if the only point $L$-proximal to $x$ is
$x$ itself.  A minimal $L$-system is \emph{point-distal} if it has an
$L$-distal point.  We write $\Dist_L(X)$ for the set of $L$-distal points.

For an extension $\pi:(X,L)\to(Y,L)$, put
\[
 R_\pi:=\{(x,x')\in X^2:\pi(x)=\pi(x')\}.
\]
The extension is \emph{$L$-isometric} if there is a continuous relative
metric
$
 \rho:R_\pi\to[0,\infty)
$
whose restriction to every fibre is a compatible metric and which satisfies
\[
 \rho(\ell x,\ell x')=\rho(x,x')
 \qquad(\ell\in L,\ (x,x')\in R_\pi).
\]
It is \emph{highly proximal} if for any nonempty open subset $U\subset X$, there exists $y\in Y$ and $g\in L$ such that $g\pi^{-1}(y)\subset U$.

A minimal flow is \emph{strictly HPI} if it is obtained from the one-point
flow by a transfinite tower whose successor maps are either highly proximal
or isometric and whose limit stages are inverse limits.  It is \emph{HPI} if
it is a highly proximal factor of a strictly HPI flow.  In the metrizable
category, highly proximal extensions are exactly 
almost one-to-one. 
Consequently, in the metrizable setting, the HPI property may equivalently be formulated using almost one-to-one and isometric extensions. It is then referred to as having the AI property.

We use the following standard facts from the structure theory of Veech,
Ellis--Glasner--Shapiro, Auslander--Glasner, and van der Woude
\cite{VeechPointDistal,EGS,AuslanderGlasner,VanDerWoude}.

\begin{proposition}\label{prop:standard-HPI}
Let $(X,L)$ be a  minimal flow.
\begin{enumerate}[label=\textup{(\roman*)}]
\item $(X,L)$ is HPI if and only if it is point-distal.
\item If $(X,L)$ is point-distal, then $\Dist_L(X)$ is a dense
      $G_\delta$ subset of $X$.
\item Every metrizable factor of an HPI flow is HPI.
\item Every factor map whose domain is minimal is semi-open.
\item Every distal point of a compact flow is a minimal point.
\end{enumerate}
Moreover, a metrizable strictly HPI flow has a canonical HPI tower of
countable ordinal height.
\end{proposition}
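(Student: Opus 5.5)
Since this proposition collects standard facts, the plan is to verify the short items directly and to reduce the deeper ones to the cited structure theorems.

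\emph{Item (v).} I would argue with the enveloping semigroup. Let $x$ be $L$-distal and let $E$ be the enveloping semigroup of $(X,L)$. Choose a minimal left ideal $M\subseteq E$ and an idempotent $u\in M$. The pair $(ux,x)$ is proximal, since $u(ux)=ux$. Distality of $x$ then forces $ux=x$. Hence $x\in Mx$, which is a minimal set, so $x$ is a minimal point.

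\emph{Item (iv).} Let $\pi:X\to Y$ be a factor map with $X$ minimal, let $U\neq\emptyset$ be open, and pick an open $V\neq\emptyset$ with $\overline V\subseteq U$. By minimality and compactness, finitely many translates $\ell_1\overline V,\dots,\ell_n\overline V$ cover $X$. Their images are closed sets covering $Y$, so by Baire one of them, say $\ell_i\pi(\overline V)$, has nonempty interior. Translating back by $\ell_i^{-1}$ shows that $\pi(U)\supseteq\pi(\overline V)$ has nonempty interior, which is semi-openness.

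\emph{Item (iii).} This follows from (i) once (i) is available: a factor map sends $L$-distal points to $L$-distal points, so any factor of a point-distal minimal flow is point-distal.

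\emph{Item (i), HPI $\Rightarrow$ point-distal.} Let $X_\theta$ be a strictly HPI tower; I would show by transfinite induction that $X_\theta$ has a distal point.
\begin{itemize}
\item \emph{Isometric step.} If $x$ lies over a distal point $y$ and $(x,x')$ is proximal, then $\pi(x)=\pi(x')$. Invariance of the relative metric then gives $\rho(x,x')=0$, so $x=x'$ and $x$ is distal.
\item \emph{Almost one-to-one step.} Distal points form a dense invariant set, and a residual set of base points has singleton fibres. Along any net with $\ell_n x\to z$ and $\ell_n x'\to z$, one can pass to the singleton-fibre locus to lift distality.
\item \emph{Limit stages.} A compatible sequence of distal points defines a distal point of the inverse limit, because proximality is detected on coordinates.
\end{itemize}
For a non-strict HPI flow, distal points push forward along the highly proximal factor map.

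\emph{Item (i), point-distal $\Rightarrow$ HPI.} This is Veech's structure theorem as sharpened by Ellis--Glasner--Shapiro and Auslander--Glasner, which I would cite rather than reprove. In metrizable spaces almost one-to-one and highly proximal extensions coincide, so the AI and HPI formulations agree.

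\emph{Item (ii).} Given (i), I would take a strict HPI cover $\widehat X\to X$ and run the induction above. It shows that the distal points contain a residual set: at each stage the set of distal points over distal points is residual, using countable height and a countable intersection of residual sets. A factor map with minimal domain is semi-open by (iv), so images of residual sets under the almost one-to-one cover remain residual. The $G_\delta$ property is the delicate part and is the main obstacle. There I would cite Veech, who expresses $\Dist_L(X)$ through the continuity points of a lower semicontinuous set-valued map, namely the proximal cell map.

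\emph{Final statement.} Canonical maximal operations exist at each successor stage, as shown in the cited works. Each proper stage strictly shrinks a closed invariant relation on $X$, and $X^2$ is second countable. A strictly decreasing well-ordered chain of closed sets in a second countable space is countable, so the canonical tower has countable height.
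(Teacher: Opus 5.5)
The paper gives no proof of this proposition. It lists these as standard facts and cites Veech, Ellis--Glasner--Shapiro, Auslander--Glasner and van der Woude. Several of your arguments are correct: (iv) and (v), the lifting of distality through an isometric step, and the countability of the height (a strictly decreasing well-ordered chain of closed relations in the second countable space $X^2$). Citing Veech for point-distal $\Rightarrow$ HPI and for the exact $G_\delta$ property matches what the paper does.

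Your induction for HPI $\Rightarrow$ point-distal, however, has a genuine gap at the almost one-to-one step. With the hypothesis ``$X_\theta$ has a distal point'' you cannot pass through an almost one-to-one map $\pi:W\to Y$. Lifting needs a base point $y$ that is \emph{simultaneously} $L$-distal and in $Y_\pi$ (Lemma~\ref{lem:singleton-lifting}). If $|\pi^{-1}(y)|\geq2$, no point of that fibre is distal, because fibre-mates in a highly proximal extension are proximal. Your hypothesis only supplies the countable dense orbit of one distal point, and this orbit can miss the dense $G_\delta$ set $Y_\pi$ entirely. Passing ``along a net to the singleton-fibre locus'' does not help, since distality is not preserved under limits. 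The limit step has the same defect: it presupposes a compatible thread of distal points that the weak hypothesis never produces.

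The repair is to induct on the statement ``$\Dist_L(X_\theta)$ contains a dense $G_\delta$''.
\begin{itemize}
\item At an isometric step, $\pi^{-1}(\Dist_L(Y))\subseteq\Dist_L(W)$.
\item At an almost one-to-one step, $\pi^{-1}(\Dist_L(Y)\cap Y_\pi)\subseteq\Dist_L(W)$.
\item Both sets are residual by (iv) and Lemma~\ref{lem:residual-pullback}.
\item At a limit stage, intersect along a countable cofinal sequence, which your countable-height argument provides.
\end{itemize}
This is exactly the single-action case of the proof of Theorem~\ref{thm:strict-case}. It gives the forward half of (i) and the residuality in (ii) at once, so (ii) should rest on this strengthened induction rather than on ``the induction above''.

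In fact both inclusions are equalities, because a point lying over a non-distal point, or in a non-singleton fibre of an almost one-to-one step, is never distal. Hence for strictly HPI flows even the exact $G_\delta$ property follows, and Veech is needed only to pass to a highly proximal factor.

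Two smaller points remain. First, pushing distal points forward along a factor map, which you use in (i) and (iii), needs the standard lifting of proximal pairs via minimal idempotents, not just the sequential definition. Second, countable height is only half of the final assertion. That the canonical construction actually exhausts a strictly HPI flow, instead of stalling at a proper factor, must also be cited.
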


Recall that a factor map $\pi\colon X\to Y$ is \emph{semi-open} if the image of every nonempty open subset of $X$ has nonempty interior in $Y$.
We shall repeatedly use the following elementary consequences.

\begin{lemma}\label{lem:residual-pullback}
Let $f:W\to Y$ be a continuous semi-open surjection between compact metric
spaces.  If $A\subseteq Y$ is a dense $G_\delta$ set, then $f^{-1}(A)$ is a
dense $G_\delta$ subset of $W$.
\end{lemma}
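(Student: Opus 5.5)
The statement to prove is Lemma~\ref{lem:residual-pullback}: for a continuous semi-open surjection $f:W\to Y$ and a dense $G_\delta$ set $A\subseteq Y$, the preimage $f^{-1}(A)$ is a dense $G_\delta$ subset of $W$. The $G_\delta$ part is immediate from continuity, so the work is density. The plan is to reduce to open dense sets and use the Baire category theorem on $W$.

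First, write $A=\bigcap_{n}U_n$ with each $U_n\subseteq Y$ open. Since $A$ is dense, each $U_n$ is open and dense. By continuity each $f^{-1}(U_n)$ is open in $W$. Hence
\[
f^{-1}(A)=\bigcap_n f^{-1}(U_n)
\]
is a $G_\delta$ set.

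Second, I would show that $f^{-1}(U)$ is dense in $W$ whenever $U\subseteq Y$ is open and dense. Let $V\subseteq W$ be a nonempty open set. Because $f$ is semi-open, $f(V)$ has nonempty interior $O$. Since $U$ is dense, $O\cap U\neq\emptyset$. Pick $y\in O\cap U$. Then $y\in f(V)$, so $y=f(w)$ for some $w\in V$. This $w$ lies in $V\cap f^{-1}(U)$, which is therefore nonempty.

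Finally, $W$ is a compact metric space, hence a Baire space. So the countable intersection $\bigcap_n f^{-1}(U_n)$ of open dense sets is dense. There is no real obstacle here. The only point needing care is the step where semi-openness is used: without it, the image $f(V)$ could lie inside the nowhere dense complement of $U$.
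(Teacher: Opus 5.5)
Your proposal is correct and follows essentially the same route as the paper: write $A$ as a countable intersection of open dense sets, use semi-openness to show each preimage $f^{-1}(U_n)$ is open and dense, and conclude with the Baire theorem on $W$. Your explicit observation that each $U_n$ is dense because it contains the dense set $A$ is a small detail the paper leaves implicit.
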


\begin{proof}
Write $A=\bigcap_{n\geq1}V_n$ with every $V_n$ open and dense.  If
$U\subseteq W$ is nonempty and open, then $f(U)$ has nonempty interior, so
$V_n\cap\operatorname{int}f(U)\neq\varnothing$.  Hence
$U\cap f^{-1}(V_n)\neq\varnothing$.  Thus every $f^{-1}(V_n)$ is open and
dense, and the conclusion follows from the Baire theorem.
\end{proof}

A continuous surjection $\pi\colon X\to Y$ between compact Hausdorff spaces is
called \emph{irreducible} if no proper closed subset of $X$ maps onto $Y$.
Notice that irreducibility is a purely topological property of the underlying
continuous surjection $\pi$, rather than a dynamical property of the actions
on $X$ and $Y$.
\begin{proposition}[Auslander--Glasner {\cite[p.~733]{AuslanderGlasner}}]\label{prop:hp-irreducible}
An extension $\pi$ of minimal flows is highly proximal if and only if it is irreducible.
\end{proposition}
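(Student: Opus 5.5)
We prove the two implications separately, using the definition of highly proximal given above. Throughout, $\pi:(X,L)\to(Y,L)$ is an extension of minimal flows. The only dynamical input is minimality. The key observation is that for any nonempty open $U\subseteq X$, minimality of $X$ gives $X=\bigcup_{\ell\in L}\ell U$, and compactness then yields finitely many $\ell_1,\dots,\ell_k$ with $X=\bigcup_i \ell_i U$.

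\emph{Irreducible $\Rightarrow$ highly proximal.} Let $U\subseteq X$ be nonempty open. Choose a nonempty open $V$ with $\overline V\subseteq U$. Then $X\setminus V$ is a proper closed subset. By irreducibility, $\pi(X\setminus V)\neq Y$, so there is $y\in Y$ with $\pi^{-1}(y)\cap (X\setminus V)=\varnothing$, that is, $\pi^{-1}(y)\subseteq V\subseteq U$. So we may take $g=e$; the group element is not even needed in this direction.

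\emph{Highly proximal $\Rightarrow$ irreducible.} Suppose $F\subsetneq X$ is closed with $\pi(F)=Y$, and set $U=X\setminus F$, which is nonempty and open. By high proximality there are $y\in Y$ and $g\in L$ with $g\pi^{-1}(y)\subseteq U$. Equivariance gives $g\pi^{-1}(y)=\pi^{-1}(gy)$, since $g$ is a homeomorphism commuting with $\pi$. Hence the fibre $\pi^{-1}(gy)$ misses $F$, so $gy\notin\pi(F)$, contradicting $\pi(F)=Y$. Therefore no proper closed subset of $X$ maps onto $Y$.

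There is essentially no obstacle here. The one point needing care is the identity $g\pi^{-1}(y)=\pi^{-1}(gy)$, which follows directly from equivariance and bijectivity of $g$. Also note that this argument does not use the finite-cover remark from the first paragraph, nor metrizability. If the intended definition of high proximality were instead the Auslander--Glasner one, namely that fibres can be shrunk into any open set by elements of the enveloping semigroup (that is, $p\,\pi^{-1}(y)$ shrinks to a point), one would first show that it is equivalent to the definition used here. In that case the minimality and compactness remark is what converts a limiting statement into a statement about a single fibre.
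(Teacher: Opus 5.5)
Your proof is correct for the definition of high proximality used in the paper. The paper itself gives no argument and only cites Auslander--Glasner, so the useful comparison is between your self-contained route and what that citation supplies.

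With the paper's definition, each nonempty open $U$ need only contain \emph{some} fibre $g\pi^{-1}(y)=\pi^{-1}(gy)$. Both directions then reduce to the identity $\{y:\pi^{-1}(y)\subseteq U\}=Y\setminus\pi(X\setminus U)$. So, as you observe, the equivalence holds for any factor map of compact $L$-flows, and minimality is never used. Your auxiliary $V$ with $\overline V\subseteq U$ is also unnecessary, since $X\setminus U$ is already a proper closed set.

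The citation is what connects this to the formulation you mention at the end, in which the fibre over \emph{every} $y$ can be moved into every nonempty open set (or shrunk to a point). That is where minimality genuinely enters. However, it is minimality of $Y$ that upgrades ``some fibre'' to ``every fibre'', not your finite-cover observation. The set $Y\setminus\pi(X\setminus U)$ is open, because $\pi$ is a closed map, and it is nonempty. Hence every $L$-orbit in $Y$ meets it, so for each $y$ there is $g\in L$ with $g\pi^{-1}(y)\subseteq U$.

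So your argument fully proves the proposition as the paper defines the terms. The reference is what additionally guarantees that the paper's existential definition agrees with the standard one in the HPI literature the paper relies on.
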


\begin{lemma}\label{lem:singleton-fibres}
Let $\pi:W\to Y$ be an irreducible continuous surjection between compact
metric spaces.  Then
\[
 Y_\pi:=\{y\in Y:|\pi^{-1}(y)|=1\}
\]
is a dense $G_\delta$ subset of $Y$.
\end{lemma}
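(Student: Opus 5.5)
The plan is to exhibit $Y_\pi$ as a countable intersection of open sets and then to use irreducibility to show that each of these open sets is dense. Fix a compatible metric $d$ on $W$. For $n\geq1$ put
\[
 V_n:=\{y\in Y:\operatorname{diam}\pi^{-1}(y)<1/n\}.
\]
Clearly $Y_\pi=\bigcap_{n}V_n$, since a fibre is a singleton exactly when its diameter is $0$.

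\emph{Openness.} Each $V_n$ is open because $\pi$ is a closed map between compact metric spaces, which makes $y\mapsto\pi^{-1}(y)$ upper semicontinuous. Concretely, suppose $\operatorname{diam}\pi^{-1}(y)<1/n$. Choose $\varepsilon>0$ such that the closed $\varepsilon$-neighbourhood $N$ of $\pi^{-1}(y)$ still has diameter $<1/n$. The set $Y\setminus\pi(W\setminus\operatorname{int}N)$ is then an open neighbourhood of $y$, and every fibre over it lies in $N$. Hence $V_n$ is open, and $Y_\pi$ is a $G_\delta$.

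\emph{Density.} This is the main step, and it is where irreducibility enters. By the Baire theorem in the compact space $Y$, it suffices to show that each $V_n$ is dense. Let $O\subseteq Y$ be nonempty and open. Cover $W$ by finitely many open sets $U_1,\dots,U_k$, each of diameter $<1/n$.

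The key observation is that for a nonempty open $U\subseteq W$, irreducibility gives the set
\[
 U^\ast:=\{y:\pi^{-1}(y)\subseteq U\}=Y\setminus\pi(W\setminus U),
\]
which is open (since $\pi$ is closed) and nonempty. Indeed, if it were empty, then $W\setminus U$ would be a proper closed subset mapping onto $Y$, contradicting irreducibility.

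Apply this to $U=\pi^{-1}(O)$, which is open and nonempty. The restriction $\pi|:\overline{\pi^{-1}(O)}\to\overline O$ need not be irreducible, so instead I will argue directly. Some $U_i$ meets $\pi^{-1}(O)$, so $U:=U_i\cap\pi^{-1}(O)$ is a nonempty open set. By the observation there is a point $y$ whose whole fibre lies in $U$. Then $y\in O$, and $\operatorname{diam}\pi^{-1}(y)<1/n$, so $y\in V_n\cap O$.

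Thus every $V_n$ is open and dense, and $Y_\pi=\bigcap_nV_n$ is a dense $G_\delta$ subset of $Y$. The only nontrivial ingredient is the observation that irreducibility makes $U^\ast$ nonempty for every nonempty open $U$; the rest is routine compactness.
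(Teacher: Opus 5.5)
Your proof is correct and follows the paper's argument. Both use the sets $\{y:\operatorname{diam}\pi^{-1}(y)<1/n\}$, get openness from upper semicontinuity of the fibres, and get density from irreducibility, which puts a full fibre inside a small open subset of $\pi^{-1}(O)$. You also spell out the two steps the paper leaves implicit: the closed-map argument for openness, and why irreducibility makes $Y\setminus\pi(W\setminus U)$ nonempty. The finite cover is a harmless detour, since any small ball inside $\pi^{-1}(O)$ would do.
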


\begin{proof}
Fix a compatible metric on $W$ and put
\[
 U_n:=\{y\in Y:\operatorname{diam}\pi^{-1}(y)<1/n\}.
\]
Upper semicontinuity of the fibre map shows that $U_n$ is open.  Let
$V\subseteq Y$ be nonempty and open.  Choose a nonempty open set
$O\subseteq\pi^{-1}(V)$ of diameter less than $1/n$.  Irreducibility implies
that some full fibre is contained in $O$, and its image lies in $V\cap U_n$.
Thus every $U_n$ is dense, and $Y_\pi=\bigcap_nU_n$.
\end{proof}

\begin{lemma}\label{lem:aoto-lift}
Let $\pi:(W,L)\to(Y,L)$ be an almost one-to-one extension between minimal
systems.  If $(Y,L)$ is point-distal, then $(W,L)$ is point-distal.
\end{lemma}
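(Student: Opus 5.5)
We want to show: if $\pi:(W,L)\to(Y,L)$ is almost one-to-one between minimal systems and $(Y,L)$ is point-distal, then $(W,L)$ is point-distal. The plan is to find a point $y$ that is simultaneously $L$-distal in $Y$ and has a singleton fibre $\pi^{-1}(y)=\{w\}$, and then show that $w$ is $L$-distal in $W$.

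\textbf{Step 1: finding $y$.} By Proposition~\ref{prop:standard-HPI}(ii), $\Dist_L(Y)$ is a dense $G_\delta$ subset of $Y$. An almost one-to-one extension of minimal flows is highly proximal, hence irreducible by Proposition~\ref{prop:hp-irreducible}. Lemma~\ref{lem:singleton-fibres} then shows that $Y_\pi$ is a dense $G_\delta$ subset of $Y$. (Alternatively, $Y_\pi$ is nonempty by hypothesis and $L$-invariant, and the diameter argument of Lemma~\ref{lem:singleton-fibres} gives residuality directly.) By the Baire category theorem, $\Dist_L(Y)\cap Y_\pi\neq\varnothing$, so we may pick $y$ in this intersection and let $w$ be the unique point with $\pi^{-1}(y)=\{w\}$.

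\textbf{Step 2: $w$ is $L$-distal.} Suppose $w'\in W$ is $L$-proximal to $w$, so that $d(\ell_n w,\ell_n w')\to0$ for some sequence $\ell_n\in L$.
\begin{itemize}
\item By uniform continuity of $\pi$, we get $d(\ell_n\pi(w),\ell_n\pi(w'))\to0$, so $\pi(w')$ is proximal to $y$.
\item Since $y$ is distal, $\pi(w')=y$.
\item Since the fibre over $y$ is $\{w\}$, we conclude $w'=w$.
\end{itemize}
Hence $w\in\Dist_L(W)$. As $W$ is minimal, $(W,L)$ is point-distal.

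\textbf{Main obstacle.} The only delicate point is Step 1: we need a distal point of $Y$ lying in the singleton-fibre set, not merely some singleton fibre. This is exactly why we use that both sets are residual and intersect them via Baire; once that is in hand, Step 2 is routine.
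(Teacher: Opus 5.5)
Your proposal is correct and follows the paper's proof exactly: choose $y\in\Dist_L(Y)\cap Y_\pi$ with $\pi^{-1}(y)=\{w\}$, and observe that anything proximal to $w$ projects to a point proximal to $y$, hence lies in the singleton fibre. You also spell out the Baire-category justification for $\Dist_L(Y)\cap Y_\pi\neq\varnothing$, which the paper leaves implicit.
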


\begin{proof}
Choose
\[
 y\in\Dist_L(Y)\cap Y_\pi,
 \qquad \pi^{-1}(y)=\{w\}.
\]
If $w'$ is $L$-proximal to $w$, then $\pi(w')$ is $L$-proximal to $y$;
hence $\pi(w')=y$ and therefore $w'=w$.
\end{proof}

 For the lifting
arguments and the later comparison of canonical towers, we also need the
relative equicontinuous structure associated with a factor map.

Let $q:(X,L)\to(Y,L)$ be an extension of minimal systems.  Its
\emph{relative regionally proximal relation} $Q_L(q)$ consists of the pairs
$(x,x')\in R_q$ for which there exist sequences $x_n\to x$, $x_n'\to x'$,
and $\ell_n\in L$ such that
\[
 q(x_n)=q(x_n')
 \quad\text{and}\quad
 d(\ell_nx_n,\ell_nx_n')\longrightarrow0.
\]
The \emph{relative equicontinuous structure relation} $E_L(q)$ is the
smallest closed equivalence relation containing $Q_L(q)$ and invariant under
the diagonal $L$-action on $X^2$.

An extension
$
 \pi:(Z,L)\to(Y,L)
$
is \emph{relatively equicontinuous} if, for every entourage $U$ of the
diagonal in $Z^2$, there exists an entourage $V$ such that
\[
 (z,z')\in V\cap R_\pi
 \quad\Longrightarrow\quad
 (\ell z,\ell z')\in U
 \quad\text{for every }\ell\in L.
\]
For extensions of compact metrizable minimal systems, relative
equicontinuity is equivalent to isometricity; see
\cite[Chapter~9]{Auslander}.

The following is standard; see \cite[p.~324]{AuslanderMcMahonWoudeWu}.
\begin{proposition}
\label{prop:maximal-isometric}
Let
$
q:(X,L)\to (Y,L)
$
be an extension of compact metrizable minimal systems. Then there exists,
uniquely up to isomorphism over $Y$, a largest $L$-isometric intermediate
factor
\[
X\xrightarrow{p_{\rm iso}} Z_{\rm iso}
 \xrightarrow{\pi_{\rm iso}}Y,
\qquad
q=\pi_{\rm iso}\circ p_{\rm iso},
\]
and
\[
Z_{\rm iso}=X/E_L(q),
\]
where $E_L(q)$ is the relative equicontinuous structure relation of $q$.

\end{proposition}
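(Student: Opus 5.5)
The plan is to prove two facts separately and then combine them. First, every $L$-isometric intermediate factor of $q$ collapses $E_L(q)$. Second, the quotient $Z_{\rm iso}:=X/E_L(q)$ is itself an $L$-isometric extension of $Y$. Together these say that $X/E_L(q)$ is the largest isometric intermediate factor. Uniqueness up to isomorphism over $Y$ is then automatic: two largest intermediate factors dominate each other over $Y$, hence coincide.

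First, $X/E_L(q)$ is a legitimate intermediate factor. $E_L(q)$ is a closed $L$-invariant equivalence relation contained in $R_q$, because $R_q$ is itself closed, invariant and contains $Q_L(q)$. Hence $X/E_L(q)$ is a compact metrizable minimal flow, and $q$ factors as $\pi_{\rm iso}\circ p_{\rm iso}$.

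\emph{Maximality.} Let $X\xrightarrow{p'}Z'\xrightarrow{\pi'}Y$ be intermediate with $\pi'$ isometric, and let $\rho$ be an invariant continuous relative metric on $R_{\pi'}$. Take $(x,x')\in Q_L(q)$ with witnesses $x_n\to x$, $x_n'\to x'$ and $\ell_n$, so that $q(x_n)=q(x_n')$ and $d(\ell_nx_n,\ell_nx_n')\to0$. Then $(p'x_n,p'x_n')\in R_{\pi'}$, and $\rho(p'x_n,p'x_n')=\rho(\ell_np'x_n,\ell_np'x_n')$. The pairs $(\ell_np'x_n,\ell_np'x_n')$ converge to the diagonal of $Z'$ by uniform continuity of $p'$. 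Since $\rho$ is continuous on the compact set $R_{\pi'}$ and vanishes on the diagonal, the right-hand side tends to $0$. By continuity, $\rho(p'x,p'x')=0$, so $p'x=p'x'$. Thus $Q_L(q)\subseteq R_{p'}$. Since $R_{p'}$ is a closed invariant equivalence relation, minimality of $E_L(q)$ gives $E_L(q)\subseteq R_{p'}$, and $p'$ factors through $X/E_L(q)$ over $Y$.

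\emph{Isometricity of $\pi_{\rm iso}$.} This is the main obstacle. I would use the characterization, valid for metrizable minimal extensions (\cite[Chapter~9]{Auslander}): $\pi$ is relatively equicontinuous, hence isometric, iff $E_L(\pi)$ is the diagonal, i.e.\ iff $Q_L(\pi)\subseteq\Delta$. So it suffices to show $Q_L(\pi_{\rm iso})=\Delta_{Z_{\rm iso}}$.

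Take $(z,z')\in Q_L(\pi_{\rm iso})$ with witnesses $z_n\to z$, $z_n'\to z'$ and $\ell_n$. Lift $z_n,z_n'$ to points $x_n,x_n'$ with $q(x_n)=q(x_n')$; this is possible since $\pi_{\rm iso}(z_n)=\pi_{\rm iso}(z_n')$. Pass to subsequences so that $(\ell_nx_n,\ell_nx_n')\to(u,u')$. Then $p_{\rm iso}u=p_{\rm iso}u'$, so $(u,u')\in E_L(q)$.

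The difficulty is that this does not put $(\lim x_n,\lim x_n')$ into $Q_L(q)$, only into a ``$Q$ relative to $E$''. I would try to close the gap in one of two ways. The first is to use semi-openness of $p_{\rm iso}$ (Proposition~\ref{prop:standard-HPI}(iv)) to readjust the lifts inside the fibres so that $\ell_nx_n$ and $\ell_nx_n'$ become close. The second is the standard transfinite argument: define $E^{(0)}=Q_L(q)$ and $E^{(\alpha+1)}$ as the closed invariant equivalence relation generated by the $Q$-relation computed modulo $E^{(\alpha)}$, and show that this increasing family stabilizes at a relation whose quotient has trivial $Q$. The maximality step above then shows that the stable relation is contained in $E_L(q)$, and hence equals it. If both routes become technical, I would fall back on the cited result \cite[p.~324]{AuslanderMcMahonWoudeWu} for exactly this point.
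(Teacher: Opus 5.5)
\paragraph{Where the gap is.} Your setup, uniqueness argument and maximality step are correct. The computation with the invariant relative metric shows that every isometric intermediate factor collapses $Q_L(q)$, and hence $E_L(q)$. The paper itself gives no argument here, only the citation \cite[p.~324]{AuslanderMcMahonWoudeWu}.

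The gap is in the other half, which carries all the content: you do not prove that $\pi_{\rm iso}\colon X/E_L(q)\to Y$ is isometric. What is needed is the lifting property $Q_L(\pi_{\rm iso})\subseteq(p_{\rm iso}\times p_{\rm iso})(Q_L(q))$.

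Your first route misses one ingredient: you must change the group elements, not only the lifts. Since $p_{\rm iso}$ need not be open, the nearby points $\ell_nz_n,\ell_nz_n'$ need not have nearby lifts. So no readjustment of lifts that keeps $\ell_n$ fixed can work in general.

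\paragraph{How to close it.} The fix uses minimality of $Z_{\rm iso}$.
\begin{enumerate}
\item Given $\varepsilon>0$, take a nonempty open $U\subseteq X$ of diameter $<\varepsilon$. Semi-openness of $p_{\rm iso}$ (its domain is minimal) gives $V:=\operatorname{int}p_{\rm iso}(U)\neq\varnothing$.
\item Minimality of $Z_{\rm iso}$ gives $t_1,\dots,t_m\in L$ with $Z_{\rm iso}=\bigcup_i t_iV$. Let $\delta$ be a Lebesgue number of this cover.
\item Suppose $\pi_{\rm iso}(w)=\pi_{\rm iso}(w')$ and $d(w,w')<\delta$. Then $w,w'\in t_iV\subseteq p_{\rm iso}(t_iU)$ for some $i$, so they have lifts $u,u'\in t_iU$. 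These satisfy $q(u)=q(u')$ automatically, because $q=\pi_{\rm iso}\circ p_{\rm iso}$. They also satisfy $d(t_i^{-1}u,t_i^{-1}u')<\varepsilon$.
\item Apply this with $w=\ell_nz_n$ and $w'=\ell_nz_n'$. The points $\ell_n^{-1}u,\ell_n^{-1}u'$ are lifts of $z_n,z_n'$ in a common $q$-fibre, and they become $\varepsilon$-close under the \emph{modified} element $t_i^{-1}\ell_n$.
\item A diagonal argument over $\varepsilon\to0$, together with compactness, yields $(x,x')\in Q_L(q)$ lying over $(z,z')$. Since $Q_L(q)\subseteq E_L(q)=R_{p_{\rm iso}}$, this forces $z=z'$.
\end{enumerate}
Thus $Q_L(\pi_{\rm iso})=\Delta$. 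A failure of relative equicontinuity would produce an off-diagonal pair in $Q_L(\pi_{\rm iso})$ by the usual compactness argument. Relative equicontinuity then gives isometricity by \cite[Chapter~9]{Auslander}.

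\paragraph{The transfinite route.} This alternative does not close the gap, because the inclusion you draw from the maximality step is reversed. That step shows that every isometric intermediate factor collapses $E_L(q)$. Applied to the stable relation $E^{(\gamma)}$, it gives $E_L(q)\subseteq E^{(\gamma)}$, not $E^{(\gamma)}\subseteq E_L(q)$.

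An induction using your maximality step does show that $X/E^{(\gamma)}$ is the largest isometric intermediate factor. So this route would prove existence and uniqueness. However, the formula $Z_{\rm iso}=X/E_L(q)$ is equivalent to the chain stopping already at $E_L(q)$, that is, to $Q_L(\pi_{\rm iso})$ being trivial. That is exactly the lifting statement above.

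Falling back on the citation would put you where the paper is. As written, though, your proposal establishes only the easy inclusion.
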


The next compactness observation will be used at every isometric lifting
stage.

\begin{lemma}\label{lem:fibre-metric}
Let $\pi:W\to Y$ be a continuous surjection, let $d$ be a compatible metric
on $W$, and let $\rho:R_\pi\to[0,\infty)$ be continuous and restrict to a
metric on every fibre.  For every sequence $(u_n,v_n)\in R_\pi$,
\[
 d(u_n,v_n)\longrightarrow0
 \quad\Longleftrightarrow\quad
 \rho(u_n,v_n)\longrightarrow0.
\]
\end{lemma}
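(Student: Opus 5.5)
The plan is a compactness argument by contradiction, run once in each direction. Both implications have the same shape: assume a sequence in $R_\pi$ along which one of the two quantities tends to $0$ while the other does not. Pass to a subsequence where the other quantity stays at least some $\varepsilon>0$. Then use compactness of $W$ (implicitly $W$ is a compact metric space, as everywhere in this section) to extract a further subsequence with $u_n\to u$ and $v_n\to v$. Since $R_\pi$ is closed in $W^2$ (by continuity of $\pi$), the limit $(u,v)$ lies in $R_\pi$, so $\rho(u,v)$ is defined. By continuity of $\rho$ on $R_\pi$ we also get $\rho(u_n,v_n)\to\rho(u,v)$.

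\emph{Forward direction.} Suppose $d(u_n,v_n)\to0$ but, along a subsequence, $\rho(u_n,v_n)\geq\varepsilon$. Passing to convergent subsequences gives $u=v$, because $d(u,v)=\lim d(u_n,v_n)=0$. Since $\rho$ is a metric on the fibre $\pi^{-1}(\pi(u))$, we have $\rho(u,u)=0$. But $\rho(u,v)=\lim\rho(u_n,v_n)\geq\varepsilon$, which is a contradiction.

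\emph{Reverse direction.} Suppose $\rho(u_n,v_n)\to0$ but, along a subsequence, $d(u_n,v_n)\geq\varepsilon$. In the limit, $d(u,v)\geq\varepsilon$, so $u\neq v$. At the same time, $\rho(u,v)=\lim\rho(u_n,v_n)=0$. Since $\rho$ restricted to the fibre containing both $u$ and $v$ is a genuine metric, this forces $u=v$, a contradiction.

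The only point needing care is that $\rho$ is continuous on all of $R_\pi$ jointly, not merely fibrewise. This is precisely the hypothesis, and it is what lets us pass to the limit even when the fibres $\pi^{-1}(\pi(u_n))$ vary with $n$. There is no real obstacle here. Note that compatibility of $\rho$ with the fibre topology is not even needed; positivity off the diagonal together with joint continuity suffice.
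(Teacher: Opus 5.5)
Your proposal is correct and follows essentially the same compactness argument as the paper: the reverse direction is identical (a limit pair in the closed set $R_\pi$ with $u\neq v$ but $\rho(u,v)=0$). The forward direction differs only cosmetically, since the paper applies uniform continuity of $\rho$ on the compact set $R_\pi$ to compare $(u_n,v_n)$ with $(u_n,u_n)$ directly, whereas you run the equivalent sequential-compactness contradiction.
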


\begin{proof}
If $d(u_n,v_n)\to0$, uniform continuity of $\rho$ on the compact set $R_\pi$
and 
$\rho(u_n,u_n) \to 0$ give $\rho(u_n,v_n)\to0$.  Conversely, if
$\rho(u_n,v_n)\to0$ but $d(u_n,v_n)\not\to0$, a convergent subsequence in
$R_\pi$ 
tends to a distinct pair on which $\rho$ vanishes, contradicting
that $\rho$ is a metric on each fibre.
\end{proof}

\section{Joint point-distality}\label{sec:joint-point-distality}
In this section, we  prove Theorems \ref{thm:intro-joint} and \ref{thm:intro-strictification}.  The main issue is that the HPI structure theorem is
formulated for a single acting group: a canonical HPI tower for the
$G$-action is not, a priori, a tower of factors for the commuting
$H$-action.  We first show that its canonical successor factors are
preserved by the $H$-action.  This makes the whole canonical $G$-HPI
tower into a tower of common $G$- and $H$-factors.
Once this has been established, joint distal points can be propagated
through the tower. 

\subsection{Characteristic stages and primitive lifting}
We begin with the functoriality of the two maximal intermediate factors under
commuting actions.

\begin{proposition}
\label{prop:characteristic-successors}
Let
$
 q:(X,\Gamma)\to(Y,\Gamma)
$
be a factor map.  Then the maximal $G$-isometric intermediate factor
$\mathsf I_G(q)$ and the canonical maximal $G$-highly proximal intermediate
factor $\mathsf H_G(q)$ are invariant under the $H$-action.  In particular,
they are common $\Gamma$-factors.

Symmetrically, $\mathsf I_H(q)$ and $\mathsf H_H(q)$ are invariant under the
$G$-action and hence are common $\Gamma$-factors.
\end{proposition}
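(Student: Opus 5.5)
The plan is a transport-of-structure argument: each $h\in H$ is a homeomorphism of $X$ commuting with $G$ and descending to $Y$, and both intermediate factors are characterized intrinsically by $q$ and the $G$-action. Fix $h\in H$. Since $q$ is a $\Gamma$-factor map, $h$ acts on $Y$ with $q\circ h=h\circ q$. Hence $h\times h$ maps $R_q$ onto itself.

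For $\mathsf I_G(q)$, we use Proposition~\ref{prop:maximal-isometric}, which identifies $\mathsf I_G(q)$ with $X/E_G(q)$. It therefore suffices to show that $(h\times h)E_G(q)=E_G(q)$.

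1. First check $Q_G(q)$. Take $(x,x')\in Q_G(q)$, witnessed by $x_n\to x$, $x_n'\to x'$ with $q(x_n)=q(x_n')$, and $g_n\in G$ with $d(g_nx_n,g_nx_n')\to0$.
2. Then $hx_n\to hx$ and $hx_n'\to hx'$, and $q(hx_n)=hq(x_n)=q(hx_n')$.
3. By commutativity, $d(g_nhx_n,g_nhx_n')=d(hg_nx_n,hg_nx_n')$. This tends to $0$ because $h$ is uniformly continuous on compact $X$.
4. So $(h\times h)Q_G(q)\subseteq Q_G(q)$. Applying the same argument to $h^{-1}$ gives equality.
5. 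Now $(h\times h)E_G(q)$ is a closed equivalence relation containing $Q_G(q)$. It is $G$-invariant because $h$ commutes with $G$. By minimality of $E_G(q)$, it contains $E_G(q)$, and symmetrically with $h^{-1}$ we get equality.

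Thus $h$ descends to a homeomorphism of $X/E_G(q)$ commuting with $G$ and compatible with both maps to $X$ and $Y$, which makes $\mathsf I_G(q)$ a common $\Gamma$-factor. An alternative route avoids $E_G(q)$: transport the $G$-invariant relative metric $\rho$ of $Z_{\rm iso}\to Y$ via $h$. Composing $X\to Z_{\rm iso}$ with $h$ yields another $G$-isometric intermediate factor, and maximality forces it to coincide with the original.

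For $\mathsf H_G(q)$, the obstacle is that we must use whatever canonical construction of the maximal highly proximal intermediate factor the paper adopts. The key input is Proposition~\ref{prop:hp-irreducible}: for minimal flows, $G$-high proximality of an extension is equivalent to irreducibility, which is purely topological.

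1. Let $X\to W\xrightarrow{\sigma} Y$ be the canonical maximal $G$-highly proximal intermediate factor, realized as the quotient of $X$ by a closed $G$-invariant equivalence relation $S\subseteq R_q$.
2. Consider the relation $(h\times h)S$. It is closed, $G$-invariant by commutativity, and contained in $R_q$.
3. The quotient map $X/(h\times h)S\to Y$ is conjugate, via the homeomorphism induced by $h$ on $X$ and on $Y$, to $\sigma$. It is therefore still irreducible, hence $G$-highly proximal by Proposition~\ref{prop:hp-irreducible}, since $G$-minimality is preserved.
4. Provided the canonical choice is defined intrinsically from $(q,G)$, for instance as the largest such relation or via the canonical construction of Auslander--Glasner, this forces $(h\times h)S=S$.

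The main difficulty is that maximal highly proximal intermediate factors are generally not unique, which is why the statement says ``canonical''. So the crux is to verify that the canonical construction is natural under homeomorphisms commuting with $G$ and covering a homeomorphism of $Y$. I would first try to show naturality for every automorphism of the pair $(q,G)$. If the construction is given by a universal property (e.g.\ the join or largest element of a lattice of admissible relations), naturality is automatic. If it is given by an explicit construction on the hyperspace $2^X$, for example via the map $y\mapsto$ the set of limits of $G$-translates of fibres, naturality follows because $h$ induces a homeomorphism of $2^X$ commuting with $G$.

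The symmetric statement for $H$ follows by swapping the roles of $G$ and $H$.
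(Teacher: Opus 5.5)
Your proposal follows the paper's proof essentially step for step: $h\times h$ preserves $Q_G(q)$ and hence $E_G(q)$ (you spell out the passage to $E_G(q)$ via its minimality, which the paper leaves implicit). For $\mathsf H_G(q)$ you transport $R_p$ to $(h\times h)R_p$, observe that the induced map to $Y$ is conjugate to the original one and so still irreducible, and conclude by uniqueness of the canonical maximal member, exactly as the paper does.

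One correction: your worry that maximal highly proximal intermediate factors need not be unique is unfounded. The join of two, or countably many (which suffices by a Lindel\"of argument in $X\times X$), highly proximal intermediate factors is again almost one-to-one over $Y$, because its fibres are singletons over the intersection of the residual singleton-fibre sets. So the maximal one is unique and corresponds to the \emph{smallest} (not largest) admissible relation $S\subseteq R_q$. This is exactly the uniqueness the paper invokes, and it makes your ``provided'' clause automatic.
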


\begin{proof}
We first consider the maximal isometric factor.  Let $h\in H$.  Since the
$G$- and $H$-actions commute and $q$ is $H$-equivariant,
\[
 (h\times h)Q_G(q)=Q_G(q).
\]
Indeed, suppose that $x_n\to x$, $x_n'\to x'$, and $g_n\in G$ witness
$(x,x')\in Q_G(q)$.  Then
\[
 q(hx_n)=h q(x_n)=h q(x_n')=q(hx_n'),
\]
and, by commutativity and uniform continuity of the fixed homeomorphism $h$,
\[
 d(g_nhx_n,g_nhx_n')
 =d(hg_nx_n,hg_nx_n')\longrightarrow0.
\]
Applying the same argument to $h^{-1}$ gives the reverse inclusion.  Hence
\[
 (h\times h)E_G(q)=E_G(q).
\]
Since
$
 \mathsf I_G(q)=X/E_G(q),
$
the homeomorphism $h$ descends to $\mathsf I_G(q)$.

We next consider the canonical maximal highly proximal intermediate factor.
Write
\[
 X\xrightarrow{p}Z\xrightarrow{\pi}Y,
 \qquad q=\pi p,
\]
where $Z=\mathsf H_G(q)$ and $\pi$ is highly proximal.  For $h\in H$, put
\[
 R_p^h:=(h\times h)R_p,
 \qquad Z^h:=X/R_p^h.
\]
The map
\[
 \Phi_h:Z\longrightarrow Z^h,
 \qquad \Phi_h(p(x))=[hx]_{R_p^h},
\]
is a homeomorphism.  The induced map $\pi^h:Z^h\to Y$ satisfies
\[
 \pi^h\Phi_h=h\pi.
\]
Thus $\pi^h$ is highly proximal, because highly proximality is equivalent to
irreducibility and irreducibility is invariant under topological conjugacy;
see Proposition~\ref{prop:hp-irreducible}.

It follows that $h$ permutes the highly proximal intermediate factors of $q$.
By uniqueness and maximality of the canonical maximal member,
\[
 (h\times h)R_p=R_p.
\]
Therefore $h$ descends to $Z=\mathsf H_G(q)$.  The symmetric assertions follow
by interchanging $G$ and $H$.
\end{proof}

We may apply 
the proposition above to the canonical HPI tower.
\begin{corollary}
\label{cor:canonical-characteristic}
Let $(X,G)$ be a minimal strictly HPI system, and let
\begin{equation}\label{eq:canonical-tower}
 \{*\}=X_0
 \longleftarrow X_1
 \longleftarrow\cdots
 \longleftarrow X_\alpha
 \longleftarrow\cdots
 \longleftarrow X_\eta=X
\end{equation}
be its canonical HPI tower. If a group $H$ acts on $X$ and commutes with
the $G$-action, then the $H$-action descends to every $X_\alpha$.
Moreover, every bonding map in \eqref{eq:canonical-tower} is
$\Gamma$-equivariant, and the induced $G$- and $H$-actions commute
at every stage.
\end{corollary}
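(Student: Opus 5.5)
The plan is a transfinite induction on $\alpha\le\eta$. The statement to prove at stage $\alpha$ is: the $H$-action on $X$ descends to $X_\alpha$, meaning the closed equivalence relation $R_\alpha:=\{(x,x'):\pi_\alpha(x)=\pi_\alpha(x')\}$ on $X$ is $(h\times h)$-invariant for every $h\in H$. Here $\pi_\alpha:X\to X_\alpha$ is the canonical projection.

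It is convenient to regard every $X_\alpha$ as the quotient $X/R_\alpha$. Then every $X_\alpha$ is a factor of $X$, and the bonding maps $X_\beta\to X_\alpha$ for $\alpha<\beta$ are induced by the inclusions $R_\beta\subseteq R_\alpha$. If $R_\alpha$ is $H$-invariant, then $H$ acts on $X/R_\alpha$ by homeomorphisms, because $h$ maps $R_\alpha$-classes onto $R_\alpha$-classes and is a homeomorphism of $X$. Commutation with $G$ passes to the quotient, since $ghx=hgx$ on $X$. Equivariance of the bonding maps is automatic, because all actions are induced from $X$ and $R_\beta\subseteq R_\alpha$.

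For the induction itself:
\begin{enumerate}[label=\textup{(\alph*)}]
\item The base case $\alpha=0$ is trivial, since $R_0=X\times X$.
\item Suppose $\alpha$ is a limit ordinal. Since $X_\alpha=\varprojlim_{\beta<\alpha}X_\beta$, we have $R_\alpha=\bigcap_{\beta<\alpha}R_\beta$. An intersection of $H$-invariant relations is $H$-invariant.
\item Suppose the stage is a successor $\alpha+1$, and $R_\alpha$ is $H$-invariant. Then $q_\alpha:X\to X_\alpha$ is a $\Gamma$-factor map. By canonicity of the tower, $X_{\alpha+1}$ is either $\mathsf H_G(q_\alpha)$ or $\mathsf I_G(q_\alpha)$, viewed as intermediate factors of $q_\alpha$ computed in $X$. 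Proposition~\ref{prop:characteristic-successors} applied to $q_\alpha$ says exactly that the corresponding relation on $X$ is $H$-invariant. Hence $R_{\alpha+1}$ is $H$-invariant.
\end{enumerate}
Finally, $X_\eta=X$ with $R_\eta=\Delta$, and the descended action there is the original action.

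The main obstacle is the successor step, specifically matching the canonical tower's definition with the factors appearing in Proposition~\ref{prop:characteristic-successors}. The canonical tower is usually defined by taking maximal operations over $X_\alpha$ relative to the whole extension $X\to X_\alpha$. I need the successor to be literally $\mathsf H_G(q_\alpha)$ or $\mathsf I_G(q_\alpha)$ for the map $q_\alpha$ with domain $X$. The choice between the highly proximal and isometric step must also be determined intrinsically, for instance by alternating, or by taking the HP step whenever it is nontrivial. Any such rule is dictated by the $G$-structure of $q_\alpha$ alone, so it involves no choices that could break $H$-symmetry.

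I will therefore make explicit that the canonical construction is functorial in $q_\alpha$. Concretely, isomorphic copies over $X_\alpha$ are identified with the same relation on $X$. Proposition~\ref{prop:characteristic-successors} was proved precisely in that form: $(h\times h)E_G(q)=E_G(q)$ in the isometric case, and $(h\times h)R_p=R_p$ in the highly proximal case. With this identification the induction closes. Countable ordinal height from Proposition~\ref{prop:standard-HPI} is not even needed; the argument works for any ordinal $\eta$.
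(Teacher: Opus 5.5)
Your proposal is correct and follows the paper's proof: a transfinite induction, applying Proposition~\ref{prop:characteristic-successors} to $q_\alpha$ at successor stages and passing to the inverse limit at limit stages. Encoding each $X_\alpha$ as $X/R_\alpha$ with $R_\alpha$ an $H$-invariant closed relation, so that limit stages become intersections, is only tidier bookkeeping of the same argument; and since both primitive operations yield $H$-invariant relations, the rule choosing between them does not affect the argument.
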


\begin{proof}
We proceed by transfinite induction. The assertion is trivial at the
one-point stage. Suppose that the $H$-action has descended to $X_\alpha$
and let
$
 q_\alpha:X\to X_\alpha
$
be the canonical factor map.

At a successor stage, the next factor is either the maximal
$G$-isometric intermediate factor of $q_\alpha$ or the canonical maximal
$G$-highly proximal intermediate factor. Proposition
\ref{prop:characteristic-successors} shows in either case that this
factor is invariant under the $H$-action.

At a limit ordinal, the compatible $H$-actions on the preceding stages
induce an action on their inverse limit. The induced action commutes with
the $G$-action because the same is true at every preceding stage.
\end{proof}

The characteristic property allows the second action to be followed along
the entire canonical tower.  We next record the pointwise strengthening
needed to lift joint distality through its two primitive successor maps.  The
following lemma treats the isometric case and will also be used in the strict
HPI argument.
\begin{lemma}\label{lem:mixed-isometric}
Let $G$ and $H$ act commutatively on compact metrizable spaces $W$ and $Y$, let $\Gamma:=\langle G,H\rangle$, 
and let
$
 \pi:(W,\Gamma)\to(Y,\Gamma)
$
be an equivariant extension.  Suppose that $\pi$ is $G$-isometric.  Let
$w\in W$ and $y=\pi(w)$.  If $y$ is $\Gamma$-distal and $w$ is
$H$-distal, then $w$ is $\Gamma$-distal.
\end{lemma}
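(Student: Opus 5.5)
Suppose $w'\in W$ is $\Gamma$-proximal to $w$. We will show that $w'=w$, and we do this in three steps.

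First, since $\pi$ is $\Gamma$-equivariant, $\pi(w')$ is $\Gamma$-proximal to $y$. Because $y$ is $\Gamma$-distal, $\pi(w')=y$, so $(w,w')\in R_\pi$.

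Second, choose $\gamma_n\in\Gamma$ with $d(\gamma_nw,\gamma_nw')\to0$. Since $\Gamma=\langle G,H\rangle$ and the actions commute, each $\gamma_n$ factors as $\gamma_n=g_nh_n$ with $g_n\in G$ and $h_n\in H$. Let $\rho$ be a $G$-invariant continuous relative metric on $R_\pi$. Since $R_\pi$ is $\Gamma$-invariant, each pair $(\gamma_nw,\gamma_nw')$ lies in $R_\pi$, and Lemma~\ref{lem:fibre-metric} gives $\rho(\gamma_nw,\gamma_nw')\to0$. By $G$-invariance,
\[
 \rho(h_nw,h_nw')=\rho(g_n^{-1}\gamma_nw,g_n^{-1}\gamma_nw')=\rho(\gamma_nw,\gamma_nw')\longrightarrow0 .
\]
The pairs $(h_nw,h_nw')$ also lie in $R_\pi$, so applying Lemma~\ref{lem:fibre-metric} in the reverse direction yields $d(h_nw,h_nw')\to0$. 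Hence $w'$ is $H$-proximal to $w$.

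Third, $w$ is $H$-distal by hypothesis, so $w'=w$. Therefore $w$ is $\Gamma$-distal.

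The one point needing care is the second step. The invariance of $\rho$ is only under $G$, so we must strip off exactly the $G$-part of each $\gamma_n$. This is legitimate because commutativity lets every element of $\Gamma$ be written as $g h$, and Lemma~\ref{lem:fibre-metric} converts between $d$-smallness and $\rho$-smallness along the fibres in both directions. The lemma needs no minimality assumption, and the isometric property is used only through the existence of $\rho$.
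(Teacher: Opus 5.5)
Your proof is correct and follows the paper's argument essentially step for step. You project to $Y$ to place $(w,w')$ in $R_\pi$, then use Lemma~\ref{lem:fibre-metric} together with the $G$-invariance of $\rho$ to strip off the $G$-part of $\gamma_n=g_nh_n$. Converting back gives $H$-proximality of $(w,w')$, and the $H$-distality of $w$ finishes the argument.
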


\begin{proof}
Let $\rho$ be a continuous $G$-invariant relative metric on $R_\pi$, and
suppose that $w'$ is $\Gamma$-proximal to $w$.  There are sequences
$g_n\in G$ and $h_n\in H$ such that
\begin{equation}\label{eq:2026826}
    \lim_{n\to\infty}d(g_nh_nw,g_nh_nw')=0.
\end{equation}
 After projection to $Y$, the pair $(y,\pi(w'))$ is jointly proximal.  Since
$y$ is jointly distal, $\pi(w')=y$.  Thus
$(h_nw,h_nw')\in R_\pi$ for every $n$, and
Lemma~\ref{lem:fibre-metric}, followed by $G$-invariance and \eqref{eq:2026826}, gives
\[
 \rho(h_nw,h_nw')
 =\rho(g_nh_nw,g_nh_nw')\to0,\qquad\text{as } n\to\infty.
\]
A second application of Lemma~\ref{lem:fibre-metric} yields
$d(h_nw,h_nw')\to0$.  Hence $w'$ is $H$-proximal to $w$, and the
$H$-distality of $w$ gives $w'=w$.
\end{proof}

The mixed isometric lifting lemma above is the local mechanism at an
isometric successor stage.  At a highly proximal successor we use the
following elementary observation.

\begin{lemma}
\label{lem:singleton-lifting}
Let $\pi:(W,L)\to(Y,L)$ be an extension.  Suppose that $y\in Y$ is
$L$-distal and that $\pi^{-1}(y)=\{w\}$.  Then $w$ is $L$-distal.
\end{lemma}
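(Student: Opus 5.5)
The plan is to argue exactly as in the proof of Lemma~\ref{lem:aoto-lift}, but without needing any genericity, since the singleton fibre is now given by hypothesis. I would use only two facts: equivariance of $\pi$, and continuity of $\pi$ on the compact space $W$. Suppose $w'\in W$ is $L$-proximal to $w$. Then there is a sequence $\ell_n\in L$ with
\[
 d_W(\ell_nw,\ell_nw')\longrightarrow0 .
\]

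The first step is to push proximality down to $Y$. Since $\pi$ is uniformly continuous on the compact metric space $W$, we get
\[
 d_Y(\pi(\ell_nw),\pi(\ell_nw'))\longrightarrow0 .
\]
By equivariance this says $d_Y(\ell_ny,\ell_n\pi(w'))\to0$. Hence $\pi(w')$ is $L$-proximal to $y$.

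The second step uses the hypotheses on $y$. Because $y$ is $L$-distal, $\pi(w')=y$. This means $w'\in\pi^{-1}(y)=\{w\}$, so $w'=w$. Thus the only point proximal to $w$ is $w$ itself, and $w$ is $L$-distal.

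There is no real obstacle here. The only point to check is that uniform continuity of $\pi$ transfers the convergence $d(\ell_nw,\ell_nw')\to0$ along an arbitrary sequence in $L$. This holds because all spaces are compact metrizable, as fixed at the start of Section~\ref{sec:preliminaries}.
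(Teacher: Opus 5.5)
Your proof is correct and is the same argument as the paper's. You push the proximal pair down by equivariance, use $L$-distality of $y$ to get $\pi(w')=y$, and conclude $w'=w$ from the singleton fibre. The only difference is that you spell out the uniform-continuity step, which the paper leaves implicit.
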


\begin{proof}
If $w'$ is $L$-proximal to $w$, then $\pi(w')$ is $L$-proximal to $y$.
Thus $\pi(w')=y$, and the singleton-fibre assumption gives $w'=w$.
\end{proof}
\subsection{The strict case and equivariant strictification}

We now combine the characteristic-stage result with the two lifting lemmas.
The strict case is the inductive core of the proof.

\begin{theorem}
\label{thm:strict-case}
Let $G$ and $H$ be groups acting commutatively on a compact metrizable space
$X$, and put $\Gamma:=\langle G,H\rangle$.  Assume that $(X,G)$ is minimal
strictly HPI and that $(X,H)$ is minimal HPI.  Then the set of
$\Gamma$-distal points contains a dense $G_\delta$ subset of $X$.  In
particular, the joint action is minimal point-distal, and hence HPI.
\end{theorem}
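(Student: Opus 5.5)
Our approach is transfinite induction along the canonical HPI tower \eqref{eq:canonical-tower} of $(X,G)$. By Corollary~\ref{cor:canonical-characteristic}, every stage $X_\alpha$ is a common $\Gamma$-factor of $X$, and every bonding map $X_{\alpha+1}\to X_\alpha$ is $\Gamma$-equivariant. Each $(X_\alpha,H)$ is a factor of the minimal HPI flow $(X,H)$, so it is minimal. By Proposition~\ref{prop:standard-HPI}(iii) it is also HPI, hence point-distal, and so $\Dist_H(X_\alpha)$ is a dense $G_\delta$ by Proposition~\ref{prop:standard-HPI}(ii). The inductive claim we aim to prove is:
\[
 \text{for every }\alpha\le\eta,\ \Dist_\Gamma(X_\alpha)\text{ contains a dense }G_\delta\text{ subset }D_\alpha\text{ of }X_\alpha .
\]
At $\alpha=0$ this is trivial. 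At $\alpha=\eta$ it gives the theorem. The ``in particular'' clause then follows: $(X,\Gamma)$ is minimal because $(X,G)$ already is, so it is point-distal, hence HPI by Proposition~\ref{prop:standard-HPI}(i).

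\emph{Successor, isometric case.} Here $\pi:X_{\alpha+1}\to X_\alpha$ is $G$-isometric and $\Gamma$-equivariant. We take
\[
 D_{\alpha+1}:=\pi^{-1}(D_\alpha)\cap\Dist_H(X_{\alpha+1}).
\]
The map $\pi$ is semi-open by Proposition~\ref{prop:standard-HPI}(iv), since its domain is $G$-minimal. So $\pi^{-1}(D_\alpha)$ is a dense $G_\delta$ by Lemma~\ref{lem:residual-pullback}. Intersecting two dense $G_\delta$ sets gives a dense $G_\delta$ by Baire. Lemma~\ref{lem:mixed-isometric} then shows that every point of $D_{\alpha+1}$ is $\Gamma$-distal.

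\emph{Successor, highly proximal case.} Here $\pi$ is irreducible by Proposition~\ref{prop:hp-irreducible}. Lemma~\ref{lem:singleton-fibres} gives the dense $G_\delta$ set $Y_\pi\subseteq X_\alpha$ of points with singleton fibres. Put $A:=D_\alpha\cap Y_\pi$, which is a dense $G_\delta$. Lemma~\ref{lem:singleton-lifting} with $L=\Gamma$ shows that $\pi^{-1}(A)$ consists of $\Gamma$-distal points. Again $\pi^{-1}(A)$ is a dense $G_\delta$ by semi-openness and Lemma~\ref{lem:residual-pullback}.

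\emph{Limit stage.} This is the step I expect to be the main obstacle. Let $\lambda$ be a limit ordinal, so $X_\lambda=\varprojlim_{\alpha<\lambda}X_\alpha$. Write $p_\alpha:X_\lambda\to X_\alpha$; these maps are semi-open by minimality. The height $\eta$ is a countable ordinal, so we can choose a cofinal sequence $\alpha_1<\alpha_2<\cdots$ in $\lambda$ and set
\[
 D_\lambda:=\bigcap_n p_{\alpha_n}^{-1}(D_{\alpha_n}).
\]
This is a countable intersection of dense $G_\delta$ sets, hence a dense $G_\delta$ by Baire. Now take $x\in D_\lambda$, and suppose $x'$ is $\Gamma$-proximal to $x$. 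Then $p_{\alpha_n}(x')$ is $\Gamma$-proximal to $p_{\alpha_n}(x)$, which is $\Gamma$-distal, so $p_{\alpha_n}(x')=p_{\alpha_n}(x)$ for every $n$. The projections along a cofinal sequence separate points of the inverse limit, so $x'=x$. Countability of the tower height (the last assertion of Proposition~\ref{prop:standard-HPI}) is exactly what keeps this intersection countable. We should also check that at every stage the relevant maps are semi-open with respect to $G$-minimality, which they are.
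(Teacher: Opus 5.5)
Your proof is correct and is essentially the paper's argument: induction along the canonical $G$-HPI tower, made $\Gamma$-equivariant by Corollary~\ref{cor:canonical-characteristic}, with Lemma~\ref{lem:mixed-isometric} at isometric steps, Lemma~\ref{lem:singleton-lifting} at highly proximal steps, and point separation at inverse limits. The only difference is bookkeeping: the paper intersects $q_\alpha^{-1}(\Dist_H(X_\alpha))$ and $q_\alpha^{-1}(C_\alpha)$ over all the countably many stages inside $X$ at once and then follows the coordinates of a single point, whereas you build a residual set at each stage, which is why you need cofinal sequences at limit ordinals.
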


\begin{proof}
Let
\[
 \{*\}=X_0\longleftarrow X_1\longleftarrow\cdots
 \longleftarrow X_\alpha\longleftarrow\cdots
 \longleftarrow X_\eta=X
\]
be the canonical $G$-HPI tower, and let $q_\alpha:X\to X_\alpha$ be the
canonical factor maps.  By Corollary~\ref{cor:canonical-characteristic}, the
$H$-action descends to every stage.

Since HPI is inherited by metrizable factors (Proposition \ref{prop:standard-HPI} (iii)), every $(X_\alpha,H)$ is HPI.
Consequently,
$
 D_\alpha:=\Dist_H(X_\alpha)
$
is a dense $G_\delta$ subset of $X_\alpha$.  If
$\pi_\alpha:X_{\alpha+1}\to X_\alpha$ is a highly proximal successor map,
put
\[
 C_\alpha:=\{y\in X_\alpha:|\pi_\alpha^{-1}(y)|=1\}.
\]
By Proposition~\ref{prop:hp-irreducible} and
Lemma~\ref{lem:singleton-fibres}, $C_\alpha$ is a dense $G_\delta$ subset of
$X_\alpha$.

By Proposition \ref{prop:standard-HPI} (iv), every $q_\alpha$ is semi-open.  Since the tower has countable height,
Lemma~\ref{lem:residual-pullback} and the Baire theorem show that
\begin{equation}\label{eq:strict-residual-choice}
 \mathcal R=
 \bigcap_{\alpha\leq\eta}q_\alpha^{-1}(D_\alpha)
 \cap
 \bigcap_{\substack{\alpha<\eta\\
          \pi_\alpha\text{ highly proximal}}}
 q_\alpha^{-1}(C_\alpha)
\end{equation}
is a dense $G_\delta$ subset of $X$.  Fix $x\in\mathcal R$ and write
$x_\alpha=q_\alpha(x)$.

We prove by transfinite induction that $x_\alpha$ is $\Gamma$-distal.  This is
trivial at $X_0$.  At an isometric successor stage, Lemma~\ref{lem:mixed-isometric} gives that
$x_\alpha$ is $\Gamma$-distal by induction and $x_{\alpha+1}$ is $H$-distal.  At a highly proximal
successor stage, the choice of $x$ gives
$
 \pi_\alpha^{-1}(x_\alpha)=\{x_{\alpha+1}\},
$
and Lemma~\ref{lem:singleton-lifting} applies.  Since inverse
limit coordinates separate points, every point of $\mathcal R$ is
$\Gamma$-distal.  The $\Gamma$-action is minimal because it contains the minimal
$G$-action, and Proposition~\ref{prop:standard-HPI} (i) completes the proof.
\end{proof}
The preceding argument requires the $G$-action itself to be strictly HPI.
To pass from this inductive core to arbitrary HPI actions, we replace a
non-equivariant strict cover by a joint orbit-lift construction.
For a general HPI $G$-system, one may choose a highly proximal extension
\[
 \theta:(\widetilde X,G)\longrightarrow(X,G)
\]
with $(\widetilde X,G)$ strictly HPI, but the commuting $H$-action on $X$
need not lift to $\widetilde X$.  We therefore construct a new joint
extension carrying both actions, on which the lifted $G$-action is strictly
HPI.

\begin{theorem}
\label{thm:equivariant-strictification}
Let $G$ and $H$ act commutatively on a compact metrizable space $X$, assume
that $H$ is countable, and suppose that both actions are minimal HPI.  Let
$
 \theta:(\widetilde X,G)\to(X,G)
$
be a metrizable highly proximal extension with $(\widetilde X,G)$ strictly
HPI.  Then there exist a compact metrizable  system $(Z,\Gamma)$ and
factor maps
\[
 c_e:(Z,G)\longrightarrow(\widetilde X,G),
 \qquad
 p:(Z,\Gamma)\longrightarrow(X,\Gamma),
\]
satisfying $\theta\circ c_e=p$, with the following properties:
\begin{enumerate}[label=\textup{(\roman*)}]
\item $G$ and $H$ actions on $Z$ commute;
\item $p$ is highly proximal;
\item $c_e$ is highly proximal;
\item $(Z,G)$ is minimal strictly HPI;
\item $(Z,H)$ is minimal HPI;
\item $(Z,\Gamma)$ is minimal HPI.
\end{enumerate}
Here we do not assume $H$ acts on $\widetilde X$.
\end{theorem}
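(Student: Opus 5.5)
We build $Z$ as an \emph{orbit-lift space} indexed by $H$. For each $h\in H$, let $\widetilde X^h$ denote $\widetilde X$ with the $G$-action unchanged, equipped with the map $\theta_h:=h^{-1}\circ\theta:\widetilde X\to X$. Since $h$ commutes with $G$, each $\theta_h$ is $G$-equivariant and highly proximal; irreducibility is preserved under composition with a homeomorphism, by Proposition~\ref{prop:hp-irreducible}. Consider the countable fibred product
\[
 P:=\Bigl\{(\tilde x_h)_{h\in H}\in\widetilde X^{H}:\ \theta(\tilde x_h)=h\,\theta(\tilde x_e)\text{ for all }h\in H\Bigr\},
\]
which is compact and metrizable because $H$ is countable. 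On $P$, let $G$ act diagonally, and let $H$ act by the shift $(k\cdot\tilde x)_h:=\tilde x_{hk}$. One checks directly that $P$ is invariant under both actions, that the two actions commute (here we use commutativity of $G$ and $H$ on $X$), and that $p((\tilde x_h)):=\theta(\tilde x_e)$ and $c_e((\tilde x_h)):=\tilde x_e$ are equivariant as required, with $\theta\circ c_e=p$. The fibre of $p$ over $x$ is $\prod_h\theta^{-1}(hx)$. Over the dense $G_\delta$ set $\bigcap_h h^{-1}(X_\theta)$ (Lemma~\ref{lem:singleton-fibres}, Baire) this fibre is a singleton. We then let $Z$ be the closure in $P$ of the set of these singleton-fibre points. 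Equivalently, $Z$ is the unique minimal subset for $G$ that maps onto $X$.

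Next we verify (ii), (iii) and minimality. The set of $p$-singleton points is $G$- and $H$-invariant, so $Z$ is $\Gamma$-invariant. Any closed subset of $Z$ mapping onto $X$ must contain all singleton-fibre points, and hence equals $Z$. Thus $p|_Z$ is irreducible, and therefore highly proximal. Minimality of $(Z,G)$ follows: for a closed $G$-invariant $M\subseteq Z$, $p(M)=X$ by minimality of $(X,G)$, hence $M=Z$. The same argument gives minimality of $(Z,H)$ and $(Z,\Gamma)$. For (iii), the equation $\theta\circ c_e=p$ together with the irreducibility of $p$ forces $c_e$ to be irreducible. Indeed, if $F\subsetneq Z$ is closed with $c_e(F)=\widetilde X$, then $p(F)=X$, a contradiction. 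Properties (v) and (vi) then follow from Lemma~\ref{lem:aoto-lift} applied to the almost one-to-one maps $p:(Z,H)\to(X,H)$ and $p:(Z,\Gamma)\to(X,\Gamma)$, together with Proposition~\ref{prop:standard-HPI}(i). Note that $(X,\Gamma)$ is point-distal: this holds because every $H$-distal point that is also $\Gamma$-distal is available via (v). Alternatively, we obtain (vi) at the end from Theorem~\ref{thm:strict-case} applied to $Z$, once (iv) and (v) are in hand.

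The main obstacle is (iv): that $(Z,G)$ is \emph{strictly} HPI, not merely HPI. Our plan is to show that $c_e:Z\to\widetilde X$ sits at the top of a strictly HPI tower. Since $c_e$ is highly proximal and $\widetilde X$ is strictly HPI, appending one highly proximal successor $Z\to\widetilde X$ to the strict tower of $\widetilde X$ yields a strict tower for $Z$. Successor maps are allowed to be arbitrary highly proximal extensions, and $c_e$ is a $G$-extension of minimal systems. So (iv) reduces to (iii), provided the $G$-minimality of $Z$ is established first, as above. The delicate point we expect is ensuring that $Z$ really is minimal and that the singleton-fibre set is nonempty and dense in the right sense. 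This rests on applying Lemma~\ref{lem:singleton-fibres} to each $\theta_h$ and on the countability of $H$ for the Baire intersection. Once that is in place, Theorem~\ref{thm:strict-case} applied to $(Z,G)$ strictly HPI and $(Z,H)$ HPI gives (vi) directly.
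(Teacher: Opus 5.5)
Your proof is correct and is essentially the paper's: the same orbit-lift space, the same diagonal $G$-action and shift $H$-action, the same maps $p$ and $c_e$, the same step of appending $c_e$ to the strict tower of $\widetilde X$, and the same appeal to Theorem~\ref{thm:strict-case} for (vi). Your $Z$, the closure of the singleton-fibre set, coincides with the paper's chosen minimal $\Gamma$-subsystem (indeed with every minimal subset of $P$). Deriving minimality of the subactions directly from irreducibility of $p|_Z$ is a slightly cleaner substitute for the paper's semi-openness argument. You should also state explicitly that $c_e(Z)=\widetilde X$, which follows from $G$-minimality of $\widetilde X$.

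Drop your first justification of (vi), however. Applying Lemma~\ref{lem:aoto-lift} to $p:(Z,\Gamma)\to(X,\Gamma)$ requires $(X,\Gamma)$ to be point-distal. That is precisely what Theorem~\ref{thm:intro-joint} later deduces from this result, so that route is circular. Only the Theorem~\ref{thm:strict-case} route is valid.
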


\begin{proof}
Let $e$ be the identity of $H$ and define the closed orbit-lift space
\begin{equation}\label{eq:orbit-lift-space}
 \cZ:=\left\{z=(z_h)_{h\in H}\in\widetilde X^H:
       \theta(z_h)=h\theta(z_e)\text{ for every }h\in H\right\}.
\end{equation}
Since $H$ is countable, $\widetilde X^H$ is compact metrizable.  The space
$\cZ$ is nonempty, as for a fixed $x\in X$, we may choose
$z_h\in\theta^{-1}(hx)$ for every $h\in H$.

For $g\in G$ and $a\in H$, define
\begin{equation}\label{eq:orbit-lift-actions}
 (g\cdot z)_h:=gz_h,
 \qquad
 (a\star z)_h:=z_{ha}.
\end{equation}
Commutation of the original actions shows that the $G$-action preserves
\eqref{eq:orbit-lift-space}.  The second formula defines a left $H$-action,
since $a\star(b\star z)=(ab)\star z$, and
\[
 \theta((a\star z)_h)=ha\theta(z_e)
 =h\theta((a\star z)_e).
\]
The two lifted actions commute.

Define
\[
 p:\cZ\longrightarrow X,
 \qquad p(z):=\theta(z_e).
\]
Then $p(g\cdot z)=gp(z)$ and $p(a\star z)=ap(z)$.  Choose a minimal
$\Gamma$-subsystem $Z\subseteq\cZ$.  The image $p(Z)$ is a nonempty closed
$G$-invariant subset of $X$, so minimality of $(X,G)$ gives $p(Z)=X$.
We henceforth restrict $p$ to $Z$. From the construction, we have $G$ and $H$ actions on $Z$ commute.

Put 
\[
 X_0:=\{x\in X:|\theta^{-1}(x)|=1\},
\]
and set
\begin{equation}\label{eq:orbit-lift-singleton-set}
 X_\infty:=\bigcap_{h\in H}h^{-1}X_0.
\end{equation}
This is a dense $G_\delta$ subset of $X$, as $\theta$ is 
a highly proximal extension and hence almost one-to-one, while $H$ is countable.  If $x\in X_\infty$ and
$z,z'\in Z$ satisfy $p(z)=p(z')=x$, then
\[
 \theta(z_h)=hx=\theta(z_h')
 \qquad(h\in H).
\]
Since $hx\in X_0$, one has $z_h=z_h'$ for all $h$, and hence $z=z'$.
Thus $p$ is almost one-to-one and therefore highly proximal.

We next prove minimality of the individual subactions. Since $(Z,\Gamma)$ is minimal, the factor map $p$ is
semi-open, and so $p^{-1}(X_\infty)$ is dense in $Z$.  If $K\subseteq Z$ is a
nonempty closed $H$-invariant set, then $p(K)$ is a nonempty closed
$H$-invariant subset of $X$, and hence $p(K)=X$.  Every singleton fibre over
$X_\infty$ therefore belongs to $K$, so $p^{-1}(X_\infty)\subseteq K$.
Density gives $K=Z$.  Thus $(Z,H)$ is minimal.  The same argument using the
minimality of $(X,G)$ proves that $(Z,G)$ is minimal.  Since
$p:(Z,H)\to(X,H)$ is almost one-to-one and the base is point-distal,
Lemma~\ref{lem:aoto-lift} shows that $(Z,H)$ is HPI.

Finally, let
\[
 c_e:Z\longrightarrow\widetilde X,
 \qquad c_e(z):=z_e.
\]
It is a $G$-factor map. Indeed, its image is a nonempty closed $G$-invariant subset of
the minimal $G$-system $\widetilde X$, and hence is all of $\widetilde X$.
Moreover, $\theta \circ c_e=p$.  By semi-openness of $\theta$ and
Lemma~\ref{lem:residual-pullback}, the set $\theta^{-1}(X_\infty)$ is a dense
$G_\delta$ subset of $\widetilde X$.  If $y\in \theta^{-1}(X_\infty)$  and
$c_e(z)=c_e(z')=y$, then
\[
 \theta(z_h)=h\theta(y)=\theta(z_h')
 \qquad(h\in H).
\]
Since $h\theta(y)\in X_0$, one has $z_h=z_h'$ for all $h$.  Thus $c_e$ is
almost one-to-one and hence highly proximal.  Appending $c_e$ as one further
highly proximal stage to a strict HPI tower for $(\widetilde X,G)$ proves that
$(Z,G)$ is strictly HPI.  Theorem~\ref{thm:strict-case} now shows that
$(Z,\Gamma)$ is minimal HPI.
\end{proof}

The orbit-lift construction now completes both main structural statements
from the introduction.

\begin{proof}[Proof of Theorem~\ref{thm:intro-joint}]
Choose a metrizable highly proximal extension
$
 \theta:(\widetilde X,G)\to(X,G)
$
with $(\widetilde X,G)$ strictly HPI.  Theorem
\ref{thm:equivariant-strictification} produces a joint highly proximal
extension $p:Z\to X$ for which $(Z,G)$ is strictly HPI, $(Z,H)$ is 
HPI
and $(Z,\Gamma)$ is HPI.  
Since HPI is inherited
by metrizable factors, $(X,\Gamma)$ is HPI and hence point-distal.  Minimality
is automatic.
\end{proof}

\begin{proof}[Proof of Theorem~\ref{thm:intro-strictification}]
First apply Theorem~\ref{thm:equivariant-strictification} to obtain
$
 p_1:(X_1,\Gamma)\to(X,\Gamma),
$
where $(X_1,G)$ is strictly HPI and $(X_1,H)$ is HPI.  Apply the same theorem
to $X_1$ with the roles of $G$ and $H$ interchanged.  This gives
$
 p_2:(\widehat X,\Gamma)\to(X_1,\Gamma),
$
where $(\widehat X,H)$ is strictly HPI and $p_2$ is highly proximal.  Since
$p_2$ is also a highly proximal $G$-extension and $(X_1,G)$ is strictly HPI,
appending $p_2$ to a strict $G$-tower shows that $(\widehat X,G)$ is strictly
HPI.  The composition $p_1p_2$ is highly proximal, as the composition of
two irreducible surjections is irreducible.  Theorem~\ref{thm:strict-case}
makes the joint action on $\widehat X$ HPI.
\end{proof}

\section{Transitivity consequences}
\label{sec:applications}

\subsection{Subgroup and mixed-product rigidity}
\label{sec:rigidity}
We now derive several consequences of joint point-distality for
transitive subactions and mixed Cartesian products.  
\begin{lemma}
\label{lem:subgroup-rigidity}
Let $(X,\Gamma)$ be a minimal point-distal system and let
$\Lambda\leq\Gamma$.  If the $\Lambda$-action is topologically transitive,
then it is minimal point-distal.
\end{lemma}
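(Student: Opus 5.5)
The plan is to use the dense set of $\Gamma$-distal points together with the fact that distal points are minimal points (Proposition~\ref{prop:standard-HPI}(v)).

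\emph{Step 1: distal points transfer to the subgroup.} Since $(X,\Gamma)$ is minimal point-distal, Proposition~\ref{prop:standard-HPI}(ii) shows that $\Dist_\Gamma(X)$ is a dense $G_\delta$ subset of $X$. A $\Gamma$-distal point is automatically $\Lambda$-distal, because a $\Lambda$-proximal pair is in particular $\Gamma$-proximal. Hence
\[
 \Dist_\Gamma(X)\subseteq\Dist_\Lambda(X).
\]
By Proposition~\ref{prop:standard-HPI}(v), applied to the compact flow $(X,\Lambda)$, every point of $\Dist_\Lambda(X)$ is $\Lambda$-minimal. So $\overline{\Lambda x}$ is a minimal $\Lambda$-set for each $x\in\Dist_\Gamma(X)$.

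\emph{Step 2: a distal point with dense orbit.} Topological transitivity of $(X,\Lambda)$ on a compact metrizable space, with $\Lambda$ countable, gives that the set of points with dense $\Lambda$-orbit is a dense $G_\delta$. To see this, take a countable base $\{U_n\}$. Each $\bigcup_{\lambda\in\Lambda}\lambda^{-1}U_n$ is open and dense by transitivity, and the transitive points are exactly the intersection of these sets.

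Intersecting this set with $\Dist_\Gamma(X)$ and applying the Baire theorem gives a point $x$ that is both $\Gamma$-distal and $\Lambda$-transitive.

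\emph{Step 3: conclude.} For this $x$ we have $X=\overline{\Lambda x}$, and by Step 1 this orbit closure is $\Lambda$-minimal. Hence $(X,\Lambda)$ is minimal. Moreover $x\in\Dist_\Lambda(X)$, so $(X,\Lambda)$ is point-distal.

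\emph{Main obstacle.} The only delicate point is Step 2: transitivity must be upgraded to the existence of a dense-orbit point lying in the residual set of distal points. This is where countability of $\Lambda$ and metrizability of $X$ enter, via Baire category.

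If one prefers to avoid that argument, there is an alternative. Take any $\Gamma$-distal point $x$; then $M=\overline{\Lambda x}$ is $\Lambda$-minimal, and it suffices to show $M=X$.

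Since $\Gamma$ need not be abelian, I would instead argue with $\Lambda$-minimal subsets $\gamma M$, which are well defined when $\Lambda$ is normal. In general, however, I would rely on the Baire argument above, which is cleaner.
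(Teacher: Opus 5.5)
Your proof is correct and is essentially the paper's argument: intersect the dense $G_\delta$ set $\Dist_\Gamma(X)$ with the dense $G_\delta$ set of $\Lambda$-transitive points, note that such a point is $\Lambda$-distal and hence $\Lambda$-minimal, and conclude that its orbit closure is all of $X$. Two minor remarks: the closing aside about normal subgroups is unnecessary, and countability of $\Lambda$ plays no role in showing the transitive points form a $G_\delta$ set (only second countability of $X$ is used).
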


\begin{proof}
The set $\Dist_\Gamma(X)$ is a dense $G_\delta$ set, and the set of
$\Lambda$-transitive points is also a dense $G_\delta$ set.  Choose a point
$x$ in their intersection.  Since $\Lambda\leq\Gamma$, the point $x$ is
$\Lambda$-distal and hence a $\Lambda$-minimal point.  Its $\Lambda$-orbit is
dense, so its minimal orbit closure is all of $X$.
\end{proof}

\begin{theorem}
\label{thm:mixed-product-rigidity}
\label{prop:cartesian-rigidity}
\label{prop:difference-rigidity}
Let $(X,\Gamma)$ be a minimal point-distal system, let
$R_1,\ldots,R_d\in\Gamma$, and put
$
 A:=R_1\times\cdots\times R_d
$
on $X^d$.  Then
\begin{enumerate}[label=\textup{(\roman*)}]
\item
$
 \Dist_\Gamma(X)^d\subseteq\Dist_A(X^d).
$
In particular, the set of $A$-minimal points contains a dense
$G_\delta$ subset of $X^d$.

\item If $A$ is topologically transitive, then $(X^d,A)$ is minimal
point-distal.

\item If the elements $R_1,\ldots,R_d$ commute and $A$ is topologically
transitive, then every difference transformation
$
 R_i^{-1}R_j,
$ $i\neq j,
$
is minimal point-distal.
\end{enumerate}
\end{theorem}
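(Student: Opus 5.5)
For part (i), I will verify distality coordinatewise. Let $x=(x_1,\dots,x_d)$ with every $x_i\in\Dist_\Gamma(X)$, and suppose $x'=(x_1',\dots,x_d')$ is $A$-proximal to $x$. Then there are integers $n_k$ (the acting group of $A$ is $\mathbb Z$) with $d(A^{n_k}x,A^{n_k}x')\to0$ in a product metric. In coordinate $i$ this says $d(R_i^{n_k}x_i,R_i^{n_k}x_i')\to0$. Since $R_i^{n_k}\in\Gamma$, the pair $(x_i,x_i')$ is $\Gamma$-proximal, and $\Gamma$-distality of $x_i$ forces $x_i'=x_i$. Hence $x'=x$, and $x$ is $A$-distal.

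To finish (i), I use Proposition~\ref{prop:standard-HPI}(ii): $\Dist_\Gamma(X)$ is a dense $G_\delta$ in $X$. A finite product of dense $G_\delta$ sets is a dense $G_\delta$ in $X^d$; each $\prod_i U_{n}^{(i)}$ is open and dense, and there are countably many of them. Finally, Proposition~\ref{prop:standard-HPI}(v) says distal points are minimal points. This gives the statement about $A$-minimal points.

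For part (ii), I will copy the proof of Lemma~\ref{lem:subgroup-rigidity}. If $A$ is topologically transitive on the compact metric space $X^d$, its transitive points form a dense $G_\delta$. Intersecting with the dense $G_\delta$ from (i) by Baire, I get a point $x$ that is both $A$-transitive and $A$-distal. By Proposition~\ref{prop:standard-HPI}(v), $x$ is an $A$-minimal point, so its orbit closure, which is all of $X^d$, is minimal. Thus $(X^d,A)$ is minimal, and it is point-distal because $x$ is a distal point.

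For part (iii), fix $i\neq j$ and consider the projection $\pi_{ij}:X^d\to X^2$ onto coordinates $i$ and $j$. It intertwines $A$ with $R_i\times R_j$, so $R_i\times R_j$ is transitive, and in fact minimal by (ii) applied to it or by the factor property. Next I pass to the map $\phi:X^2\to X$, $\phi(u,v)=R_i^{-1}\ldots$. A cleaner route is to let $\Lambda=\langle R_i^{-1}R_j\rangle\le\Gamma$ and show that $\Lambda$ is topologically transitive on $X$; then Lemma~\ref{lem:subgroup-rigidity} gives minimal point-distality. Transitivity is the main step. Take nonempty open sets $U,V\subseteq X$. By minimality of $R_i\times R_j$ on $X^2$, the orbit of any point $(x,x)$ is dense, so some $n$ gives $R_i^nx\in U$ and $R_j^nx\in V$. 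Put $u=R_i^nx\in U$. Commutativity gives $R_j^nx=(R_jR_i^{-1})^n u=(R_i^{-1}R_j)^n u\in V$. Hence $(R_i^{-1}R_j)^nU\cap V\neq\varnothing$ for some $n$, which is transitivity of $\Lambda$. The only delicate point is using commutativity to rewrite $R_j^nR_i^{-n}$ as $(R_i^{-1}R_j)^n$. The rest is routine, and I expect no real obstacle beyond the Baire intersections.
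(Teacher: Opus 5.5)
Your proposal is correct and follows the paper's proof essentially step for step: coordinatewise $\Gamma$-distality for (i), a Baire intersection of $A$-transitive and $A$-distal points for (ii), and for (iii) minimality of $R_i\times R_j$ on $X^2$ combined with the commutativity identity $(R_i^{-1}R_j)^nR_i^nx=R_j^nx$ and Lemma~\ref{lem:subgroup-rigidity}. The abandoned fragment about a map $\phi:X^2\to X$ plays no role and should simply be deleted.
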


\begin{proof}
(i) is direct from the fact that $R_1,\ldots,R_d\in \Gamma$.

We now prove (ii). If $A$ is topologically transitive, the set of $A$-transitive points is a
dense $G_\delta$ subset of $X^d$.  Choose a point in its intersection with
$ \Dist_\Gamma(X)^d$.  This point is simultaneously $A$-transitive and $A$-distal. Hence
it is minimal, and its orbit closure is all of $X^d$.  This proves part~(ii).

Assume now that the $R_i$ commute.  By part~(ii), the projection of
$(X^d,A)$ to the $(i,j)$ coordinates makes $(X^2,R_i\times R_j)$ minimal.
Let $U,V\subseteq X$ be nonempty open sets and fix $x_0\in X$.  Choose
$n\in\mathbb Z$ such that
$
 R_i^nx_0\in U$
 and $ R_j^nx_0\in V.
$
For $u=R_i^nx_0$, commutativity gives
\[
 (R_i^{-1}R_j)^nu=R_j^nx_0\in V.
\]
Thus $R_i^{-1}R_j$ is transitive.  It generates a subgroup of $\Gamma$, so
Lemma~\ref{lem:subgroup-rigidity} makes it minimal point-distal.
\end{proof}

\begin{corollary}
\label{cor:two-homeomorphisms-rigidity}
Let $T$ and $S$ be commuting minimal point-distal homeomorphisms of $X$.
\begin{enumerate}[label=\textup{(\roman*)}]
\item For every $(a,b)\in\mathbb Z^2$, if $T^aS^b$ is transitive, then it is
minimal point-distal.
\item If $T\times S$ is transitive on $X^2$, then both $T\times S$ and
$T^{-1}S$ are minimal point-distal.
\end{enumerate}
\end{corollary}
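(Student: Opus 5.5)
The plan is to reduce both parts to the joint result and then apply Lemma~\ref{lem:subgroup-rigidity} and Theorem~\ref{thm:mixed-product-rigidity}. Let $G=\langle T\rangle$ and $H=\langle S\rangle$, both copies of $\mathbb Z$ (or finite cyclic quotients), so both are countable. They act commutatively on $X$, and both $(X,G)$ and $(X,H)$ are minimal point-distal. Theorem~\ref{thm:intro-joint} then shows that $(X,\Gamma)$ is minimal point-distal, where $\Gamma=\langle T,S\rangle$. Everything afterwards is an application of the rigidity results for this single minimal point-distal $\Gamma$-system.

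For (i), let $\Lambda:=\langle T^aS^b\rangle\leq\Gamma$. The transitivity hypothesis says exactly that the $\Lambda$-action is topologically transitive. Lemma~\ref{lem:subgroup-rigidity} then gives that $\Lambda$, that is $T^aS^b$, is minimal point-distal.

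For (ii), I would apply Theorem~\ref{thm:mixed-product-rigidity} with $d=2$, $R_1=T$ and $R_2=S$, so that $A=T\times S$ on $X^2$. Part (ii) of that theorem gives that $(X^2,T\times S)$ is minimal point-distal. Since $T$ and $S$ commute, part (iii) applies with $(i,j)=(1,2)$ and gives that $R_1^{-1}R_2=T^{-1}S$ is minimal point-distal.

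No step here looks like a real obstacle, because the substantive input is Theorem~\ref{thm:intro-joint}. The only point I would check is that transitivity of the cyclic group generated by $T^aS^b$ is what Lemma~\ref{lem:subgroup-rigidity} needs. The lemma is stated for arbitrary subgroups $\Lambda\le\Gamma$ with transitive action, and "$T^aS^b$ transitive" is exactly this (or, if it is read as forward transitivity, it is the stronger condition). So the hypotheses match.
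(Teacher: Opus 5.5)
Your proposal is correct and follows the paper's proof: you apply Theorem~\ref{thm:intro-joint} to $G=\langle T\rangle$ and $H=\langle S\rangle$ to get that the $\langle T,S\rangle$-action is minimal point-distal, then obtain (i) from Lemma~\ref{lem:subgroup-rigidity} and (ii) from parts (ii) and (iii) of Theorem~\ref{thm:mixed-product-rigidity} with $R_1=T$, $R_2=S$. Your checks that the cyclic groups are countable and that transitivity of $T^aS^b$ is exactly the lemma's hypothesis are the only bookkeeping needed.
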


\begin{proof}
By Theorem~\ref{thm:intro-joint}, the $\langle T,S\rangle$-action is minimal
point-distal.  Part~(i) is Lemma~\ref{lem:subgroup-rigidity}, and part~(ii)
follows from Theorem~\ref{thm:mixed-product-rigidity}.
\end{proof}

\subsection{Linear iterates}
\label{sec:linear}
We now combine the mixed-product rigidity theorem with the
linear-iterate criterion of Donoso--Koutsogiannis--Sun \cite{DKS}.  In a general
minimal $\mathbb Z^k$-system, joint transitivity requires both
transitivity of the product transformation and transitivity of all
pairwise differences.  In the point-distal category, the preceding
results show that the difference conditions are automatic once the
product is transitive.

Put
$\Gamma=\langle S_1,\ldots,S_k\rangle$ and let $(X,\Gamma)$ be a minimal $\mathbb Z^k$-system.  For
$T_1,\ldots,T_d\in\Gamma$, set
\[
 A:=T_1\times\cdots\times T_d
 \quad\text{on }X^d,
 \qquad
 \Delta_d(x):=(x,\ldots,x).
\]

\begin{definition}
The linear families $(T_1^n)_{n\in\mathbb Z},\ldots,(T_d^n)_{n\in\mathbb Z}$ are \emph{jointly
transitive} if there is a dense $G_\delta$ set $X_0\subseteq X$ such that
$
 \{A^n\Delta_d(x):n\in\mathbb Z\}
$
is dense in $X^d$ for every $x\in X_0$.  They are \emph{jointly minimal} if
this orbit is dense for every $x\in X$.
\end{definition}

Donoso, Koutsogiannis and Sun proved that in an arbitrary minimal
$\mathbb Z^k$-system the families $(T_1^n)_n,\ldots,(T_d^n)_n$ are jointly
transitive if and only if every $T_i^{-1}T_j$, $i\neq j$, is transitive and
$T_1\times\cdots\times T_d$ is transitive
\cite[Theorem~1.6]{DKS}.  In the point-distal category, the difference
conditions are automatic and all transitivity conclusions upgrade to
minimality.

\begin{theorem}
\label{thm:linear-rigidity}
Let $(X,\Gamma)$ be a minimal point-distal $\mathbb Z^k$-system, and
let $T_1,\ldots,T_d\in\Gamma$.  The following are equivalent:
\begin{enumerate}[label=\textup{(\roman*)}]
\item $T_1\times\cdots\times T_d$ is topologically transitive on $X^d$;
\item $(X^d,T_1\times\cdots\times T_d)$ is minimal point-distal;
\item $(T_1^n)_n,\ldots,(T_d^n)_n$ are jointly transitive;
\item $(T_1^n)_n,\ldots,(T_d^n)_n$ are jointly minimal.
\end{enumerate}
Whenever these conditions hold, every difference
$
 T_i^{-1}T_j,
$ $i\neq j,
$
is minimal point-distal.  
\end{theorem}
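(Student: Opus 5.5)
The plan is to prove the chain (i)$\Rightarrow$(ii)$\Rightarrow$(iv)$\Rightarrow$(iii)$\Rightarrow$(i), and then read off the statement about differences from Theorem~\ref{thm:mixed-product-rigidity}(iii). The elements $T_1,\ldots,T_d$ lie in the abelian group $\Gamma$, so they commute.

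For (i)$\Rightarrow$(ii), apply Theorem~\ref{thm:mixed-product-rigidity}(ii) with $R_i=T_i$; this directly gives that $(X^d,A)$ is minimal point-distal. The same theorem, part (iii), shows that every $T_i^{-1}T_j$ with $i\neq j$ is minimal point-distal. This proves the final sentence of the statement as soon as (i) holds. Since each of the conditions will be shown to imply (i), the final sentence holds whenever any of them does.

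For (ii)$\Rightarrow$(iv), minimality of $(X^d,A)$ means every $A$-orbit in $X^d$ is dense. In particular the orbit of each diagonal point $\Delta_d(x)$ is dense, so the families are jointly minimal. The implication (iv)$\Rightarrow$(iii) is trivial: take $X_0=X$.

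For (iii)$\Rightarrow$(i), joint transitivity provides at least one point $\Delta_d(x)$ whose $A$-orbit is dense in $X^d$. A point with dense orbit makes $A$ topologically transitive: given nonempty open $U,V\subseteq X^d$, choose $m,n$ with $A^m\Delta_d(x)\in U$ and $A^n\Delta_d(x)\in V$. Then $A^{n-m}U\cap V\neq\varnothing$. Alternatively, one can invoke the Donoso--Koutsogiannis--Sun criterion \cite[Theorem~1.6]{DKS}, which lists transitivity of $T_1\times\cdots\times T_d$ among its necessary conditions.

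I expect no genuine obstacle, since the substantive work sits in Theorem~\ref{thm:mixed-product-rigidity}. The only point needing care is the direction (i)$\Rightarrow$(iii). If one wants it directly rather than through (ii) and (iv), one combines (i) with the transitivity of the differences from Theorem~\ref{thm:mixed-product-rigidity}(iii) and applies \cite[Theorem~1.6]{DKS}. Routing through (ii) avoids this, because minimality of $A$ handles every diagonal point at once.
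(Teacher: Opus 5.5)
Your proposal is correct and matches the paper's proof: the same cycle (i)$\Rightarrow$(ii)$\Rightarrow$(iv)$\Rightarrow$(iii)$\Rightarrow$(i), with (i)$\Rightarrow$(ii) and the statement about differences taken from Theorem~\ref{thm:mixed-product-rigidity}(ii) and (iii), and the other implications being the same elementary observations. You additionally spell out two details the paper leaves implicit: why a dense orbit gives topological transitivity, and why the $T_i$ commute (because $\Gamma$ is abelian).
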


\begin{proof}
Theorem~\ref{thm:mixed-product-rigidity} gives (i)$\Rightarrow$(ii).  Under
(ii), every point of $X^d$ has dense orbit, and hence every diagonal point
$\Delta_d(x)$ has dense orbit; thus (ii)$\Rightarrow$(iv).  The implication
(iv)$\Rightarrow$(iii) is immediate.  Joint transitivity provides a
transitive diagonal point, so (iii)$\Rightarrow$(i).  The final assertion
follows from part~(iii) of Theorem~\ref{thm:mixed-product-rigidity}.
\end{proof}

\begin{remark}[Affine linear iterates]
\label{rem:affine-linear}
Let $a_i\in\mathbb Z\setminus\{0\}$ and $b_i\in\mathbb Z$.  Applying
Theorem~\ref{thm:linear-rigidity} to $T_i^{a_i}$ gives the corresponding
criterion for
\[
 (T_1^{a_1n+b_1})_n,\ldots,(T_d^{a_dn+b_d})_n.
\]
Indeed, the fixed coordinatewise homeomorphism
$T_1^{b_1}\times\cdots\times T_d^{b_d}$ preserves density of the relevant
orbit sets.
\end{remark}

\section{Canonical factors and joint towers}
\label{sec:canonical}

The preceding sections establish the existence of a joint point-distal
structure and its orbit-theoretic consequences.  We now turn to the finer
question of canonical compatibility.  The issue is local: over a fixed
common factor, one must compare the maximal highly proximal and maximal
isometric intermediate factors for the two subactions and for the action
they generate.  Once this local comparison is understood, it can be iterated
along canonical HPI or Furstenberg towers.

\subsection{Comparison over a common factor}
\label{subsec:comparison-common-factor}

We begin by fixing the lattice convention for intermediate factors.

Let
$
 q:X\to Y
$
be a factor map. An \emph{intermediate factor} of $q$ is a factorization
\[
 X\xrightarrow{p}Z\xrightarrow{\pi}Y,
 \qquad
 q=\pi p.
\]

If
\[
 X\xrightarrow{p_i}Z_i\xrightarrow{\pi_i}Y,
 \qquad i=1,2,
\]
are intermediate factors, we write
$
 Z_1\preceq_Y Z_2
$
if $Z_1$ is a factor of $Z_2$ over $Y$. Equivalently,
$
 R_{p_2}\subseteq R_{p_1}.
$

The meet
$
 Z_1\wedge_Y Z_2
$
is the greatest intermediate factor which is a factor of both $Z_1$ and
$Z_2$. Its kernel relation is
\[
 R_{Z_1\wedge_Y Z_2}
 =
 R_{p_1}\vee R_{p_2},
\]
where the right-hand side denotes the smallest closed equivalence relation
containing $R_{p_1}\cup R_{p_2}$.

To compare isometric successors, we first note that relative
equicontinuity for the two commuting subactions combines to relative
equicontinuity for the generated action.

\begin{lemma}
\label{lem:separate-joint-equicontinuity}
Let
$
 \pi:(Z,\Gamma)\to(Y,\Gamma)
$
be a factor map. If $\pi$ is relatively equicontinuous for both
$G$ and $H$, then it is relatively equicontinuous for
$\Gamma=\langle G,H\rangle$. Consequently, if $\pi$ is both
$G$-isometric and $H$-isometric, then it is $\Gamma$-isometric.
\end{lemma}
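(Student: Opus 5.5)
The plan is to reduce the joint statement to a two-step composition, using that every element of $\Gamma=\langle G,H\rangle$ factors as $\gamma=hg$ with $g\in G$, $h\in H$, because the two actions commute. Fix an entourage $U$ of the diagonal in $Z^2$. First apply relative equicontinuity for $H$ to $U$. This gives an entourage $V_1$ such that $(z,z')\in V_1\cap R_\pi$ implies $(hz,hz')\in U$ for all $h\in H$. Next apply relative equicontinuity for $G$ to the entourage $V_1$. This gives an entourage $V$ such that $(z,z')\in V\cap R_\pi$ implies $(gz,gz')\in V_1$ for all $g\in G$.

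The key point to check is that the intermediate pair stays in the relation $R_\pi$, so that the second step can be applied. This holds because $\pi$ is $\Gamma$-equivariant: $\pi(gz)=g\pi(z)=g\pi(z')=\pi(gz')$. Hence $(gz,gz')\in V_1\cap R_\pi$, and so $(hgz,hgz')\in U$. Since every $\gamma\in\Gamma$ has the form $hg$, this shows that for $(z,z')\in V\cap R_\pi$ we have $(\gamma z,\gamma z')\in U$ for every $\gamma\in\Gamma$. This is exactly relative equicontinuity of $\pi$ for $\Gamma$.

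For the isometric consequence, I would invoke the equivalence between relative equicontinuity and isometricity for extensions of compact metrizable minimal systems (\cite[Chapter~9]{Auslander}). The only subtlety, and the step I expect to need care, is that this equivalence requires minimality. In the situation where the lemma is used, $Z$ is a common factor of a system on which $G$ already acts minimally, so $(Z,\Gamma)$ is minimal. The implication then runs as follows: $G$- and $H$-isometric gives $G$- and $H$-relatively equicontinuous, which gives $\Gamma$-relatively equicontinuous, which gives $\Gamma$-isometric.

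As a backup avoiding the minimality issue, I would construct an invariant metric directly. Let $\rho_G$ be a $G$-invariant relative metric, and set
\[
 \rho(z,z'):=\sup_{h\in H}\rho_G(hz,hz') \qquad ((z,z')\in R_\pi).
\]
By commutativity this is $G$-invariant, and it is clearly $H$-invariant, so it is $\Gamma$-invariant. It dominates $\rho_G$, so it is a metric on every fibre. Its continuity on $R_\pi$ would follow from $H$-equicontinuity combined with Lemma~\ref{lem:fibre-metric}, which makes the family $\{\rho_G\circ(h\times h)\}_{h\in H}$ equicontinuous on the compact set $R_\pi$. Continuity of this supremum is where the real work would lie if this route were needed.
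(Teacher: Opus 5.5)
Your main argument is the paper's proof: the same two-step composition of the moduli of relative equicontinuity for $G$ and $H$ via $\gamma=gh=hg$ (you only apply the two groups in the opposite order), with the isometric clause resting, in the paper as well, on the cited equivalence of relative equicontinuity and isometricity for minimal metrizable extensions. Your check that $(gz,gz')$ stays in $R_\pi$, and your remark that $(Z,\Gamma)$ is minimal where the lemma is applied, make explicit what the paper leaves implicit. The backup sup-metric route is not needed, and its stated justification is incomplete: relative $H$-equicontinuity together with Lemma~\ref{lem:fibre-metric} only makes $\{\rho_G\circ(h\times h)\}_{h\in H}$ equicontinuous within each fibre, not across nearby fibres, and that cross-fibre control is exactly the nontrivial content of the equicontinuity--isometry equivalence.
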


\begin{proof}
Fix a compatible metric $d$ on $Z$ and let $\varepsilon>0$.
By relative equicontinuity for $G$, choose $\eta>0$ such that
\[
 \pi(u)=\pi(v),\quad d(u,v)<\eta
 \quad\Longrightarrow\quad
 d(gu,gv)<\varepsilon
 \qquad(g\in G).
\]
By relative equicontinuity for $H$, there exists $\delta>0$ such that
\[
 \pi(u)=\pi(v),\quad d(u,v)<\delta
 \quad\Longrightarrow\quad
 d(hu,hv)<\eta
 \qquad(h\in H).
\]
Since the actions commute, every element of
$\Gamma=\langle G,H\rangle$ has the form $gh$, with
$g\in G$ and $h\in H$. Hence
\[
 \pi(u)=\pi(v),\quad d(u,v)<\delta
 \quad\Longrightarrow\quad
 d(ghu,ghv)<\varepsilon
\]
for every $g\in G$ and $h\in H$. Thus $\pi$ is relatively
equicontinuous for $\Gamma$.
\end{proof}

The identification of the common maximal equicontinuous factor uses one
additional elementary fact about compact homogeneous spaces.

\begin{lemma}
\label{lem:compact-centralizer}
Let a compact group $K$ act continuously and transitively on a compact
Hausdorff space $Y$. Then the centralizer
\[
 C_{\Homeo(Y)}(K)
 :=
 \{\varphi\in\Homeo(Y):\varphi k=k\varphi
   \text{ for every }k\in K\}
\]
is a compact group. More precisely, if $y_0\in Y$ and
\[
 L:=K_{y_0}
   =\{k\in K:ky_0=y_0\},
\]
then
\[
 C_{\Homeo(Y)}(K)\cong N_K(L)/L,
\]
where
$
 N_K(L):=\{a\in K:aLa^{-1}=L\}
$
is the normalizer of $L$ in $K$. In particular, every subgroup of
$C_{\Homeo(Y)}(K)$ acts equicontinuously on $Y$.
\end{lemma}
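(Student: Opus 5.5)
We identify $Y$ with the coset space $K/L$ through the orbit map $k\mapsto ky_0$. Since $K$ is compact and $Y$ is Hausdorff, this continuous bijection $K/L\to Y$ is a homeomorphism. After this identification, $K$ acts by left translation. We then define a map
\[
 \Psi:N_K(L)\longrightarrow C_{\Homeo(Y)}(K),\qquad \Psi(a)(kL):=kaL .
\]
This is well defined because $klaL=kaa^{-1}laL=kaL$ when $a$ normalizes $L$. It commutes with left translations by construction. It is a homeomorphism of $Y$ with inverse $\Psi(a^{-1})$, and continuity comes from continuity of multiplication in $K$ together with the quotient topology. We also have $\Psi(ab)=\Psi(a)\Psi(b)$, which matches the convention $(\Psi(a)\Psi(b))(kL)=kabL$. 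The kernel is $\{a:kaL=kL \text{ for all } k\}$, and taking $k=e$ shows this kernel is $L$.

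For surjectivity, take $\varphi$ in the centralizer and write $\varphi(y_0)=ay_0$ for some $a\in K$, using transitivity. For every $k\in K$,
\[
 \varphi(ky_0)=k\varphi(y_0)=kay_0 ,
\]
so $\varphi$ is right multiplication by $a$ on cosets. This requires well-definedness: if $l\in L$, then
\[
 ay_0=\varphi(y_0)=\varphi(ly_0)=lay_0 ,
\]
hence $a^{-1}la\in L$. This gives $a^{-1}La\subseteq L$. To get the reverse inclusion, apply the same argument to $\varphi^{-1}$, which yields some $b$ with $b^{-1}Lb\subseteq L$ and $\varphi^{-1}(y_0)=by_0$. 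Then $aby_0=y_0$, so $ab\in L$, and $aLa^{-1}=b^{-1}(ab)L(ab)^{-1}b\subseteq b^{-1}Lb\subseteq L$. Thus $a\in N_K(L)$ and $\varphi=\Psi(a)$.

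The step I expect to be the main obstacle is this equality $aLa^{-1}=L$ rather than mere inclusion. In a compact group, $a^{-1}La\subseteq L$ with $L$ closed does not automatically force equality by any cheap argument, so I will rely on the $\varphi^{-1}$ trick above, and I will check that computation carefully.

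Since $N_K(L)$ is a closed subgroup of $K$, it is compact, so $N_K(L)/L$ is a compact group. Transport its topology through $\Psi$. The joint action map $N_K(L)/L\times Y\to Y$, $(aL,kL)\mapsto kaL$, is continuous. A continuous action of a compact group on a compact space is equicontinuous with respect to the unique uniformity, by a standard compactness and tube-lemma argument. Hence every subgroup of the centralizer acts equicontinuously on $Y$.
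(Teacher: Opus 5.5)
Your argument is the same as the paper's: identify $Y$ with $K/L$ through the orbit map, show that every centralizing homeomorphism is a right translation of cosets by an element of $N_K(L)$, and read off compactness and equicontinuity from $N_K(L)/L$. It is correct in substance, but two computations need repair.

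First, the group law in $\Homeo(Y)$ is composition; that is how $\varphi k=k\varphi$ is read in the statement. With your formula,
\[
 \Psi(a)\bigl(\Psi(b)(kL)\bigr)=\Psi(a)(kbL)=kbaL ,
\]
so $\Psi(a)\Psi(b)=\Psi(ba)$. Thus $\Psi$ is an anti-homomorphism, and your ``convention'' $(\Psi(a)\Psi(b))(kL)=kabL$ is not the multiplication of the centralizer. This is exactly why the paper uses $R_a(kL)=ka^{-1}L$. Replacing $\Psi(a)$ by $\Psi(a^{-1})$ fixes it, and your kernel and surjectivity arguments are unaffected.

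Second, in $aLa^{-1}=b^{-1}(ab)L(ab)^{-1}b$ the equality is not an identity of elements, since $b^{-1}(ab)\neq a$ in general. It is true as an equality of sets, but only because both sides equal $b^{-1}Lb$, and that needs $ba\in L$, which you never state. It does hold, since $y_0=\varphi(by_0)=b\varphi(y_0)=bay_0$. The clean chain is then
\[
 aLa^{-1}=b^{-1}(ba)L(ba)^{-1}b=b^{-1}Lb\subseteq L .
\]
With this correction, your treatment of the reverse inclusion is more explicit than the paper's one-line ``applying the same argument to $\varphi^{-1}$ gives the reverse inclusion'', which by itself only yields $b^{-1}Lb\subseteq L$.

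Your aside about the normalizer is also too pessimistic. The set $\{c\in K: cLc^{-1}\subseteq L\}$ is closed and contains $a^{-n}$ for all $n\geq 1$. The closure of $\{a^{-n}:n\geq 1\}$ in the compact group $K$ is a subgroup, hence contains $a$. So $a^{-1}La\subseteq L$ already forces equality.

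For the last step, you get equicontinuity directly from joint continuity of $(aL,kL)\mapsto kaL$ on the compact space $N_K(L)/L\times K/L$. The paper instead asserts compactness in the compact-open topology and uses that compact subsets of $\Homeo(Y)$ act equicontinuously. Your joint-continuity observation is precisely what the paper leaves implicit.

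You should add one sentence: joint continuity makes $aL\mapsto\Psi(a^{-1})$ a continuous bijection from a compact space onto the centralizer with its Hausdorff compact-open topology, hence a homeomorphism. Without this, ``transport the topology'' makes the compactness claim vacuous.
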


\begin{proof}
Since the action is continuous and $Y$ is Hausdorff, $L$ is a closed
subgroup of $K$. Consider the orbit map
\[
 \theta:K\longrightarrow Y,
 \qquad
 \theta(k):=ky_0.
\]
The map $\theta$ is continuous and surjective, as the $K$-action
is transitive.

For $k_1,k_2\in K$, one has
\[
 \theta(k_1)=\theta(k_2)
\qquad\text{if and only if}\qquad
 k_1y_0=k_2y_0,
\]
which is equivalent to
$
 k_2^{-1}k_1\in L,
$
and hence to
$
 k_1L=k_2L.
$
Thus the fibres of $\theta$ are precisely the left cosets of $L$.
Therefore $\theta$ induces a continuous bijection
\[
 \overline\theta:K/L\longrightarrow Y,
 \qquad
 \overline\theta(kL)=ky_0.
\]
Since $L$ is closed and $K$ is compact, the quotient $K/L$ is compact.
As $Y$ is Hausdorff, $\overline\theta$ is a homeomorphism. It is also
$K$-equivariant with respect to the left translation action on $K/L$,
since
\[
 \overline\theta(akL)
 =
 aky_0
 =
 a\,\overline\theta(kL).
\]
Hence we may identify $Y$ with the homogeneous space $K/L$.

We next determine the homeomorphisms of $K/L$ which commute with the
left $K$-action. Let
$
 \varphi\in C_{\Homeo(K/L)}(K).
$
Since $\varphi$ is $K$-equivariant, it is completely determined by the
point $\varphi(L)$. Write
$
 \varphi(L)=aL
$
for some $a\in K$. Therefore
\[
 aL
 =
 \varphi(L)
 =
 \varphi(\ell L)
 =
 \ell\varphi(L)
 =
 \ell aL,
\]
so
\[
 a^{-1}La\subseteq L.
\]
Applying the same argument to $\varphi^{-1}$ gives the reverse inclusion,
and therefore
\[
 a^{-1}La=L.
\]
Thus $a\in N_K(L)$.

Conversely, if $a\in N_K(L)$, define
\[
R_a:K/L\longrightarrow K/L,
\qquad
R_a(kL):=ka^{-1}L.
\]
This is well defined. Indeed, if $k_1L=k_2L$, then
$k_2^{-1}k_1\in L$, and since $a\in N_K(L)$,
$
a(k_2^{-1}k_1)a^{-1}\in L.
$
Equivalently,
$
k_1a^{-1}L=k_2a^{-1}L.
$

The map $R_a$ is a homeomorphism, with inverse $R_{a^{-1}}$, and it
commutes with the left $K$-action. Moreover,
$
R_a\circ R_b=R_{ab}.
$
Thus
\[
N_K(L)\longrightarrow C_{\Homeo(K/L)}(K),
\qquad
a\longmapsto R_a,
\]
is a group homomorphism. Its kernel is $L$, and every element of
$C_{\Homeo(K/L)}(K)$ is of this form. Hence
\[
N_K(L)/L\cong C_{\Homeo(K/L)}(K).
\]
Transporting this identification through
$\overline\theta:K/L\to Y$ gives
\[
 C_{\Homeo(Y)}(K)\cong N_K(L)/L.
\]

Since $N_K(L)$ is a closed subgroup of the compact group $K$,
the quotient $N_K(L)/L$ is compact. Hence the centralizer
$C_{\Homeo(Y)}(K)$ is compact in the compact-open topology.

Finally, a compact subgroup of $\Homeo(Y)$ acts equicontinuously on the
compact Hausdorff space $Y$. Therefore every subgroup of
$C_{\Homeo(Y)}(K)$ acts equicontinuously on $Y$.
\end{proof}

We can now compare the three canonical operations in a single statement.

\begin{theorem}
\label{thm:canonical-factor-comparison}
The following hold.
\begin{enumerate}[label=\textup{(\roman*)}]
\item The three actions have the same maximal equicontinuous factor as a
quotient of $X$
\[
 \MEF(X,G)=\MEF(X,H)=\MEF(X,\Gamma).
\]

\item For every common factor map
$
 q:(X,\Gamma)\to(Y,\Gamma),
$
the canonical maximal highly proximal intermediate factors coincide:
\[
 \mathsf H_G(q)=\mathsf H_H(q)=\mathsf H_\Gamma(q).
\]

\item For every such common factor,
\[
 \mathsf I_\Gamma(q)
 =\mathsf I_G(q)\wedge_Y\mathsf I_H(q).
\]
Equivalently,
\[
 E_\Gamma(q)=E_G(q)\vee E_H(q).
\]
\end{enumerate}
\end{theorem}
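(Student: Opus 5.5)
I will prove the three parts in the order (iii), (i), (ii), since (iii) supplies the lattice-theoretic mechanism that (i) reuses.

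\emph{Part (iii).} By Proposition~\ref{prop:characteristic-successors}, $\mathsf I_G(q)$ and $\mathsf I_H(q)$ are common $\Gamma$-factors, so $E_G(q)$ and $E_H(q)$ are $\Gamma$-invariant closed equivalence relations. Hence $E:=E_G(q)\vee E_H(q)$ is $\Gamma$-invariant: it is the closure of the union of finite relational composites of two $\Gamma$-invariant relations. The quotient $M:=X/E$ is therefore a $\Gamma$-factor over $Y$, and $M=\mathsf I_G(q)\wedge_Y\mathsf I_H(q)$.

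\begin{enumerate}[label=\textup{(\alph*)}]
\item $M$ is $\Gamma$-isometric over $Y$. A factor of a relatively $G$-equicontinuous extension over $Y$ is again relatively $G$-equicontinuous over $Y$. I will check this via the metric characterization: $X/E_G(q)\to M\to Y$, and relative equicontinuity passes to factors over $Y$ for minimal metrizable flows, using the regionally proximal description that $E_L$ of the factor map is trivial. Hence $M\to Y$ is both $G$- and $H$-isometric, and Lemma~\ref{lem:separate-joint-equicontinuity} makes it $\Gamma$-isometric. So $M\preceq_Y\mathsf I_\Gamma(q)$.
\item Conversely, $\mathsf I_\Gamma(q)\to Y$ is $\Gamma$-isometric, hence $G$-isometric, since a $\Gamma$-invariant relative metric is $G$-invariant. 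Thus $\mathsf I_\Gamma(q)\preceq_Y\mathsf I_G(q)$, and likewise for $H$, so $\mathsf I_\Gamma(q)\preceq_Y M$ by the definition of the meet.
\end{enumerate}

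The equality of relations follows from Proposition~\ref{prop:maximal-isometric}. The main obstacle is the permanence claim that factors over $Y$ of isometric extensions are isometric. I would first try to cite it from \cite{Auslander}; failing that, I would argue that $Q_G(\pi_M)$ is the image of $Q_G$ of the isometric extension, hence contained in the diagonal.

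\emph{Part (i).} Take $Y=\{*\}$. Part (iii) gives $\MEF(X,\Gamma)=\MEF(X,G)\wedge\MEF(X,H)$, so $\MEF(X,\Gamma)\preceq\MEF(X,G)$. For the reverse, let $Y_G=\MEF(X,G)$; it is a $\Gamma$-factor by Proposition~\ref{prop:characteristic-successors}. Write $Y_G$ as a minimal equicontinuous $G$-flow, hence a homogeneous space of the compact group $K=\overline{G}\subseteq\Homeo(Y_G)$, acting transitively by minimality. Each $h\in H$ commutes with $G$ and therefore with $K$. Lemma~\ref{lem:compact-centralizer} then shows that $H$ acts equicontinuously on $Y_G$, so $\Gamma$ acts equicontinuously. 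Hence $Y_G\preceq\MEF(X,\Gamma)$, and by symmetry all three coincide.

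\emph{Part (ii).} High proximality of $\pi:Z\to Y$ is irreducibility (Proposition~\ref{prop:hp-irreducible}), a property independent of the group as long as the flows are minimal. The $\Gamma$-, $G$- and $H$-actions on $X$ are all minimal, so their factors are minimal too. Hence among the common $\Gamma$-intermediate factors, the $G$-, $H$- and $\Gamma$-highly proximal ones coincide. By Proposition~\ref{prop:characteristic-successors}, $\mathsf H_G(q)$ and $\mathsf H_H(q)$ are common $\Gamma$-factors. Each is therefore a $\Gamma$-highly proximal intermediate factor, giving $\mathsf H_G(q),\mathsf H_H(q)\preceq_Y\mathsf H_\Gamma(q)$. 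Conversely, $\mathsf H_\Gamma(q)$ is $G$-highly proximal, so $\mathsf H_\Gamma(q)\preceq_Y\mathsf H_G(q)$ by maximality, and likewise for $H$. This yields the equalities, using uniqueness of the canonical maximal member as in the proof of Proposition~\ref{prop:characteristic-successors}.
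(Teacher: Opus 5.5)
Your proposal is correct and follows essentially the same route as the paper. For (i) you use the compact-centralizer lemma for the nontrivial direction. For (ii) you use group-independence of irreducibility together with Proposition~\ref{prop:characteristic-successors} and maximality. For (iii) you compare both ways with the meet, using Lemma~\ref{lem:separate-joint-equicontinuity}. The differences are cosmetic: you prove (iii) first and route (ii) through $\mathsf H_\Gamma(q)$. You also state explicitly the permanence fact that a factor over $Y$ of a $G$-isometric extension is $G$-isometric, which the paper uses without comment.
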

\begin{proof}
We first prove part~(i).  Let
\[
 \pi_G:X\longrightarrow Y_G:=\MEF(X,G).
\]
Since $Y_G$ is the maximal $G$-isometric factor of the map $X\to\{*\}$,
Proposition~\ref{prop:characteristic-successors} implies that the $H$-action
descends to $Y_G$.  Let
\[
 K:=\overline{G}^{\,\Homeo(Y_G)}.
\]
As the $G$-action on $Y_G$ is minimal and equicontinuous, $K$ is a compact
group acting transitively on $Y_G$.  The induced $H$-action commutes with $G$
and hence with $K$.  By Lemma~\ref{lem:compact-centralizer}, the image of $H$
in $\Homeo(Y_G)$ is equicontinuous.  Thus $Y_G$ is also an
$H$-equicontinuous factor of $X$, and maximality gives
\[
 Y_G\preceq \MEF(X,H).
\]
Interchanging $G$ and $H$ yields the reverse comparison.  Hence
\[
 \MEF(X,G)=\MEF(X,H)=:Y_0.
\]
The factor $Y_0$ is equicontinuous for both $G$ and $H$, and therefore for
$\Gamma$ by Lemma~\ref{lem:separate-joint-equicontinuity}, applied over the
one-point system.  Hence $Y_0\preceq\MEF(X,\Gamma)$.  Conversely, every
$\Gamma$-equicontinuous factor is $G$-equicontinuous, so
$\MEF(X,\Gamma)\preceq Y_0$.  This proves part~(i).

For part~(ii), put
\[
 Z_G:=\mathsf H_G(q).
\]
By Proposition~\ref{prop:characteristic-successors}, $Z_G$ is a common
$\Gamma$-factor.  The map $Z_G\to Y$ is irreducible.  Since irreducibility is a
property of the underlying continuous surjection, the same map is highly
proximal for the $H$-action and for the joint $\Gamma$-action.  Therefore
\[
 Z_G\preceq_Y\mathsf H_H(q),
 \qquad
 Z_G\preceq_Y\mathsf H_\Gamma(q).
\]
Interchanging $G$ and $H$ gives
\[
 \mathsf H_H(q)\preceq_Y Z_G,
\]
and hence $\mathsf H_G(q)=\mathsf H_H(q)$.  Moreover,
$\mathsf H_\Gamma(q)\to Y$ is irreducible and therefore $G$-highly proximal.
The maximality of $\mathsf H_G(q)$ gives
\[
 \mathsf H_\Gamma(q)\preceq_Y\mathsf H_G(q),
\]
which completes the proof of part~(ii).

For part~(iii), every $\Gamma$-isometric intermediate factor is both
$G$-isometric and $H$-isometric.  Consequently,
\[
 \mathsf I_\Gamma(q)
 \preceq_Y
 M:=\mathsf I_G(q)\wedge_Y\mathsf I_H(q).
\]
By Proposition~\ref{prop:characteristic-successors}, the two maximal
subaction-isometric factors are common $\Gamma$-factors, and hence so is $M$.
The extension $M\to Y$ is a factor over $Y$ of $\mathsf I_G(q)\to Y$, and is
therefore $G$-isometric.  Similarly it is $H$-isometric.  Lemma
\ref{lem:separate-joint-equicontinuity} makes it $\Gamma$-isometric, and the
maximality of $\mathsf I_\Gamma(q)$ gives
\[
 M\preceq_Y\mathsf I_\Gamma(q).
\]
Thus
\[
 \mathsf I_\Gamma(q)
 =\mathsf I_G(q)\wedge_Y\mathsf I_H(q).
\]
Since $\mathsf I_L(q)=X/E_L(q)$ and the correspondence between intermediate
factors and their kernel equivalence relations reverses the lattice order, the
relation identity follows.
\end{proof}

The local comparison immediately extends to any finite family of pairwise
commuting minimal actions.

\begin{corollary}
\label{cor:finite-canonical-comparison}
Let $G_1,\ldots,G_r$
act pairwise commutatively on a compact metric sapce $X$, and suppose that each $G_i$-action is minimal and distal.
Let
$
 \Gamma_r:=\langle G_1,\ldots,G_r\rangle,
$
and let
$
 q:(X,\Gamma_r)\to(Y,\Gamma_r)
$
be a factor map. Then
\[
 \mathsf H_{G_1}(q)
 =
 \cdots
 =
 \mathsf H_{G_r}(q)
 =
 \mathsf H_{\Gamma_r}(q)
\qquad\text{and}\qquad
 \mathsf I_{\Gamma_r}(q)
 =
 \bigwedge_{i=1}^r\mathsf I_{G_i}(q).
\]
\end{corollary}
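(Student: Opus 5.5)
The plan is to argue by induction on $r$, taking the two-action case (Theorem~\ref{thm:canonical-factor-comparison}) as the base case. Set $\Gamma_{r-1}:=\langle G_1,\ldots,G_{r-1}\rangle$. Then $\Gamma_r=\langle \Gamma_{r-1},G_r\rangle$, and the actions of $\Gamma_{r-1}$ and $G_r$ commute because the $G_i$ commute pairwise. I would apply Theorem~\ref{thm:canonical-factor-comparison} to the commuting pair $(\Gamma_{r-1},G_r)$. That theorem needs both actions minimal. The $G_r$-action is minimal by hypothesis, and $\Gamma_{r-1}$ contains the minimal $G_1$-action, so it is minimal too. The distality hypothesis is not needed for the comparison itself; it only guarantees that the canonical objects live inside the distal category. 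The factor map $q$ is $\Gamma_r$-equivariant, so it is a common factor for both $\Gamma_{r-1}$ and $G_r$. It is also $\Gamma_{r-1}$-equivariant, which is what the inductive hypothesis for the family $G_1,\ldots,G_{r-1}$ requires.

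For the highly proximal part, the two-action theorem gives $\mathsf H_{\Gamma_{r-1}}(q)=\mathsf H_{G_r}(q)=\mathsf H_{\Gamma_r}(q)$. The inductive hypothesis gives $\mathsf H_{G_1}(q)=\cdots=\mathsf H_{G_{r-1}}(q)=\mathsf H_{\Gamma_{r-1}}(q)$. Chaining these two strings of equalities yields the first assertion.

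For the isometric part, the two-action theorem gives $\mathsf I_{\Gamma_r}(q)=\mathsf I_{\Gamma_{r-1}}(q)\wedge_Y\mathsf I_{G_r}(q)$. Substituting the inductive formula $\mathsf I_{\Gamma_{r-1}}(q)=\bigwedge_{i<r}\mathsf I_{G_i}(q)$ finishes the proof, once we note that the meet $\wedge_Y$ is associative. Associativity holds because on kernel relations the meet is the closed equivalence relation generated by a union, $R_{Z_1\wedge_Y Z_2}=R_{p_1}\vee R_{p_2}$, and this operation is associative. The only point needing care is that all the meets are taken in the lattice of intermediate factors of $q$ regarded as quotients of $X$. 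This is consistent, since every $\mathsf I_{G_i}(q)$ is a common $\Gamma_r$-factor by Proposition~\ref{prop:characteristic-successors}, applied to each $G_i$ against the commuting group generated by the other $G_j$.

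The step I expect to be the main obstacle is a hidden assumption in Lemma~\ref{lem:separate-joint-equicontinuity}, which the two-action theorem uses. That lemma writes every element of $\langle G,H\rangle$ as $gh$. This remains valid for the pair $(\Gamma_{r-1},G_r)$, because $\Gamma_{r-1}$ commutes with $G_r$ elementwise, so $\Gamma_r=\Gamma_{r-1}G_r$. With that checked, the induction goes through without further work.
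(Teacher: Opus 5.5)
Your proposal is correct and is essentially the argument the paper intends: the paper gives no separate proof and only says that Theorem~\ref{thm:canonical-factor-comparison} ``immediately extends'' to finite families. Your induction, which applies that theorem to the commuting minimal pair $(\Gamma_{r-1},G_r)$ and uses associativity of $\wedge_Y$ via joins of closed equivalence relations, is exactly this extension, and you are right that the distality hypothesis plays no role.
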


More importantly for canonical towers, the same theorem identifies exactly
where two synchronized HPI constructions may first diverge.

\begin{corollary}
\label{cor:first-canonical-discrepancy}
Let $G$ and $H$ be commuting minimal strictly HPI actions on $X$, and
write their canonical constructions in synchronized expanded form, starting
from their common maximal equicontinuous factor. Suppose that the two
constructions agree through a common stage
$
q_\alpha:X\to Y_\alpha.
$
Then a highly proximal successor of $Y_\alpha$ is the same for the
$G$- and $H$-actions, and is also the maximal highly proximal successor for
the joint $\Gamma$-action.

Consequently, the first discrepancy between the two canonical towers can
occur only at an isometric stage. If
$Y_{\alpha+1}^{G}$ and $Y_{\alpha+1}^{H}$ are the corresponding maximal
isometric successors over $Y_\alpha$, then the maximal $\Gamma$-isometric
successor is
\[
Y_{\alpha+1}^{\Gamma}
=
Y_{\alpha+1}^{G}\wedge_{Y_\alpha}Y_{\alpha+1}^{H}.
\]
\end{corollary}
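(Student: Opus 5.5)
This corollary is a direct application of Theorem~\ref{thm:canonical-factor-comparison} at the stage where the two constructions first separate. Start with the common beginning. By Theorem~\ref{thm:canonical-factor-comparison}(i), the $G$-, $H$- and $\Gamma$-actions have the same maximal equicontinuous factor $Y_0$ as a quotient of $X$. So both synchronized constructions can start at the same stage. By Corollary~\ref{cor:canonical-characteristic} and Proposition~\ref{prop:characteristic-successors}, every stage of either canonical construction is a common $\Gamma$-factor. Hence whenever the constructions agree through $q_\alpha:X\to Y_\alpha$, the map $q_\alpha$ is a factor map of $(X,\Gamma)$. This is exactly the hypothesis of Theorem~\ref{thm:canonical-factor-comparison}(ii),(iii) with $q=q_\alpha$.

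Next, treat a highly proximal successor stage. The canonical $G$-successor is $\mathsf H_G(q_\alpha)$ and the canonical $H$-successor is $\mathsf H_H(q_\alpha)$. Theorem~\ref{thm:canonical-factor-comparison}(ii) gives $\mathsf H_G(q_\alpha)=\mathsf H_H(q_\alpha)=\mathsf H_\Gamma(q_\alpha)$ as quotients of $X$ over $Y_\alpha$. So the two towers still agree at stage $\alpha+1$, and this common stage is the maximal $\Gamma$-highly proximal successor.

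Then argue about the first discrepancy by transfinite induction. Let $\beta$ be the least ordinal at which the synchronized constructions differ. Then $\beta$ is not $0$, since the common start is the MEF. It is not a limit ordinal, because inverse limits of identical systems with identical bonding maps coincide. So $\beta=\alpha+1$, and the two constructions agree through $Y_\alpha$. If stage $\alpha+1$ were highly proximal for both, the previous paragraph would rule out a discrepancy. Hence at least one construction performs an isometric step there. In the synchronized expanded form, both then take their maximal isometric successors $Y^G_{\alpha+1}=\mathsf I_G(q_\alpha)$ and $Y^H_{\alpha+1}=\mathsf I_H(q_\alpha)$. Theorem~\ref{thm:canonical-factor-comparison}(iii) then gives
\[
Y^\Gamma_{\alpha+1}=\mathsf I_\Gamma(q_\alpha)=\mathsf I_G(q_\alpha)\wedge_{Y_\alpha}\mathsf I_H(q_\alpha)=Y^G_{\alpha+1}\wedge_{Y_\alpha}Y^H_{\alpha+1}.
\]

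The only delicate point is bookkeeping. We must pin down the meaning of ``synchronized expanded form'' so that at each successor index both constructions are asked to perform the same kind of operation: a maximal highly proximal step or a maximal isometric step. This can be arranged by inserting trivial steps, allowing identity maps whenever the maximal operation is trivial. Once this convention is fixed, everything reduces to Theorem~\ref{thm:canonical-factor-comparison}. No new dynamics is needed beyond the fact, from Proposition~\ref{prop:characteristic-successors}, that each stage is a common $\Gamma$-factor.
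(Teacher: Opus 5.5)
Your proposal is correct and follows the paper's route. The paper gives no separate proof and treats the corollary as an immediate consequence of Theorem~\ref{thm:canonical-factor-comparison}(ii),(iii) applied to $q=q_\alpha$, which is a common $\Gamma$-factor by Proposition~\ref{prop:characteristic-successors} and Corollary~\ref{cor:canonical-characteristic}; your transfinite bookkeeping for the first discrepancy (common maximal equicontinuous factor at the start, agreement at limit stages, identity padding in the synchronized form) just makes explicit what the paper leaves implicit.
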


\subsection{The joint Furstenberg tower}

In the distal category there are no highly proximal successor stages.  The
local meet formula can therefore be iterated, giving both joint distality and
a canonical recursive description of the generated action.

\begin{theorem}[Joint distality and the joint Furstenberg tower]
\label{thm:joint-distal}
Assume that both $(X,G)$ and $(X,H)$ are minimal distal.  Then
$(X,\Gamma)$ is minimal distal.

Moreover, the canonical Furstenberg tower of the joint action is obtained
recursively as follows.  Let
\[
 Y_0:=\MEF(X,G)=\MEF(X,H)=\MEF(X,\Gamma).
\]
If
$
 q_\alpha:X\to Y_\alpha
$
is the current joint canonical factor, define
\[
 Y_{\alpha+1}
 :=\mathsf I_G(q_\alpha)\wedge_{Y_\alpha}\mathsf I_H(q_\alpha),
\]
and at a limit ordinal $\lambda$ put
\[
 Y_\lambda:=\varprojlim_{\alpha<\lambda}Y_\alpha.
\]
After deleting identity stages, this is the canonical Furstenberg tower of
$(X,\Gamma)$.
\end{theorem}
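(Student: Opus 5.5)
The plan is to run the recursion in the statement directly, proving by transfinite induction on $\alpha$ that every $Y_\alpha$ is a common $\Gamma$-factor of $X$ and that $Y_\alpha$ equals the $\alpha$-th stage of the canonical Furstenberg tower of $(X,\Gamma)$ (before deleting identity stages). Distality of $(X,\Gamma)$ then follows by showing that the recursion reaches $X$.

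First, the base and the induction steps. For the base, $Y_0$ is the common maximal equicontinuous factor given by Theorem~\ref{thm:canonical-factor-comparison}(i). For the successor step, $q_\alpha$ is a common $\Gamma$-factor map. By Proposition~\ref{prop:characteristic-successors}, both $\mathsf I_G(q_\alpha)$ and $\mathsf I_H(q_\alpha)$ are common $\Gamma$-factors. Theorem~\ref{thm:canonical-factor-comparison}(iii) then identifies their meet with $\mathsf I_\Gamma(q_\alpha)$, which is by definition the canonical joint successor. At a limit ordinal, inverse limits of compatible $\Gamma$-factors are $\Gamma$-factors, exactly as in Corollary~\ref{cor:canonical-characteristic}. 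This shows that the recursion produces the canonical $\Gamma$-tower. Since $X$ is metrizable, the strictly decreasing sequence of closed relations $R_{q_\alpha}$ must stabilise at a countable ordinal $\eta$, where $Y_{\eta+1}=Y_\eta$.

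The main point is that stabilisation occurs only at $Y_\eta=X$. Suppose $q_\eta$ is not injective and $Y_{\eta+1}=Y_\eta$, that is, $\mathsf I_G(q_\eta)\wedge_{Y_\eta}\mathsf I_H(q_\eta)=Y_\eta$. Since $(X,G)$ is minimal distal, $q_\eta$ is a $G$-distal extension of minimal flows. By the relative Furstenberg theorem, a nontrivial distal extension has a nontrivial isometric intermediate factor, so $\mathsf I_G(q_\eta)\neq Y_\eta$, and similarly $\mathsf I_H(q_\eta)\neq Y_\eta$. The difficulty is that two nontrivial intermediate factors can have trivial meet; Theorem~\ref{thm:intro-counterexample} shows such meets are genuinely delicate. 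I would therefore not try to prove nontriviality of the meet. Instead, I would get $\Gamma$-distality of $X$ directly.

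The direct argument runs the tower for $G$. The canonical $G$-Furstenberg tower $\{X^G_\alpha\}$ consists of common $\Gamma$-factors by Corollary~\ref{cor:canonical-characteristic}, and its successors are $G$-isometric. At each successor stage, Lemma~\ref{lem:mixed-isometric} applies to every point: all points of $X^G_{\alpha+1}$ are $H$-distal, because factors of distal flows are distal. Arguing by transfinite induction and using that inverse limit coordinates separate points, every point of every stage is $\Gamma$-distal. Hence $(X,\Gamma)$ is distal, and it is minimal because it contains the minimal $G$-action.

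Once $(X,\Gamma)$ is known to be minimal distal, Furstenberg's theorem for $\Gamma$ says that its canonical tower, built from maximal $\Gamma$-isometric successors, reaches $X$. This is exactly the $Y_\alpha$-recursion established above, so it terminates at $X$, and deleting the repeated identity stages gives the canonical Furstenberg tower. The main obstacle is this termination; it is avoided by proving distality through the $G$-tower rather than the meet tower.
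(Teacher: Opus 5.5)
Your proposal is correct and follows essentially the same route as the paper. You get joint distality from the canonical $G$-tower using Corollary~\ref{cor:canonical-characteristic}, $H$-distality of factors, Lemma~\ref{lem:mixed-isometric} and the inverse-limit step, and you then identify the meet recursion with the canonical $\Gamma$-tower via Theorem~\ref{thm:canonical-factor-comparison}(iii), with Furstenberg's theorem for the distal $\Gamma$-action guaranteeing termination at $X$. The only difference is the order: you identify the tower first and defer termination, whereas the paper proves distality first.
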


\begin{proof}
Consider the canonical Furstenberg distal tower of $(X,G)$:
\[
 \{*\}=X_0
 \longleftarrow X_1
 \longleftarrow\cdots
 \longleftarrow X_\alpha
 \longleftarrow\cdots
 \longleftarrow X_\eta=X.
\]
Every successor map is $G$-isometric and every limit stage is an inverse
limit.  Corollary~\ref{cor:canonical-characteristic} makes every $X_\alpha$ a
common $G,H$-factor.  Since distality passes to factors, every point of every
$(X_\alpha,H)$ is $H$-distal.

We prove by transfinite induction that every point of $X_\alpha$ is
$\Gamma$-distal.  The assertion is trivial at $X_0$.  Suppose that
$
 \pi_\alpha:X_{\alpha+1}\to X_\alpha
$
is an isometric successor and that every point of $X_\alpha$ is
$\Gamma$-distal.  For $x\in X_{\alpha+1}$, the point
$\pi_\alpha(x)$ is $\Gamma$-distal and $x$ is $H$-distal.
Lemma~\ref{lem:mixed-isometric} shows that $x$ is $\Gamma$-distal.  At a
limit ordinal, two $\Gamma$-proximal points have equal projections to every
preceding stage and hence are equal.  Thus every point of $X$ is
$\Gamma$-distal.  Minimality is automatic.

The joint action is therefore distal, so its canonical Furstenberg tower
consists only of maximal isometric successor operations and inverse limits.
At every current common factor $q_\alpha:X\to Y_\alpha$, part~(iii) of
Theorem~\ref{thm:canonical-factor-comparison} identifies the maximal
$\Gamma$-isometric successor with
\[
 \mathsf I_G(q_\alpha)\wedge_{Y_\alpha}\mathsf I_H(q_\alpha).
\]
The limit prescription is the canonical inverse-limit construction.
\end{proof}
Applying Theorem~\ref{thm:joint-distal} inductively, we immediately obtain the following.
\begin{corollary}
\label{cor:finite-distal-families}
Let $G_1,\ldots,G_r$ act pairwise commutatively and minimally distally on
$X$, and put $\Gamma_r:=\langle G_1,\ldots,G_r\rangle$.  Then the joint
$\Gamma_r$-action is minimal distal.  At every successor stage of its
canonical Furstenberg tower, the maximal isometric factor is
$
 \bigwedge_{i=1}^r\mathsf I_{G_i}(q_\alpha).
$
\end{corollary}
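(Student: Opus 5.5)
We will prove Corollary~\ref{cor:finite-distal-families} by induction on $r$, applying Theorem~\ref{thm:joint-distal} to two groups at a time. For $r=1$ there is nothing to prove. Suppose the statement holds for $r-1$, so that $\Gamma_{r-1}:=\langle G_1,\ldots,G_{r-1}\rangle$ acts minimally and distally on $X$. Because the $G_i$ commute pairwise, every generator of $\Gamma_{r-1}$ commutes with every element of $G_r$. Hence $\Gamma_{r-1}$ and $G_r$ act commutatively on $X$, and both actions are minimal distal. Theorem~\ref{thm:joint-distal}, applied with $G=\Gamma_{r-1}$ and $H=G_r$, then shows that $\Gamma_r=\langle\Gamma_{r-1},G_r\rangle$ acts minimally and distally. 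This settles the first assertion.

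For the meet formula, fix a successor stage $q_\alpha:X\to Y_\alpha$ of the canonical Furstenberg tower of $\Gamma_r$. This is a common $\Gamma_r$-factor. We argue by an inner induction on $r$, now with $q_\alpha$ held fixed. Applying Theorem~\ref{thm:canonical-factor-comparison}(iii) to the commuting pair $(\Gamma_{r-1},G_r)$ gives
\[
\mathsf I_{\Gamma_r}(q_\alpha)=\mathsf I_{\Gamma_{r-1}}(q_\alpha)\wedge_{Y_\alpha}\mathsf I_{G_r}(q_\alpha).
\]
Theorem~\ref{thm:canonical-factor-comparison}(iii) uses only commutativity and minimality of the two subactions together with the fact that $q_\alpha$ is a common factor. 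The inner induction hypothesis therefore applies to the family $G_1,\ldots,G_{r-1}$ and the same map $q_\alpha$, which is in particular a $\Gamma_{r-1}$-factor. It gives $\mathsf I_{\Gamma_{r-1}}(q_\alpha)=\bigwedge_{i<r}\mathsf I_{G_i}(q_\alpha)$. The meet is associative, since on kernel relations it is the join of closed equivalence relations. Substituting yields $\mathsf I_{\Gamma_r}(q_\alpha)=\bigwedge_{i=1}^r\mathsf I_{G_i}(q_\alpha)$.

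The main point requiring care is that the inner induction is carried out over an arbitrary common factor, not along the towers of the intermediate groups $\Gamma_{r-1}$. The tower of $\Gamma_{r-1}$ may differ from that of $\Gamma_r$, so the formula has to be proved for every common factor map. The formula $\mathsf I_{\Gamma}(q)=\mathsf I_G(q)\wedge_Y\mathsf I_H(q)$ holds for every such $q$, so this causes no difficulty. We also need that $\mathsf I_{G_i}(q_\alpha)$ is a common factor for all the groups. This follows from Proposition~\ref{prop:characteristic-successors}, applied with $G=G_i$ and $H=\langle G_j:j\neq i\rangle$, which commutes with $G_i$.
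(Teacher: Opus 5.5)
Your proposal is correct and follows the paper's route: induction on $r$, with Theorem~\ref{thm:joint-distal} applied to the commuting pair $(\Gamma_{r-1},G_r)$ for joint minimal distality, and Theorem~\ref{thm:canonical-factor-comparison}(iii) plus associativity of the meet for the successor formula. You point out that the meet formula must be proved over an arbitrary common factor, not along the towers of the intermediate groups $\Gamma_{r-1}$; this is exactly the content of Corollary~\ref{cor:finite-canonical-comparison}, and it spells out what the paper's one-line ``apply inductively'' leaves implicit.
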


The meet formula thus produces the joint canonical tower, but it does not
force either subaction tower to coincide with it.  The next section shows
that the meet can be proper even for commuting minimal distal
homeomorphisms.

\section{Canonical non-rigidity: a distal counterexample}
\label{sec:counterexample}

We now show that the meet appearing in
Theorem~\ref{thm:canonical-factor-comparison} can be strict.  More
precisely, we construct commuting minimal distal homeomorphisms $T$ and
$S$ of $\T^3$ which have the same maximal equicontinuous factor
$Y=\T$, but whose maximal isometric factors over $Y$ are different:
\[
   \mathsf I_{\langle T\rangle}(Y)=\T^3,
   \qquad
   \mathsf I_{\langle S\rangle}(Y)
   =
   \mathsf I_{\langle T,S\rangle}(Y)
   =
   \T^2.
\]
The construction combines a small-divisor argument, used to guarantee
minimality and to identify the maximal equicontinuous factor, with a fibre
shear, which determines the relative equicontinuous structure relation for
the $S$-action.

\subsection{Construction and minimality}

We first isolate the two cohomological tools used to choose the cocycles.

Write
\[
 \e(t):=\exp(2\pi i t),
 \qquad t\in\R,
\]
and identify real-valued cocycles with their classes modulo $1$ when they
occur in torus skew products.  For $h\in C(\T,\mathbb C)$, use the convention
\[
 \widehat h(k)=\int_\T h(t)\e(-kt)\,dt.
\]

\begin{lemma}\label{lem:fourier-obstruction}
Let $\alpha\in\T$ be irrational and let $h\in C(\T,\R)$.  Suppose that there
exist $\zeta\in\T$ and $H\in C(\T,\T)$ such that
\begin{equation}\label{eq:circle-cohomology}
 \e(h(t))=\zeta\frac{H(t+\alpha)}{H(t)}
 \qquad(t\in\T).
\end{equation}
Then there exists $u\in C(\T,\R)$ such that, for every nonzero $k\in\Z$,
\begin{equation}\label{eq:fourier-transfer}
 \widehat h(k)=\bigl(\e(k\alpha)-1\bigr)\widehat u(k).
\end{equation}
Consequently, if $|k_j|\to\infty$, then
\begin{equation}\label{eq:fourier-ratio-zero}
 \frac{\widehat h(k_j)}{\e(k_j\alpha)-1}\longrightarrow0.
\end{equation}
\end{lemma}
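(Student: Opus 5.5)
The plan is to lift the multiplicative relation \eqref{eq:circle-cohomology} to an additive one on $\R$ and then compare Fourier coefficients. Since $H\in C(\T,\T)$ is a continuous map of the circle into itself, it has a topological degree $m\in\Z$. I will write
\[
 H(t)=\e\bigl(mt+u_0(t)\bigr),\qquad u_0\in C(\T,\R),
\]
which is possible because $t\mapsto H(t)\e(-mt)$ has degree zero and therefore admits a continuous real logarithm on $\T$. I also fix $c\in\R$ with $\zeta=\e(c)$. Substituting into \eqref{eq:circle-cohomology} gives
\[
 \e(h(t))=\e\bigl(c+m\alpha+u_0(t+\alpha)-u_0(t)\bigr),
\]
so the continuous real function $h(t)-c-m\alpha-u_0(t+\alpha)+u_0(t)$ takes values in $\Z$. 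Since $\T$ is connected, this function equals a constant integer $N$.

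This gives the additive identity
\[
 h(t)=c'+u_0(t+\alpha)-u_0(t),\qquad c'=c+m\alpha+N,
\]
and I take $u:=u_0$. Computing Fourier coefficients with the convention $\widehat h(k)=\int_\T h(t)\e(-kt)\,dt$, the constant contributes nothing for $k\neq0$. Translation gives $\widehat{u(\cdot+\alpha)}(k)=\e(k\alpha)\widehat u(k)$. Together these yield \eqref{eq:fourier-transfer}.

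For the consequence, irrationality of $\alpha$ gives $\e(k\alpha)\neq1$ for every $k\neq0$, so the ratio equals $\widehat u(k_j)$. This tends to $0$ by the Riemann--Lebesgue lemma, since $u$ is continuous and hence in $L^1$.

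The main point requiring care is the lifting step. The degree $m$ must be split off so that the logarithm is a genuinely periodic function. One must also notice that the degree term only contributes the constant $m\alpha$: the linear parts cancel, since $m(t+\alpha)-mt=m\alpha$. Finally, the integer ambiguity is constant by connectedness. Once that is done, everything else is routine.
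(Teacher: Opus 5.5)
Your proposal is correct and follows essentially the same route as the paper: write $H(t)=\e(mt+u(t))$ and $\zeta=\e(c)$, note that the continuous integer-valued difference is constant, compare nonzero Fourier coefficients, and finish with Riemann--Lebesgue. You also make explicit two details the paper leaves implicit, namely that the linear part contributes only the constant $m\alpha$ and that irrationality of $\alpha$ keeps the denominators $\e(k\alpha)-1$ nonzero.
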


\begin{proof}
Write $H(t)=\e(dt+u(t))$ for some $d\in\Z$ and $u\in C(\T,\R)$, and write
$\zeta=\e(c)$.  Equation~\eqref{eq:circle-cohomology} implies that
\[
 h(t)-c-d\alpha-u(t+\alpha)+u(t)
\]
is a continuous integer-valued function and hence constant.  Comparing
nonzero Fourier coefficients gives \eqref{eq:fourier-transfer}.  The final
claim is the Riemann--Lebesgue lemma.
\end{proof}

We use the following standard compact-group extension criterion
\cite[Theorem~1 and Corollary~2]{Parry1969}.

\begin{lemma}\label{lem:parry}
Let $R:Y\to Y$ be minimal, let $K$ be a compact metrizable abelian group, and
let $c:Y\to K$ be continuous.  The skew product
$
 R_c(y,k)=(Ry,k+c(y))
$
is minimal if and only if there do not exist a nontrivial character
$\chi\in\widehat K$ and $H\in C(Y,\T)$ such that
$
 \chi(c(y))=\frac{H(Ry)}{H(y)}
$ for all $y\in Y$.
\end{lemma}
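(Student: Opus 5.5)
The plan is to prove the two implications separately, using the right-translation action of $K$ on $Y\times K$, which commutes with $R_c$.

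\emph{Necessity.} Suppose $R_c$ is minimal but there are a nontrivial $\chi\in\widehat K$ and $H\in C(Y,\T)$ with $\chi(c(y))=H(Ry)/H(y)$. I would define $F(y,k):=\overline{H(y)}\,\chi(k)$. This function is continuous on $Y\times K$. It is not constant, because $\chi$ is nontrivial and so $F$ varies along each fibre $\{y\}\times K$. It is $R_c$-invariant:
\[
 F(R_c(y,k))=\overline{H(Ry)}\,\chi(k)\chi(c(y))=\overline{H(y)}\,\chi(k)=F(y,k).
\]
A continuous invariant function is constant on every orbit closure. Minimality makes every orbit dense, so $F$ would be constant, which is a contradiction.

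\emph{Sufficiency.} Assume no such pair $(\chi,H)$ exists, and let $M\subseteq Y\times K$ be a minimal $R_c$-subset. The projection of $M$ to $Y$ is closed and $R$-invariant, so it is onto $Y$ by minimality of $R$.

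For $a\in K$, the translate $M+a:=\{(y,k+a):(y,k)\in M\}$ is again minimal, because translation commutes with $R_c$. Two minimal sets that meet must coincide. Hence
\[
 L:=\{a\in K:M+a=M\}
\]
is a closed subgroup of $K$.

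Write $M_y:=\{k:(y,k)\in M\}$ for the fibre over $y$. If $k,k'\in M_y$, then $M+(k'-k)$ meets $M$ at $(y,k')$, so it equals $M$ and $k'-k\in L$. Conversely $M_y+L\subseteq M_y$. Therefore each fibre $M_y$ is a single coset $\kappa(y)+L$, and this defines a map $\kappa:Y\to K/L$.

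The map $\kappa$ is continuous, since its graph is the image of the compact set $M$ in $Y\times K/L$, and a map between compact Hausdorff spaces with closed graph is continuous. Invariance of $M$ under $R_c$ gives
\[
 \kappa(Ry)=\kappa(y)+c(y)\pmod L .
\]

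If $L\neq K$, then $K/L$ is a nontrivial compact abelian group. By Pontryagin duality it has a nontrivial character; pulling it back gives a nontrivial $\chi\in\widehat K$ with $\chi|_L\equiv1$. Then $H(y):=\chi(\kappa(y))$ is well defined and continuous, and the relation above yields
\[
 H(Ry)=H(y)\,\chi(c(y)),
\]
which contradicts the hypothesis. Hence $L=K$, every fibre $M_y$ is all of $K$, and $M=Y\times K$. So $R_c$ is minimal.

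The only delicate points are the continuity of the section $\kappa$, handled by the closed-graph argument, and the existence of a character of $K$ that is nontrivial but trivial on $L$, which is exactly where Pontryagin duality for compact abelian groups is used. Everything else is formal.
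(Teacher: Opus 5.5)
Your argument is correct, and it takes a different route from the paper only in that the paper gives no proof: it imports the lemma from Parry's 1969 work on compact abelian group extensions (Theorem~1 and Corollary~2 there). Your proposal replaces that citation with the standard direct topological proof. Necessity comes from the continuous $R_c$-invariant function $\overline{H(y)}\,\chi(k)$. For sufficiency, you show that the stabilizer $L$ of a minimal set $M\subseteq Y\times K$ under the commuting $K$-translations is a closed subgroup, that $M$ is the graph of a continuous section $\kappa\colon Y\to K/L$ with $\kappa(Ry)=\kappa(y)+c(y)$ modulo $L$, and then you pull back a nontrivial character of $K/L$. The steps you left implicit are fine. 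Closedness of $L$ holds because a limit $a$ of elements of $L$ gives $M+a\subseteq M$, which forces equality of the two minimal sets. The closed-graph step is legitimate since $K/L$ is compact Hausdorff. The citation keeps the paper short and points to the classical source. Your route makes the lemma self-contained, purely topological, and independent of metrizability of $K$. It also gives a little more than the bare criterion: every minimal subset of $Y\times K$ is a translate $M+a$. Moreover, running your necessity computation on $M$ instead of on $Y\times K$ shows that the characters $\chi$ for which $\chi\circ c$ is a continuous multiplicative coboundary are exactly those with $\chi|_L\equiv1$.
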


We now choose the rotation parameters and cocycles.  The small-divisor
estimates are designed simultaneously to force minimality and to make the
two skew products commute.

For $j\geq1$, set
$
 k_j:=2^{2j(j-1)}
$
and define
\begin{equation}\label{eq:tau-sigma}
 \tau:=\sum_{r=1}^\infty2^{-2r(r+1)},
 \qquad
 \sigma:=\sum_{r=1}^\infty2^{-2r^2}.
\end{equation}
Put
\begin{equation}\label{eq:lambda-mu}
 \lambda_j:=\e(k_j\tau)-1,
 \qquad
 \mu_j:=\e(k_j\sigma)-1.
\end{equation}

\begin{lemma}[Small divisors]\label{lem:small-divisors}
The numbers $\tau$ and $\sigma$ are irrational, and as $j\to\infty$,
\begin{equation}\label{eq:small-divisor-asymptotics}
 |\lambda_j|\asymp2^{-4j},
 \qquad
 |\mu_j|\asymp2^{-2j}.
\end{equation}
In particular,
\begin{equation}\label{eq:bounded-small-divisor-ratio}
 \sup_{j\geq1}\left|\frac{\mu_j^2}{\lambda_j}\right|<\infty.
\end{equation}
\end{lemma}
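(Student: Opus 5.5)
The plan is to compute the fractional parts of $k_j\tau$ and $k_j\sigma$ exactly by splitting each series at $r=j$. Every $|\lambda_j|$ and $|\mu_j|$ then reduces to a single dominant binary digit. The only analytic input is the elementary estimate
\[
 |\e(x)-1|=2|\sin(\pi x)|,\qquad 4|x|\le|\e(x)-1|\le 2\pi|x|\quad(|x|\le 1/2),
\]
so it suffices to find the distance from $k_j\tau$ and $k_j\sigma$ to $\Z$ up to bounded factors.

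For $\tau$, we have $k_j\tau=\sum_{r\ge1}2^{2j(j-1)-2r(r+1)}$. For $r\le j-1$ the exponent satisfies $2j(j-1)-2r(r+1)\ge 2j(j-1)-2(j-1)j=0$, so these terms are integers and drop out modulo $1$. The term $r=j$ has exponent $2j^2-2j-2j^2-2j=-4j$. For $r\ge j+1$ the exponents are at most $-8j-4$ and decrease superlinearly in $r$, so the tail is at most $2\cdot 2^{-8j-4}$. Hence
\[
 k_j\tau\equiv 2^{-4j}\bigl(1+O(2^{-4j})\bigr)\pmod 1,
\]
which lies in $(0,1/2)$, and the sine estimate gives $|\lambda_j|\asymp 2^{-4j}$.

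For $\sigma$, the same splitting works with exponents $2j(j-1)-2r^2$. For $r\le j-1$ the exponent is at least $2j^2-2j-2(j-1)^2=2j-2\ge0$, so these terms are again integers. The term $r=j$ contributes $2^{-2j}$. The term $r=j+1$ contributes $2^{-6j-2}$, and the remaining terms are even smaller. Thus $k_j\sigma\equiv 2^{-2j}(1+O(2^{-4j}))\pmod 1$, and $|\mu_j|\asymp 2^{-2j}$.

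Irrationality follows from the same computation. Suppose $\tau=p/q$. Then $k_j\tau\bmod 1$ lies in $\frac1q\Z$ for every $j$. But we have just shown it is nonzero and tends to $0$, which is impossible. The identical argument applies to $\sigma$. Finally, $|\mu_j^2/\lambda_j|\asymp 2^{-4j}/2^{-4j}=1$ for large $j$. For the finitely many remaining $j$, each $\lambda_j\neq0$ because $k_j\tau\notin\Z$. This gives \eqref{eq:bounded-small-divisor-ratio}.

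The only point needing care is the uniform tail bound, namely that the terms with $r\ge j+1$ are dominated by a fixed multiple of the first such term. This holds because consecutive exponents differ by at least $4j+4$ for $\tau$ and by at least $4j+2$ for $\sigma$, so the tails are bounded by geometric series.
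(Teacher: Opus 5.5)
Your proposal is correct and follows essentially the same route as the paper. You split the series at $r=j$, discard the integer terms with $r\le j-1$, isolate the dominant digit $2^{-4j}$ (resp.\ $2^{-2j}$) with a tail of order $2^{-8j}$ (resp.\ $2^{-6j}$), and compare $|\e(s)-1|$ with $|s|$.

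The only difference is minor and concerns irrationality. You derive it from the same computation: the fractional parts of $k_j\tau$ are nonzero yet tend to $0$, which is impossible inside $\frac1q\Z$. The paper instead appeals to the binary expansions not being eventually periodic. Both arguments are valid.
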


\begin{proof}
The binary expansions in \eqref{eq:tau-sigma} have nonzero digits at
positions $2r(r+1)$ and $2r^2$, respectively.  The gaps between successive
nonzero digits tend to infinity, and neither expansion is eventually
periodic.  Hence neither number is rational.

For $r\leq j-1$, the quantity
$2^{2j(j-1)-2r(r+1)}$ is an integer.  Therefore, modulo integers,
\[
 k_j\tau=2^{-4j}
 +\sum_{r=j+1}^\infty2^{2j(j-1)-2r(r+1)}
 =2^{-4j}+O(2^{-8j}).
\]
Similarly,
\[
 k_j\sigma=2^{-2j}
 +\sum_{r=j+1}^\infty2^{2j(j-1)-2r^2}
 =2^{-2j}+O(2^{-6j}).
\]
Since $|\e(s)-1|=2|\sin(\pi s)|\asymp|s|$ as $s\to0$,
\eqref{eq:small-divisor-asymptotics} follows.  The boundedness in
\eqref{eq:bounded-small-divisor-ratio} follows after enlarging the bound to
cover finitely many initial indices.
\end{proof}

Let $a_j=2^{-j}$ and define real-valued continuous functions on $\T$ by
\begin{align}
 f(t):=2\operatorname{Re}\sum_{j=1}^\infty a_j\e(k_jt),
    \quad                                                    
 g(t):=f(t+\sigma)-f(t),      \quad                        
 v(t):=2\operatorname{Re}\sum_{j=1}^\infty
       a_j\frac{\mu_j^2}{\lambda_j}\e(k_jt).          
\end{align}
Both displayed series converge absolutely and uniformly.  At the positive
frequency $k_j$,
\begin{equation}\label{eq:basic-Fourier-coefficients}
 \widehat f(k_j)=a_j,
 \qquad
 \widehat g(k_j)=a_j\mu_j,
 \qquad
 \widehat v(k_j)=a_j\frac{\mu_j^2}{\lambda_j}.
\end{equation}
Comparison of Fourier coefficients gives
\begin{equation}\label{eq:cocycle-commutation}
 v(t+\tau)-v(t)=g(t+\sigma)-g(t).
\end{equation}

Set $X=\T^3$, with coordinates $(t,x,z)$, and define
\begin{align}
 T(t,x,z)=(t+\tau,\ x+f(t),\ z+g(t))\quad               \text{and}
\quad S(t,x,z)=(t+\sigma,\ x+z,\ z+v(t)).
\end{align}
All coordinates are taken modulo $1$.

\begin{lemma}\label{lem:commutation-example}
The homeomorphisms $T$ and $S$ commute.
\end{lemma}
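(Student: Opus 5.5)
We verify $TS=ST$ by composing coordinatewise on $\T^3$ and comparing the three coordinates modulo $1$. The first coordinate is a rotation in both maps, so both compositions send $t$ to $t+\tau+\sigma$. The real content lies in the $x$- and $z$-coordinates, where the cocycle identities must be invoked.

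Start with $ST$. Applying $T$ gives $(t+\tau,\ x+f(t),\ z+g(t))$. Applying $S$ then gives
\[
 ST(t,x,z)=\bigl(t+\tau+\sigma,\ x+f(t)+z+g(t),\ z+g(t)+v(t+\tau)\bigr).
\]
For $TS$, applying $S$ first gives $(t+\sigma,\ x+z,\ z+v(t))$. Applying $T$ then gives
\[
 TS(t,x,z)=\bigl(t+\sigma+\tau,\ x+z+f(t+\sigma),\ z+v(t)+g(t+\sigma)\bigr).
\]

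In the middle coordinate we need $f(t)+g(t)=f(t+\sigma)$. This is exactly the definition $g(t)=f(t+\sigma)-f(t)$.

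In the last coordinate we need $g(t)+v(t+\tau)=v(t)+g(t+\sigma)$. This is the identity \eqref{eq:cocycle-commutation}, which holds as a genuine equality of real functions and therefore also modulo $1$.

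The only step that needs care is \eqref{eq:cocycle-commutation}. Both sides are real, continuous functions given by absolutely convergent trigonometric series supported on the frequencies $\pm k_j$, so it suffices to compare the coefficients at $k_j$; the coefficients at $-k_j$ follow by conjugation.

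At $k_j$, the left-hand side has coefficient $\widehat v(k_j)\lambda_j=a_j\mu_j^2$. The right-hand side has coefficient $\widehat g(k_j)\mu_j=a_j\mu_j^2$, using \eqref{eq:basic-Fourier-coefficients}. These agree, so both sides have the same Fourier coefficients.

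Absolute convergence of the $v$-series is guaranteed by \eqref{eq:bounded-small-divisor-ratio}. This ensures both sides are continuous functions, so equality of their Fourier coefficients gives equality everywhere.

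Hence $TS=ST$.
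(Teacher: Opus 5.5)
Your proof is correct and follows the paper's argument. You compare the three coordinates of $TS$ and $ST$, using $f(t+\sigma)=f(t)+g(t)$ in the $x$-coordinate and the cocycle identity \eqref{eq:cocycle-commutation} in the $z$-coordinate. Your check of \eqref{eq:cocycle-commutation} through $\widehat v(k_j)\lambda_j=a_j\mu_j^2=\widehat g(k_j)\mu_j$, with absolute convergence of the $v$-series coming from \eqref{eq:bounded-small-divisor-ratio}, simply spells out the paper's ``comparison of Fourier coefficients.''
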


\begin{proof}
The identities $f(t+\sigma)=f(t)+g(t)$ and
\eqref{eq:cocycle-commutation} give
$
 TS(t,x,z)=ST(t,x,z)
$
by direct comparison of the three coordinates.
\end{proof}

Put
\[
 Y:=\T
 \qquad\text{and}\qquad
 Z:=\T^2
\]
with coordinates $(t,z)$ on $Z$, and define
\[
 \pi(t,x,z)=t,
 \qquad
 \rho(t,x,z)=(t,z),
 \qquad
 \pi_0(t,z)=t.
\]
The induced maps on $Z$ are
\begin{align*}
 T_1(t,z)=(t+\tau,z+g(t))\quad\text{and}\quad
 S_1(t,z)=(t+\sigma,z+v(t)).
\end{align*}

\begin{lemma}\label{lem:cohomological-obstructions}
The following hold.
\begin{enumerate}[label=\textup{(\roman*)}]
\item If $(m,n)\in\Z^2\setminus\{(0,0)\}$, there are no
      $\zeta\in\T$ and $H\in C(\T,\T)$ satisfying
      \begin{equation}\label{eq:T-general-obstruction}
       \e(mf(t)+ng(t))
       =\zeta\frac{H(t+\tau)}{H(t)}.
      \end{equation}
\item If $n\in\Z\setminus\{0\}$, there are no
      $\zeta\in\T$ and $H\in C(\T,\T)$ satisfying
      \begin{equation}\label{eq:S1-general-obstruction}
       \e(nv(t))
       =\zeta\frac{H(t+\sigma)}{H(t)}.
      \end{equation}
\end{enumerate}
\end{lemma}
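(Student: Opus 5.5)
The plan is to reduce both parts to Lemma~\ref{lem:fourier-obstruction} and then contradict its conclusion \eqref{eq:fourier-ratio-zero} along the frequencies $k_j$, using the small-divisor asymptotics of Lemma~\ref{lem:small-divisors}. Suppose, for contradiction, that \eqref{eq:T-general-obstruction} (respectively \eqref{eq:S1-general-obstruction}) has a solution. Then Lemma~\ref{lem:fourier-obstruction} applies to the real continuous function $h:=mf+ng$ with rotation number $\alpha=\tau$ (respectively $h:=nv$ with $\alpha=\sigma$). It yields
\[
 \frac{\widehat h(k_j)}{\e(k_j\alpha)-1}\longrightarrow 0,
\]
since $k_j=2^{2j(j-1)}$ is strictly increasing for $j\geq1$, so $|k_j|\to\infty$. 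It therefore suffices to show that this ratio is unbounded in both cases.

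First I will record the Fourier coefficients at $k_j$. The frequencies $k_j$ are distinct positive integers, and each series defining $f,g,v$ converges absolutely, so the coefficients can be computed termwise. At the frequency $k_j$ only the $j$-th term of $2\operatorname{Re}\sum_j(\cdots)$ contributes; the conjugate terms sit at the negative frequencies $-k_j$. This gives exactly \eqref{eq:basic-Fourier-coefficients}. For $v$, the absolute convergence uses the bound \eqref{eq:bounded-small-divisor-ratio}. Consequently
\[
 \widehat{mf+ng}(k_j)=a_j(m+n\mu_j),
 \qquad
 \widehat{nv}(k_j)=n\,a_j\frac{\mu_j^2}{\lambda_j}.
\]

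For part~(i), with $\alpha=\tau$ the denominator is $\lambda_j$. I split into two cases.
\begin{itemize}
\item If $m\neq0$, then $|m+n\mu_j|\to|m|$ because $\mu_j\to0$. Hence the ratio has modulus $\asymp 2^{-j}\cdot 2^{4j}=2^{3j}\to\infty$ by \eqref{eq:small-divisor-asymptotics}.
\item If $m=0$ and $n\neq0$, the ratio is $n a_j\mu_j/\lambda_j$, whose modulus is $\asymp 2^{-j}2^{-2j}2^{4j}=2^{j}\to\infty$.
\end{itemize}
Either way \eqref{eq:fourier-ratio-zero} fails.

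For part~(ii), with $\alpha=\sigma$ the denominator is $\mu_j$. The ratio is $n a_j\mu_j/\lambda_j$, again of modulus $\asymp 2^{j}\to\infty$, which is the same contradiction.

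The only delicate point is the case $m=0$ in part~(i), where the leading coefficient of $h$ is itself damped by the small divisor $\mu_j$. The exponents in Lemma~\ref{lem:small-divisors} are what keep the ratio unbounded there: $|\lambda_j|\asymp 2^{-4j}$ is much smaller than $|\mu_j|\asymp2^{-2j}$, and this more than compensates for the damping. The rest of the argument is direct verification.
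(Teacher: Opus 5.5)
Your proposal is correct and is essentially the paper's proof. You compute the Fourier coefficients at $k_j$ and split part~(i) into the cases $m\neq0$ (ratio $\asymp 2^{3j}$) and $m=0$ (ratio $\asymp 2^{j}$). In part~(ii) you get the same quotient $n a_j\mu_j/\lambda_j$, which contradicts Lemma~\ref{lem:fourier-obstruction} in each case.
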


\begin{proof}
At frequency $k_j$,
\[
 \widehat{(mf+ng)}(k_j)=a_j(m+n\mu_j).
\]
If $m\neq0$, then $|m+n\mu_j|$ is bounded away from zero for all sufficiently
large $j$, and
\[
 \left|\frac{a_j(m+n\mu_j)}{\lambda_j}\right|
 \asymp2^{3j}\longrightarrow\infty.
\]
If $m=0$, then $n\neq0$ and
\[
 \left|\frac{na_j\mu_j}{\lambda_j}\right|
 \asymp2^j\longrightarrow\infty.
\]
Both conclusions contradict Lemma~\ref{lem:fourier-obstruction}.  This proves
(i).  For (ii), the corresponding quotient is
\[
 \frac{n\widehat v(k_j)}{\mu_j}
 =na_j\frac{\mu_j}{\lambda_j},
\]
whose modulus is comparable to $2^j$, again a contradiction.
\end{proof}

With the commuting skew products in place, we verify minimality and record
the common, noncanonical tower of joint isometric extensions.

\begin{proposition}\label{prop:example-minimality}
The systems $(X,T)$, $(Z,S_1)$, and $(X,S)$ are minimal.
\end{proposition}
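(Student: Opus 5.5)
For $(X,T)$ we apply Lemma~\ref{lem:parry} directly. The base rotation $t\mapsto t+\tau$ is minimal because $\tau$ is irrational (Lemma~\ref{lem:small-divisors}). Then $T$ is the skew product over this rotation with cocycle $c(t)=(f(t),g(t))$ in $K=\T^2$. A nontrivial character of $\T^2$ has the form $\chi(x,z)=\e(mx+nz)$ with $(m,n)\neq(0,0)$, and $\chi(c(t))=\e(mf(t)+ng(t))$. A coboundary equation $\e(mf+ng)=H(t+\tau)/H(t)$ is the special case $\zeta=1$ of \eqref{eq:T-general-obstruction}. Lemma~\ref{lem:cohomological-obstructions}(i) rules this out, so Parry's criterion gives minimality of $(X,T)$.

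For $(Z,S_1)$ the argument is the same. The rotation by $\sigma$ is minimal, $K=\T$, and a character is $\e(n\,\cdot)$ with $n\neq0$. The equation $\e(nv)=H(t+\sigma)/H(t)$ is excluded by Lemma~\ref{lem:cohomological-obstructions}(ii) with $\zeta=1$. Hence $(Z,S_1)$ is minimal.

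The delicate case is $(X,S)$, because $S$ is not a compact-group skew product over a rotation. Its $x$-coordinate is shifted by $z$, which is a fibre coordinate. Instead we view $S$ as a group extension of the minimal system $(Z,S_1)$ with fibre $K=\T$ and cocycle $c(t,z)=z$. Lemma~\ref{lem:parry} is stated for an arbitrary minimal base $R$, so it applies with $R=S_1$. It then suffices to show that for no $n\neq0$ is there $F\in C(Z,\T)$ with
\[
 \e(nz)=\frac{F(S_1(t,z))}{F(t,z)}=\frac{F(t+\sigma,z+v(t))}{F(t,z)} .
\]

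This is the main obstacle, and the plan is a Fourier expansion in the fibre variable $z$. Let $F$ be such a solution and fix a nonzero integer $n$. For each $\ell\in\Z$, set $F_\ell(t):=\int_\T F(t,z)\e(-\ell z)\,dz$. The equation gives $F(t+\sigma,z+v(t))=\e(nz)F(t,z)$. Taking the $z$-Fourier coefficient of index $\ell+n$ on both sides yields
\[
 F_{\ell+n}(t+\sigma)\,\e\bigl(-(\ell+n)v(t)\bigr)=F_\ell(t).
\]
Therefore $|F_{\ell+n}(t+\sigma)|=|F_\ell(t)|$. Since $(t\mapsto t+\sigma)$ is uniquely ergodic, $\|F_\ell\|_{L^2(\T)}$ is constant along each residue class of $\ell$ modulo $n$. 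By Parseval, however, $\sum_\ell\|F_\ell\|_2^2=\|F\|_2^2=1$, because $|F|=1$. A constant sequence indexed by an infinite arithmetic progression can only be summable if it is zero, so every $F_\ell$ vanishes. Hence $F=0$, which contradicts $|F|=1$. We conclude that no such $F$ exists, and $(X,S)$ is minimal.

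This is the usual Anzai-type argument. If it needs to be made more careful, an alternative is to restrict to a single orbit, using continuity of $F$ and density. Note also that Lemma~\ref{lem:cohomological-obstructions}(ii) is needed only for the base $(Z,S_1)$: the shear cocycle $z$ alone already prevents cohomology to a constant at the top level.
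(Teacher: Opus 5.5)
Your proof is correct. For $(X,T)$ and $(Z,S_1)$ it is the paper's argument: Parry's criterion together with Lemma~\ref{lem:cohomological-obstructions} in the case $\zeta=1$. For $(X,S)$ you also set up the same Parry problem over the minimal base $(Z,S_1)$ with cocycle $(t,z)\mapsto z$. The only difference is how you rule out the equation $\e(nz)=F(t+\sigma,z+v(t))/F(t,z)$.

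The paper uses a winding-number argument. By continuity, $\deg_z F(t,\cdot)$ does not depend on $t$, so the right-hand side has degree $0$ in $z$, while the left-hand side has degree $n\neq0$.

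Your Fourier--Parseval argument also works, with two small corrections:
\begin{itemize}
\item The coefficient identity is actually $\e\bigl((\ell+n)v(t)\bigr)F_{\ell+n}(t+\sigma)=F_\ell(t)$. The sign slip is harmless, since you only use moduli.
\item Equality of the $L^2$ norms needs only rotation invariance of Lebesgue measure, not unique ergodicity.
\end{itemize}

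The degree argument is a one-liner, but it depends essentially on continuity of $F$. Yours uses only that $F$ is measurable with $|F|=1$ almost everywhere. So it also excludes measurable solutions, i.e.\ it proves the measure-theoretic (Anzai-type) form of the obstruction, at the cost of a slightly longer computation.
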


\begin{proof}
View $T$ as the $\T^2$-extension of the irrational rotation
$R_\tau:t\mapsto t+\tau$ with cocycle $(f,g)$.  A nontrivial character of
$\T^2$ has the form $(x,z)\mapsto\e(mx+nz)$ with $(m,n)\neq(0,0)$.
Lemma~\ref{lem:cohomological-obstructions}(i), in particular its case
$\zeta=1$, excludes every obstruction in Lemma~\ref{lem:parry}.  Hence $T$ is
minimal.

Similarly, $S_1$ is the circle extension of the irrational rotation
$R_\sigma$ with cocycle $v$.  Lemma~\ref{lem:cohomological-obstructions}(ii)
and Lemma~\ref{lem:parry} show that $S_1$ is minimal.

Finally, regard $S$ as the circle extension of $S_1$ with fibre coordinate
$x$ and cocycle $(t,z)\mapsto z$.  If $S$ were not minimal,
Lemma~\ref{lem:parry} would give $p\in\Z\setminus\{0\}$ and
$H\in C(Z,\T)$ such that
\begin{equation}\label{eq:S-second-obstruction}
 \e(pz)=\frac{H(t+\sigma,z+v(t))}{H(t,z)}.
\end{equation}
The integer $\deg_zH(t,\cdot)$ is independent of $t$.  The right-hand side of
\eqref{eq:S-second-obstruction} has degree zero in $z$, whereas the left-hand
side has degree $p\neq0$, a contradiction.
\end{proof}

\begin{proposition}
\label{prop:joint-isometric-tower}
Let $\Gamma=\langle T,S\rangle$.  The tower
\[
 \{*\}\longleftarrow Y
 \xleftarrow{\pi_0}Z
 \xleftarrow{\rho}X
\]
is a tower of $\Gamma$-isometric extensions.  Consequently, the joint
$\Gamma$-system is minimal distal, and both cyclic systems are minimal distal
and finite-step strictly PI.
\end{proposition}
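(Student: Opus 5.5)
We verify directly that each map in the tower carries an invariant relative metric for the whole group $\Gamma=\langle T,S\rangle$, and then read off distality from Theorem~\ref{thm:joint-distal}, or simply from the standard fact that isometric extensions of distal systems are distal.

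For the first stage $Y\to\{*\}$, both $T$ and $S$ act on $Y=\T$ by rotations ($t\mapsto t+\tau$ and $t\mapsto t+\sigma$). Hence the usual rotation-invariant metric on $\T$ is $\Gamma$-invariant, and $Y$ is a $\Gamma$-equicontinuous, i.e. isometric, extension of the point.

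For $\pi_0:Z\to Y$, we take on each fibre $\{t\}\times\T$ the relative metric
\[
 \rho_Z((t,z),(t,z'))=\|z-z'\|_{\T}.
\]
Both $T_1$ and $S_1$ act on a fibre by the translation $z\mapsto z+g(t)$, respectively $z\mapsto z+v(t)$, and they carry fibres to fibres. Translations preserve $\|\cdot\|_\T$, so $\rho_Z$ is a continuous $\Gamma$-invariant relative metric.

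For $\rho:X\to Z$, the fibre over $(t,z)$ is $\{(t,x,z):x\in\T\}$. The map $T$ sends it to the fibre over $T_1(t,z)$ by $x\mapsto x+f(t)$, and $S$ sends it to the fibre over $S_1(t,z)$ by $x\mapsto x+z$. Both are translations of the $x$-coordinate, so
\[
 \rho_X((t,x,z),(t,x',z))=\|x-x'\|_\T
\]
is a continuous $\Gamma$-invariant relative metric. Thus each successor map in the tower is $\Gamma$-isometric, and a fortiori $\langle T\rangle$- and $\langle S\rangle$-isometric.

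For the consequences, minimality of $(X,T)$ and $(X,S)$ is Proposition~\ref{prop:example-minimality}, and $(X,\Gamma)$ is minimal because it contains $T$. Distality follows by induction up the tower. Let $(w,w')$ be a proximal pair for $\Gamma$, or for one of the cyclic groups. Its projection is proximal in the base, which is distal by the induction hypothesis, so the two points lie in the same fibre. Along a sequence $\ell_n$ realizing proximality, the invariant relative metric gives $\rho(w,w')=\rho(\ell_nw,\ell_nw')\to0$ by Lemma~\ref{lem:fibre-metric}. Hence $w=w'$, as in the proof of Lemma~\ref{lem:mixed-isometric}.

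The minimal cyclic systems $(X,T)$ and $(X,S)$ are therefore built from the point by three isometric extensions. This makes them finite-step strictly PI (indeed finite-step distal towers), although not necessarily their canonical Furstenberg towers.

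No step is a serious obstacle. The only point requiring care is that an isometric extension in the paper's sense needs minimal systems at each stage. So we must also know that the intermediate systems $(Z,T_1)$, $(Z,S_1)$ and $(Y,\cdot)$ are minimal. For $Z$ and $Y$ this follows because they are factors of the minimal systems $(X,T)$ and $(X,S)$.
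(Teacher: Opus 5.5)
Your proposal is correct and follows the paper's proof: you check that the standard circle metrics on the fibres of $\pi_0$ and $\rho$ are invariant under both generators because points in a common fibre receive the same translation, and then get distality from the fact that isometric extensions of distal flows are distal. The only difference is that you write out the proximality induction via Lemma~\ref{lem:fibre-metric} and check minimality of the intermediate stages, both of which the paper leaves implicit.
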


\begin{proof}
The action on $Y$ is generated by the rotations $t\mapsto t+\tau$ and
$t\mapsto t+\sigma$ and is therefore equicontinuous.  On each fibre of
$\pi_0$, the standard circle metric
$
 d_\T(z,z')
$
is invariant under both $T_1$ and $S_1$, because the two points receive the
same cocycle increment.  On each fibre of $\rho$, the standard metric
$d_\T(x,x')$ is invariant under both $T$ and $S$ for the same reason.  Thus
both extensions are $\Gamma$-isometric.  The joint action is minimal because
it contains the minimal $T$-action, and an iterated isometric extension of a
distal flow is distal.  Restricting the action gives the assertions for $T$
and $S$.
\end{proof}

\subsection{The maximal equicontinuous factor and the shear}

We next identify the first canonical stage.  This separates the common
maximal equicontinuous factor from the later discrepancy in the isometric
successor.

\begin{lemma}\label{lem:T-eigenfunctions}
Every continuous eigenfunction of $(X,T)$ factors through
$\pi(t,x,z)=t$.
\end{lemma}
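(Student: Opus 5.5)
Let $F\in C(X,\mathbb C)$ be a continuous eigenfunction, $F\circ T=\beta F$ with $|\beta|=1$. Since $(X,T)$ is minimal (Proposition~\ref{prop:example-minimality}), $|F|$ is constant, so we normalize $|F|=1$. The plan is to expand $F$ in Fourier series along the fibre torus $\T^2$ with coordinates $(x,z)$:
\[
 F(t,x,z)=\sum_{(m,n)\in\Z^2}F_{m,n}(t)\,\e(mx+nz),
 \qquad
 F_{m,n}(t)=\int_{\T^2}F(t,x,z)\e(-mx-nz)\,dx\,dz .
\]
Each $F_{m,n}$ is continuous on $\T$. Substituting $T(t,x,z)=(t+\tau,x+f(t),z+g(t))$ into the eigenvalue equation and comparing fibre Fourier coefficients gives, for every $(m,n)$,
\[
 F_{m,n}(t+\tau)\,\e\bigl(mf(t)+ng(t)\bigr)=\beta F_{m,n}(t).
\]
The goal is to show $F_{m,n}\equiv0$ for $(m,n)\neq(0,0)$. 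Then $F=F_{0,0}(t)$ factors through $\pi$.

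Fix $(m,n)\neq(0,0)$. Taking absolute values, $|F_{m,n}|$ is invariant under the irrational rotation $R_\tau$ and continuous, hence constant. If this constant is nonzero, put $H:=F_{m,n}/|F_{m,n}|\in C(\T,\T)$. The relation rearranges to
\[
 \e\bigl(mf(t)+ng(t)\bigr)=\beta\,\frac{H(t)}{H(t+\tau)}.
\]
Replacing $H$ by $\overline H$ puts this in exactly the form \eqref{eq:T-general-obstruction}, with $\zeta=\beta$. Lemma~\ref{lem:cohomological-obstructions}(i) forbids this, so $F_{m,n}\equiv0$. Hence all nonzero fibre modes vanish, and by uniqueness of Fourier coefficients (in $L^2$ of each fibre, which suffices since $F$ is continuous) $F(t,x,z)=F_{0,0}(t)$.

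There is no real obstacle, since the key cohomological input is Lemma~\ref{lem:cohomological-obstructions}(i), which was stated precisely with an arbitrary constant $\zeta$ so as to cover eigenvalues. The only points to check are the following:
\begin{itemize}
\item The coefficient computation: the change of variables $x\mapsto x+f(t)$, $z\mapsto z+g(t)$ in the fibre integral produces the factor $\e(mf+ng)$ with the correct sign.
\item The passage from $|F_{m,n}|$ constant to a circle-valued $H$ is legitimate.
\end{itemize}
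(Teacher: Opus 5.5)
Your proof is correct, and it reaches the same cohomological equation as the paper by a different route.

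The paper does not expand $F$ along the fibres. Instead it uses the vertical translations $V_{u,w}(t,x,z)=(t,x+u,z+w)$, which commute with $T$, together with the one-dimensionality of eigenspaces of the minimal system $(X,T)$. This gives $F\circ V_{u,w}=c(u,w)F$ for a continuous character $c$ of $\T^2$, which forces $F(t,x,z)=H(t)\e(mx+nz)$ for a single pair $(m,n)$. Substituting into the eigenvalue equation makes $\e(mf+ng)$ cohomologous to a constant over $R_\tau$, and Lemma~\ref{lem:cohomological-obstructions}(i) excludes this unless $(m,n)=(0,0)$.

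Your fibrewise Fourier decomposition instead treats each mode separately. Each $F_{m,n}$ satisfies the twisted equation on its own and is killed by the same lemma.

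What you gain is that you never need one-dimensional eigenspaces or minimality of $T$ itself. You only need minimality of the base rotation $R_\tau$, to make $|F_{m,n}|$ constant, and $|\beta|=1$, which holds for any nonzero continuous eigenfunction of a homeomorphism. What the paper gains is a shorter argument that exhibits the eigenfunction directly as a single fibre mode, using the $\T^2$-symmetry of the group extension rather than harmonic analysis.

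The two points you flagged are fine. The change of variables gives $F_{m,n}(t+\tau)\e\bigl(mf(t)+ng(t)\bigr)=\beta F_{m,n}(t)$ with the sign you wrote. Dividing by the nonzero constant $|F_{m,n}|$ and then conjugating yields a continuous $H\colon\T\to\T$ satisfying \eqref{eq:T-general-obstruction} with $\zeta=\beta$.
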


\begin{proof}
For $(u,w)\in\T^2$, let
\[
 V_{u,w}(t,x,z)=(t,x+u,z+w).
\]
These vertical translations commute with $T$.  Let $F$ be a nonzero
continuous $T$-eigenfunction.  Minimality makes $|F|$ constant, so after
normalization $F:X\to\T$.  For every $(u,w)$, the function $F\circ V_{u,w}$
is a $T$-eigenfunction with the same eigenvalue.  The corresponding
eigenspace is one-dimensional, since the quotient of two such eigenfunctions
is a continuous $T$-invariant function.  Therefore
\[
 F\circ V_{u,w}=c(u,w)F
\]
for a continuous character $c:\T^2\to\T$.  Hence there are $m,n\in\Z$ and
$H\in C(\T,\T)$ such that
\begin{equation}\label{eq:eigenfunction-form}
 F(t,x,z)=H(t)\e(mx+nz).
\end{equation}
Substitution into the eigenvalue equation shows that $\e(mf+ng)$ is
continuously cohomologous to a constant over $R_\tau$.
Lemma~\ref{lem:cohomological-obstructions}(i) forces $(m,n)=(0,0)$, and $F$
depends only on $t$.
\end{proof}

\begin{proposition}\label{prop:common-mef}
The map $\pi:X\to Y$ is the maximal equicontinuous factor map for the
$T$-, $S$-, and joint $\Gamma$-actions.
\end{proposition}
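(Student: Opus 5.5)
By Theorem~\ref{thm:canonical-factor-comparison}(i), the $T$-, $S$- and $\Gamma$-actions on $X$ have the same maximal equicontinuous factor as a quotient of $X$. It therefore suffices to identify $\MEF(X,T)$ with $\pi:X\to Y$.

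\emph{Step 1: $\pi$ is an equicontinuous factor.} The map $\pi$ intertwines $T$ with the rotation $R_\tau$ on $\T$, which is an isometry. It also intertwines $S$ with $R_\sigma$. Hence $Y$ is an equicontinuous factor for all three actions, and $Y\preceq\MEF(X,T)$.

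\emph{Step 2: nothing larger is equicontinuous.} Write $\MEF(X,T)=(W,R)$. It is a minimal equicontinuous metrizable system, so $R$ is a minimal rotation on a compact monothetic abelian group $W$. The characters of $W$ separate points. Each character $\chi$ satisfies $\chi\circ R=\chi(w_0)\chi$, where $w_0$ is the rotation element. Composing with the factor map $X\to W$ therefore produces a continuous $T$-eigenfunction on $X$. By Lemma~\ref{lem:T-eigenfunctions}, every such eigenfunction factors through $\pi$. Consequently the factor map $X\to W$ is constant on the fibres of $\pi$, since otherwise some character would separate the images of two points in a common $\pi$-fibre. This gives $\MEF(X,T)\preceq Y$.

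Combining the two steps gives $\MEF(X,T)=Y$ via $\pi$, and Theorem~\ref{thm:canonical-factor-comparison}(i) transfers this to $S$ and $\Gamma$. I expect no step to be a real obstacle: the analytic content has already been absorbed into Lemma~\ref{lem:T-eigenfunctions}. The one point to state carefully is the standard fact that a minimal equicontinuous metrizable $\Z$-system is a group rotation whose continuous eigenfunctions separate points.

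Alternatively, one can avoid the appeal to part~(i) for $S$ directly: any $S$-equicontinuous factor is by part~(i) already an equicontinuous factor for $T$. So the $T$ computation alone suffices.
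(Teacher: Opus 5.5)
Your proof is correct and follows the paper's argument. You identify $\MEF(X,T)$ with $\pi$ using the equicontinuity of the rotation $R_\tau$ together with Lemma~\ref{lem:T-eigenfunctions}, then transfer the conclusion to $S$ and $\Gamma$ via Theorem~\ref{thm:canonical-factor-comparison}(i); your Step~2 simply spells out the standard character-separation argument (from eigenfunctions to the maximal equicontinuous factor) that the paper leaves implicit.
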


\begin{proof}
The factor $(Y,R_\tau)$ is equicontinuous for the $T$-action, while
Lemma~\ref{lem:T-eigenfunctions} shows that every continuous $T$-eigenfunction
factors through $\pi$. Hence $\pi$ is the maximal equicontinuous factor map
of $(X,T)$.

Part~\textup{(i)} of Theorem~\ref{thm:canonical-factor-comparison} shows
that commuting minimal actions and their joint action have the same maximal
equicontinuous factor as a quotient of $X$. Therefore $\pi$ is also the
maximal equicontinuous factor map for the $S$- and $\Gamma$-actions.
\end{proof}
Having fixed the common maximal equicontinuous factor, we determine the
relative equicontinuous structure over it.  The next lemma isolates the fibre
shear as the precise obstruction to making $\pi$ isometric for $S$.

\begin{lemma}\label{lem:shear}
Let
$
 R_\rho=\{((t,x,z),(t,x',z)):t,x,x',z\in\T\}.
$
Then
$
 R_\rho\subseteq Q_{\langle S\rangle}(\pi).
$
\end{lemma}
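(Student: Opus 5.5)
The plan is to verify the definition of $Q_{\langle S\rangle}(\pi)$ directly, by exhibiting explicit approximating sequences. The mechanism is the fibre shear $x\mapsto x+z$ in the definition of $S$. The $x$-coordinate of $S^n$ grows like $nz$, so a perturbation of size about $1/n$ in the $z$-coordinate is turned by $S^n$ into an arbitrary prescribed displacement in the $x$-coordinate.

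First I compute the iterates of $S$. Write the Birkhoff sums over $R_\sigma$ as
\[
 v^{(i)}(t):=\sum_{r=0}^{i-1}v(t+r\sigma),
 \qquad
 W_n(t):=\sum_{i=0}^{n-1}v^{(i)}(t).
\]
An induction on $n$ then gives
\[
 S^n(t,x,z)=\bigl(t+n\sigma,\ x+nz+W_n(t),\ z+v^{(n)}(t)\bigr).
\]
The only feature that matters is that the second coordinate depends on $z$ through the term $nz$ alone, with a remainder that depends only on $t$. The $z$-coordinate is a translate of $z$ by an amount depending only on $t$.

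Next I fix $((t,x,z),(t,x',z))\in R_\rho$. I choose a real lift $c$ of $x-x'$ and put $\delta_n:=c/n$. For the approximating sequences I take
\[
 u_n:=(t,x,z),
 \qquad
 u_n':=(t,x',z+\delta_n),
 \qquad
 \ell_n:=S^n .
\]
Clearly $\pi(u_n)=\pi(u_n')=t$, $u_n\to(t,x,z)$, and $u_n'\to(t,x',z)$ because $\delta_n\to0$. By the formula above, $S^nu_n$ and $S^nu_n'$ have the same first coordinate. Their second coordinates differ by $x'-x+n\delta_n\equiv0\pmod 1$. Their third coordinates differ by exactly $\delta_n$. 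Therefore $d(S^nu_n,S^nu_n')\le\delta_n\to0$ for the standard metric on $\T^3$, and so $((t,x,z),(t,x',z))\in Q_{\langle S\rangle}(\pi)$.

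I do not expect a serious obstacle. The one point requiring care is the bookkeeping in the iterate formula: one must confirm that the coefficient of $z$ in the $x$-coordinate of $S^n$ is exactly $n$, and that no other term depends on $z$. This holds because the increment of $z$ at each step is $v(t+r\sigma)$, which is independent of $z$. The shift $\delta_n$ therefore propagates unchanged through every step, and its accumulated contribution to the $x$-coordinate after $n$ steps is exactly $n\delta_n=c$.
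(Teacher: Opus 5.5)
Your proof is correct and follows essentially the same route as the paper, which also perturbs only the $z$-coordinate of the second point by $\delta_j=-a/n_j$ (your $c/n$ with $c=-a$) and uses that the $z$-difference is preserved while the shear accumulates $n\delta$ in the $x$-difference, so that at time $n$ the $x$-coordinates agree. Your explicit iterate formula just spells out the paper's one-line bookkeeping; the only nitpick is that the final bound should read $|\delta_n|$ rather than $\delta_n$, since $c$ may be negative.
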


\begin{proof}
Fix
$
 u=(t,x,z),
 u'=(t,x',z) \in R_\rho
$, and choose $a\in[-1/2,1/2]$ representing $x'-x\in\T$.  Let
$n_j\to\infty$ and put $\delta_j=-a/n_j$ modulo $1$.  Define
\[
 u_j=(t,x,z),
 \qquad
 u_j'=(t,x',z+\delta_j).
\]
Then $u_j\to u$, $u_j'\to u'$, and $\pi(u_j)=\pi(u_j')$.

For two points with the same $t$-coordinate, the difference of their
$z$-coordinates remains constant under iteration by $S$, while the shear
$x\mapsto x+z$ gives
\[
 \operatorname{pr}_x(S^ku_j')-
 \operatorname{pr}_x(S^ku_j)
 =a+k\delta_j
 \pmod1.
\]
At $k=n_j$ the $x$-coordinates agree and the $z$-difference equals
$\delta_j\to0$.  Hence
\[
 d(S^{n_j}u_j,S^{n_j}u_j')\longrightarrow0.
\]
Thus $(u,u')\in Q_{\langle S\rangle}(\pi)$.
\end{proof}

\begin{proposition}
\label{prop:exact-relative-factors}
One has
\[
 E_{\langle T\rangle}(\pi)=\Delta_X,
 \qquad
 E_{\langle S\rangle}(\pi)=E_\Gamma(\pi)=R_\rho.
\]
Consequently,
\[
 \mathsf I_{\langle T\rangle}(Y)=X,
 \qquad
 \mathsf I_{\langle S\rangle}(Y)
 =\mathsf I_\Gamma(Y)=Z.
\]
The canonical distal towers are therefore
\[
 \{*\}\longleftarrow Y\longleftarrow X
 \quad\text{for }T,
\]
and
\[
 \{*\}\longleftarrow Y\longleftarrow Z\longleftarrow X
 \quad\text{for }S\text{ and for }\Gamma.
\]
\end{proposition}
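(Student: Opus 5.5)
We will compute the three relative equicontinuous structure relations directly from the fibre geometry, using two facts. First, $E_L(\pi)$ is the kernel of $\mathsf I_L(Y)$ (Proposition~\ref{prop:maximal-isometric}). Second, an intermediate factor that is $L$-isometric over $Y$ has kernel containing $E_L(\pi)$.

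\emph{The $T$-action.} On each fibre $\pi^{-1}(t)\cong\T^2$, the map $T$ is the translation $(x,z)\mapsto(x+f(t),z+g(t))$ into the fibre over $t+\tau$. The flat metric
\[
 d_{\T^2}\bigl((x,z),(x',z')\bigr)
\]
on pairs with equal $t$-coordinate is therefore a continuous $T$-invariant relative metric on $R_\pi$. So $\pi$ itself is $T$-isometric, $\mathsf I_{\langle T\rangle}(Y)=X$, and $E_{\langle T\rangle}(\pi)=\Delta_X$.

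\emph{The $S$-action.} Lemma~\ref{lem:shear} gives $R_\rho\subseteq Q_{\langle S\rangle}(\pi)\subseteq E_{\langle S\rangle}(\pi)$. For the reverse inclusion, we note that $\pi_0:Z\to Y$ is $S_1$-isometric, since $S_1$ translates the $z$-fibre by $v(t)$ and preserves $d_\T(z,z')$. Thus $Z$ is an $S$-isometric intermediate factor of $\pi$, and maximality in Proposition~\ref{prop:maximal-isometric} gives $E_{\langle S\rangle}(\pi)\subseteq R_\rho$. This can also be seen directly: for two points in a common $\pi$-fibre, the $z$-difference is invariant under $S$. Hence any pair in $Q_{\langle S\rangle}(\pi)$ has $z$-difference equal to zero. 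Since $R_\rho$ is a closed $S$-invariant equivalence relation, it contains $E_{\langle S\rangle}(\pi)$. Therefore $E_{\langle S\rangle}(\pi)=R_\rho$ and $\mathsf I_{\langle S\rangle}(Y)=X/R_\rho=Z$.

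\emph{The joint action.} Part~(iii) of Theorem~\ref{thm:canonical-factor-comparison} gives
\[
 E_\Gamma(\pi)=E_{\langle T\rangle}(\pi)\vee E_{\langle S\rangle}(\pi)=\Delta_X\vee R_\rho=R_\rho,
\]
so $\mathsf I_\Gamma(Y)=Z$.

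\emph{The towers.} By Proposition~\ref{prop:common-mef}, the first nontrivial canonical stage for all three actions is $Y$. For $T$, the next maximal isometric successor is already $X$, which gives $\{*\}\leftarrow Y\leftarrow X$. For $S$ and for $\Gamma$, the successor is $Z$. This is a proper intermediate stage: $Z\neq X$ because $R_\rho\neq\Delta_X$, and $Z\neq Y$ because $R_\rho\neq R_\pi$. Over $Z$, the map $\rho$ is $\Gamma$-isometric by Proposition~\ref{prop:joint-isometric-tower}, and hence also $S$-isometric. So the maximal isometric successor of $Z$ is $X$, and the towers are $\{*\}\leftarrow Y\leftarrow Z\leftarrow X$.

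The only step requiring real content is the inclusion $R_\rho\subseteq E_{\langle S\rangle}(\pi)$, and it is supplied by the shear Lemma~\ref{lem:shear}. Everything else reduces to observing that fibre translations preserve flat metrics.
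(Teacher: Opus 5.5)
Your proposal is correct and follows the paper's proof essentially step for step. The flat fibre metric makes $\pi$ $T$-isometric, the $S$-isometric intermediate factor $Z$ together with Lemma~\ref{lem:shear} gives $E_{\langle S\rangle}(\pi)=R_\rho$, and part~(iii) of Theorem~\ref{thm:canonical-factor-comparison} handles $\Gamma$. Your extra direct check that pairs in $Q_{\langle S\rangle}(\pi)$ have equal $z$-coordinates, and your verification that $Z$ is a proper intermediate stage, are valid additions that the paper leaves implicit.
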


\begin{proof}
On every $\pi$-fibre, $T$ acts by the translation
\[
 (x,z)\longmapsto(x+f(t),z+g(t)).
\]
The standard product metric on $\T^2$ is therefore a continuous
$T$-invariant relative metric. Hence $\pi$ is $T$-isometric and
\[
 E_{\langle T\rangle}(\pi)=\Delta_X,
 \qquad
 \mathsf I_{\langle T\rangle}(Y)=X.
\]

The intermediate extension
$
 \pi_0:Z\to Y
$
is $S$-isometric by Proposition~\ref{prop:joint-isometric-tower}. Therefore
\[
 E_{\langle S\rangle}(\pi)\subseteq R_\rho.
\]
Lemma~\ref{lem:shear} gives
\[
 R_\rho
 \subseteq
 Q_{\langle S\rangle}(\pi)
 \subseteq
 E_{\langle S\rangle}(\pi).
\]
Consequently,
\[
 E_{\langle S\rangle}(\pi)=R_\rho,
 \qquad
 \mathsf I_{\langle S\rangle}(Y)=Z.
\]

Part~\textup{(iii)} of Theorem~\ref{thm:canonical-factor-comparison} now yields
\[
 \mathsf I_\Gamma(Y)
 =
 \mathsf I_{\langle T\rangle}(Y)
 \wedge_Y
 \mathsf I_{\langle S\rangle}(Y)
 =
 X\wedge_Y Z
 =
 Z.
\]
Equivalently,
\[
 E_\Gamma(\pi)
 =
 E_{\langle T\rangle}(\pi)
 \vee
 E_{\langle S\rangle}(\pi)
 =
 \Delta_X\vee R_\rho
 =
 R_\rho.
\]

Finally, Proposition~\ref{prop:common-mef} identifies $Y$ as the first
canonical stage, while $X\to Z$ is isometric for both $S$ and $\Gamma$.
Therefore the canonical distal towers are
\[
 \{*\}\longleftarrow Y\longleftarrow X
 \quad\text{for }T,
\]
and
\[
 \{*\}\longleftarrow Y\longleftarrow Z\longleftarrow X
 \quad\text{for }S\text{ and for }\Gamma.\qedhere
\]
\end{proof}

\begin{proof}[Proof of Theorem~\ref{thm:intro-counterexample}]
Commutation is Lemma~\ref{lem:commutation-example}; minimality and distality
are Propositions~\ref{prop:example-minimality} and
\ref{prop:joint-isometric-tower}; the common maximal equicontinuous factor is
Proposition~\ref{prop:common-mef}; and the exact canonical towers are computed
in Proposition~\ref{prop:exact-relative-factors}.
\end{proof}

\end{document}